\def\beq{\begin{equation}}
\def\eeq{\end{equation}}
\def\ba{\begin{array}}
\def\ea{\end{array}}
\numberwithin{equation}{section}
\newtheorem{theorem}{Theorem}[section]
\newtheorem{proposition}[theorem]{\textbf{Proposition}}
\newtheorem{lemma}[theorem]{Lemma}
\renewenvironment{proof}{\noindent{\textbf{Proof.}}}{\hfill$\Box$}
\theoremstyle{remark}
\newtheorem{remark}[theorem]{\textbf{Remark}}
\theoremstyle{plain}
\begin{document}
\title[HLS inequalities on compact CR manifold]{\textbf{Sharp Hardy-Littlewood-Sobolev inequlities on compact CR manifold}}
\author  {Yazhou Han}
%\author  {Yazhou Han and Meijun Zhu}
\address{Yazhou Han, Department of Mathematics, College of Science, China Jiliang University, Hangzhou, 310018, China}
\email{yazhou.han@gmail.com}

%\address{ Meijun Zhu, Department of Mathematics, The University of Oklahoma, Norman, OK 73019, USA} \email{mzhu@math.ou.edu}
\today

\date{}

% ----------------------------------------------------------------

\begin{abstract}
  Assume that $M$ is a CR compact manifold without boundary and CR Yamabe invariant $\mathcal{Y}(M)$ is positive. Here, we devote to study a class of sharp Hardy-Littlewood-Sobolev inequality as follows
  \begin{equation*}
    \Bigl| \int_M\int_M [G_\xi^\theta(\eta)]^{\frac{Q-\alpha}{Q-2}} f(\xi) g(\eta) dV_\theta(\xi) dV_\theta(\eta) \Bigr| \leq \mathcal{Y}_\alpha(M) \|f\|_{L^{\frac{2Q}{Q+\alpha}}(M)} \|g\|_{L^{\frac{2Q}{Q+\alpha}}(M)},
  \end{equation*}
  where $G_\xi^\theta(\eta)$ is the Green function of CR conformal Laplacian $\mathcal{L_\theta}=b_n\Delta_b+R$, $\mathcal{Y}_\alpha(M)$ is sharp constant, $\Delta_b$ is Sublaplacian and $R$ is Tanaka-Webster scalar curvature. For the diagonal case $f=g$, we prove that $\mathcal{Y}_\alpha(M)\geq \mathcal{Y}_\alpha(\mathbb{S}^{2n+1})$ (the unit complex sphere of $\mathbb{C}^{n+1}$) and $\mathcal{Y}_\alpha(M)$ can be attained if $\mathcal{Y}_\alpha(M)> \mathcal{Y}_\alpha(\mathbb{S}^{2n+1})$. Particular, if $\alpha=2$, the previous extremal problem is closely related to the CR Yamabe problem. Hence, we can study the CR Yamabe problem by integral equations.
\end{abstract}
\keywords{sharp Hardy-Littlewood-Sobolev inequalities, CR manifold, curvature equation with integral form}
\subjclass[2010]{45G05, 32V20}
\maketitle

%---------------------------------------------------------------------------------
\section{Introduction}

CR geometry, the abstract models of real hypersurfaces in complex manifolds, has attracted much attention in the past decades. Noticing that there is a far-reaching analogy between conformal and CR geometry, such as Model space, scalar curvature, Sublaplacian and Yamabe equation etc., many interesting and profound results on CR geometry were obtained, see \cite{CMY2017, Dragomir-Tomassini2006, Folland1973, Folland1975, Folland-Stein1974, Frank-Lieb2012, JL1983, JL1987, JL1988, JL1989, BFM2013, Cohn-Lu2004, Han-arxiv, Hormander1967, Lee1986, Lee1988, Li-Son-Wang2015, Li-Wang2015, Wang, Wang2012, Gamara2001, Gamara-Yacoub2001} %Garofalo-Lanconelli1992,
and the references therein. Inspired by the idea of \cite{Zhu2016, Han-Zhu2016, Dou-Zhu, Han-arxiv}, we want to study the curvature problem of CR geometry from the point of integral curvature equation. Following, involved notations can be found in the Section \ref{Sec Preliminary}.

%To study CR geometry, the common tools are CR structure and the Levi form. Moreover, the CR structure are invariant under the Levi form up to a conformal multiple. So, there are many analogies between conformal geometry and CR conformal geometry. The following table summarizes the most important paralles (see \cite{JL1987}):
%\begin{tabbing}
%{\hspace{0.1cm}}\= Riemannian manifold $(M,g)$ \={ }\quad\quad\= Pseudohermitian manifold $(M,\theta)$\\
%\> Euclidean space $\mathbb{R}^m$ \>\> Heisenberg group $\mathbb{H}^n$\\
%\> $m$-sphere $\mathbb{S}^m$ in $\mathbb{R}^{m+1}$ \>\> $2n+1$-sphere $\mathbb{S}^{2n+1}$ in $\mathbb{C}^{n+1}$\\
%\> Stereographic projection \>\> Cayley transform\\
%\> Riemannian normal coordinates \>\> Folland-Stein normal coordinates\\
%\> Scalar curvature $K$ \>\> Webster scalar curvature $R$\\
%\> Laplace-Beltrami operator $\Delta$ \>\> Sublaplacian $\Delta_b$ ($\text{Re}\Box_b$ on functions)\\
%\> confromal change $\tilde{g}=\phi^{q-2}g$\>\> Change of contact form $\tilde{\theta}=u^{p-2}\theta$\\
%\> Conformal invariant $\mu(M)$ \>\> CR invariant $\lambda(M)$\\
%\> Yamabe equation: $a_m\Delta\phi+K\phi=\mu\phi^{q-1}$ \>\> CR Yamabe equation: $b_n\Delta_b u+Ru=\lambda u^{p-1}$
%\end{tabbing}

\medskip

Let $(M,J,\theta)$ be a compact psedudohermitian manifold without boundary. Under the transformation $\tilde{\theta}=\phi^{\frac{4}{Q-2}}\theta$ with $\phi\in C^\infty(M)$ and $\phi>0$, Tanaka-Webster scalar curvatures $R$ and $\tilde{R}$, corresponding to $\theta$ and $\tilde{\theta}$ respectively, satisfy
\begin{equation}\label{CR Yamabe equation}
    b_n\Delta_b \phi+R\phi=\tilde{R}\phi^{\frac{Q+2}{Q-2}},
\end{equation}
where $\mathcal{L}_\theta= b_n\Delta_b+R$ is the CR conformal Laplacian related to $\theta$. %and $b_n=\frac{2Q}{Q-2}$.
For given constant curvature $\tilde{R}$, the existence of \eqref{CR Yamabe equation} is known as CR Yamabe problem, which was introduced by Jerison and Lee in \cite{JL1987}. There, they studied the CR Yamabe invariant
    $$\mathcal{Y}(M)=\inf\{A_\theta(\phi): B_\theta(\phi)=1\}$$
with
    $$A_\theta(\phi)=\int_M (b_n |d\phi|_\theta^2 +R \phi^2)\ dV_\theta, \quad B_\theta(\phi)= \int_M |\phi|^p\ dV_\theta, \quad dV_\theta=\theta \wedge d\theta^n,$$
and proved that $\mathcal{Y}(M) \leq \mathcal{Y}(\mathbb{S}^{2n+1})$ and the infimum can be attained if $\mathcal{Y}(M)<\mathcal{Y}(\mathbb{S}^{2n+1})$. It is well known that the case of $\mathcal{Y}(M)\leq 0$ is easy to deal. While for the positive case, it is complicated.

If $\mathcal{Y}(M)>0$, then the first eigenvalue $\lambda_1(\mathcal{L}_\theta)>0$ and then $\mathcal{L}_\theta$ is invertible. Furthermore, for any $\xi\in N$, there exists a Green function $G_\xi^\theta(\eta)$ of $\mathcal{L}_\theta$ such that the solution of \eqref{CR Yamabe equation} satisfies
\begin{equation}\label{CR Yamabe integral equ}
    \phi(\xi)=\int_M G_\xi^\theta(\eta) \tilde{R}(\eta) \phi(\eta)^{\frac{Q+2}{Q-2}}\ dV_\theta.
\end{equation}
So, scalar curvature $\tilde{R}$ can also be defined implicitly by the integral equation \eqref{CR Yamabe integral equ}.

Noting that $G_\xi^{\tilde{\theta}}(\eta) =\phi^{-1}(\xi)\phi^{-1}(\eta)G_\xi^\theta(\eta)$, we easily get
    $$\int_M G_\xi^{\tilde{\theta}}(\eta) \tilde{R}(\eta) u(\eta)\ dV_{\tilde{\theta}} =\phi^{-1} \int_M G_\xi^\theta(\eta) \tilde{R}(\eta) \phi^{\frac{Q+2}{Q-2}}(\eta) u(\eta)\ dV_\theta.$$
Moreover, we can study a class of more general CR conformal integral equation as in \cite{Zhu2016}. Specifically, define the operator
\begin{equation}\label{conformal operator}
    I_{M,\theta,\alpha}(u)=\int_M [G_\xi^\theta(\eta)]^{\frac{Q-\alpha}{Q-2}} u(\eta) \ dV_\theta \quad \text{with}\quad \alpha\neq Q,
\end{equation}
and we can prove that, under the transformation $\tilde{\theta}=\phi^{\frac 4{Q-2}}\theta$,
\begin{equation}\label{conformal operator property}
    I_{M,\tilde{\theta},\alpha}(u) =\phi^{-\frac{Q-\alpha}{Q-2}} I_{M,\theta,\alpha}(\phi^{\frac{Q+\alpha}{Q-2}}u).
\end{equation}
If take $u\equiv C$ and let $\varphi(\xi)=\phi^{\frac{Q+\alpha}{Q-2}}(\xi)$, then we have the following generalizing curvature equation, up to a constant multiplier,
\begin{equation}\label{curvature alpha 2}
    \varphi(\xi)^{\frac{Q+\alpha}{Q-\alpha}}=\int_M [G_\xi^\theta(\eta)]^{\frac{Q-\alpha}{Q-2}} \varphi(\eta)\ dV_\theta,\quad \alpha\neq Q.
\end{equation}
As pointed by Zhu in \cite{Zhu2016}, on $\mathbb{S}^n$ integral curvature equations are equivalent to the classical curvature quation if $\alpha$ is strictly less than dimension; while for the case $\alpha$ strictly greater than dimension, they are not equivalent and integral curvature equation has some advantages. So, it is interesting and valuable to study the integral curvature equation \eqref{curvature alpha 2}.

Moreover, if $G_\xi^\theta(\eta)=G_\eta^\theta(\xi)$, we note that \eqref{curvature alpha 2} is the Euler-Lagrange equation of the extremal problem
\begin{equation}\label{extremal problem}
    \mathcal{Y}_\alpha(M) = \sup_{f\in L^{(2Q)/(Q+\alpha)}(M)\backslash\{0\}} \frac{\Bigl| \int_M\int_M [G_\xi^\theta(\eta)]^{\frac{Q-\alpha}{Q-2}} f(\xi) f(\eta) dV_\theta(\xi) dV_\theta(\eta) \Bigr|}{\|f\|_{L^{(2Q)/(Q+\alpha)}(M)}^2},
\end{equation}
which is closely related to a class of Hardy-Littlewood-Sobolev inequalities with kernel $[G_\xi^\theta(\eta)]^{\frac{Q-\alpha}{Q-2}}$. Namely, for any $f,g\in L^{{2Q}/{Q+\alpha}}(M)$ with $0<\alpha<Q$, there exists some positive constant $C(\alpha, M)$ such that
\begin{equation}\label{roughly HLS diagonal}
    \Bigl| \int_M\int_M [G_\xi^\theta(\eta)]^{\frac{Q-\alpha}{Q-2}} f(\xi) g(\eta) dV_\theta(\xi) dV_\theta(\eta) \Bigr| \leq C(\alpha,M) \|f\|_{L^{\frac{2Q}{Q+\alpha}}(M)} \|g\|_{L^{\frac{2Q}{Q+\alpha}}(M)}.
\end{equation}
In fact, by the parametrix method, we know by \cite{Folland-Stein1974} that $(G^\theta_\xi(\eta))^{\frac{Q-\alpha}{Q-2}}\sim \rho(\xi,\eta)^{\alpha-Q}$ as $\rho(\xi,\eta)\rightarrow 0$. So, \eqref{roughly HLS diagonal} holds by a similar argument with Theorem 15.11 of \cite{Folland-Stein1974}.

In this paper, we will mainly devoted to study the extremal problem \eqref{extremal problem} by Hardy-Littlewood-Sobolev inequalities and will prove the following results.

\begin{theorem}\label{thm main}
Firstly, $\mathcal{Y}_\alpha(M) \geq \mathcal{Y}_\alpha(\mathbb{S}^{2n+1})$. Moreover, if the strict inequality holds, then $\mathcal{Y}_\alpha(M)$ is attained.
\end{theorem}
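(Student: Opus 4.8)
The plan is to follow the by-now standard concentration–compactness strategy for subcritical-threshold variational problems, adapted to the CR setting. First I would establish the lower bound $\mathcal{Y}_\alpha(M)\geq \mathcal{Y}_\alpha(\mathbb{S}^{2n+1})$ by a localization argument: take a near-optimal extremal $f$ for the sphere problem \eqref{extremal problem} on $\mathbb{S}^{2n+1}$, transplant it via CR normal coordinates into a small Heisenberg-type ball around an arbitrary point $\xi_0\in M$, and rescale it to concentrate. Using the asymptotic expansion $(G^\theta_\xi(\eta))^{\frac{Q-\alpha}{Q-2}}\sim \rho(\xi,\eta)^{\alpha-Q}$ as $\rho(\xi,\eta)\to 0$ (from the parametrix construction of \cite{Folland-Stein1974}) together with the fact that the sphere kernel is conformally equivalent to the Heisenberg-group kernel $|u^{-1}v|_{\mathbb{H}^n}^{\alpha-Q}$, the leading-order contribution to the numerator of \eqref{extremal problem} for the concentrating family reproduces exactly the Heisenberg/sphere sharp constant, while the error terms coming from the curvature corrections to the Green function are lower-order in the concentration parameter. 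Letting the parameter tend to zero gives $\mathcal{Y}_\alpha(M)\geq \mathcal{Y}_\alpha(\mathbb{S}^{2n+1})$.

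For the attainment statement, I would take a maximizing sequence $f_k$ for \eqref{extremal problem}, normalized by $\|f_k\|_{L^{2Q/(Q+\alpha)}(M)}=1$, and apply the concentration–compactness lemma to the sequence of measures $\mu_k=|f_k|^{2Q/(Q+\alpha)}\,dV_\theta$. Three alternatives arise: compactness, vanishing, or dichotomy. Vanishing is ruled out because it would force the numerator to tend to zero, contradicting $\mathcal{Y}_\alpha(M)>0$ (which follows from \eqref{roughly HLS diagonal}, i.e., positivity of the HLS constant). Dichotomy is ruled out by the strict subadditivity of the functional: splitting mass $\lambda$ and $1-\lambda$ would give a bound of the form $\mathcal{Y}_\alpha(M)\leq \mathcal{Y}_\alpha(M)(\lambda^{2/p'}+(1-\lambda)^{2/p'})$ with $p'=2Q/(Q+\alpha)$ and exponent $2/p'<1$, which for $0<\lambda<1$ is strictly less than $\mathcal{Y}_\alpha(M)$ unless $\lambda\in\{0,1\}$. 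This leaves concentration: either $f_k\rightharpoonup f\neq 0$ weakly with no loss of mass, in which case $f$ is an extremal by weak lower semicontinuity of the norm and the compactness of the bilinear form on the tight part; or the mass concentrates at a single point $\xi_0\in M$, and a blow-up (rescaling around $\xi_0$ in CR normal coordinates) produces in the limit an extremal for the Heisenberg-group HLS inequality, which forces $\mathcal{Y}_\alpha(M)=\mathcal{Y}_\alpha(\mathbb{S}^{2n+1})$, contradicting the strict inequality hypothesis. Hence the concentration-at-a-point scenario cannot occur, and the maximizer exists.

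The main obstacle I anticipate is controlling the blow-up limit precisely enough: one needs that when $\mu_k$ concentrates at $\xi_0$, the rescaled functionals converge — kernel and all — to the flat Heisenberg functional with the \emph{same} sharp constant $\mathcal{Y}_\alpha(\mathbb{S}^{2n+1})$, with no loss. This requires (i) uniform control of the Green function $G^\theta_\xi(\eta)$ near the diagonal, including the next-order terms in the parametrix expansion, so that the rescaled kernels converge locally uniformly away from the diagonal and are dominated near it; and (ii) a tightness/no-leakage argument showing that in the concentration regime essentially all of the bilinear energy comes from the shrinking neighborhood of $\xi_0$, so the limiting problem is genuinely the sphere's. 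Technically this is where the bulk of the work lies; the subadditivity and vanishing arguments are comparatively routine once \eqref{roughly HLS diagonal} and the positivity of $\mathcal{Y}_\alpha(M)$ are in hand. A secondary point to handle carefully is the reduction to nonnegative $f$ (so that the absolute value in \eqref{extremal problem} can be dropped), which follows since replacing $f$ by $|f|$ does not decrease the numerator, the kernel being positive.
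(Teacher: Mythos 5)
For the lower bound $\mathcal{Y}_\alpha(M)\geq\mathcal{Y}_\alpha(\mathbb{S}^{2n+1})$ your plan coincides with the paper's (Section \ref{Sec estimate}): transplant a truncated, concentrating HLS extremal (the paper uses the Heisenberg extremal $f_\epsilon$, conformally the same as your spherical one) through Folland--Stein normal coordinates and use $(G^\theta_\xi(\eta))^{\frac{Q-\alpha}{Q-2}}\sim\rho(\xi,\eta)^{\alpha-Q}$ near the diagonal. For attainment, however, you take a genuinely different route. The paper never works with a maximizing sequence at the critical exponent: it introduces the subcritical problems $D_{M,p}$, $\frac{2Q}{Q+\alpha}<p<2$, which are attained because the integral operator is \emph{compact} below the critical exponent (Proposition \ref{pro sub HLS}, Theorem \ref{thm extremal sub diagonal}); the extremals solve the integral Euler--Lagrange equation \eqref{extremal equ2 sub}, enjoy $\Gamma_\alpha$ regularity, $D_{M,p}\to\mathcal{Y}_\alpha(M)$ (Lemma \ref{lem limit sharp constant}), and the hypothesis $\mathcal{Y}_\alpha(M)>D_H$ enters through a pointwise blow-up argument giving uniform $\Gamma_\alpha$ bounds (Proposition \ref{pro limit bound}), after which one passes to the limit $p\to(\frac{2Q}{Q+\alpha})^+$. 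Your scheme instead runs Lions-type concentration--compactness directly at the critical exponent. That is viable in principle and avoids the subcritical regularization and the regularity theory, but the step you yourself flag as the ``main obstacle'' --- that concentration of $\mu_k$ at a point forces the energy to be bounded by the model sharp constant $D_H$ (local quantization with no leakage) --- is precisely the analytic core, of essentially the same difficulty as the paper's blow-up Proposition \ref{pro limit bound}; as written it is asserted, not proved, so your proposal is a credible alternative outline rather than a complete proof.

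Two concrete points to repair in your sketch. First, the exponent in the dichotomy step is backwards: with $p'=\frac{2Q}{Q+\alpha}$ one has $\frac{2}{p'}=\frac{Q+\alpha}{Q}>1$, and it is exactly because this exponent exceeds $1$ that $\lambda^{2/p'}+(1-\lambda)^{2/p'}<1$ for $0<\lambda<1$; with $2/p'<1$, as you wrote, the splitting bound would exceed $\mathcal{Y}_\alpha(M)$ and the argument would collapse. Second, on a compact $M$ the kernel $(G^\theta_\xi(\eta))^{\frac{Q-\alpha}{Q-2}}$ does not decay at large distances (it is bounded above and below off the diagonal), so the cross term between the two pieces in a dichotomy cannot be discarded by ``supports far apart''; it becomes negligible only because one piece lives on a region of shrinking volume, whence its $L^1$ mass tends to zero by H\"older. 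Relatedly, the ``vanishing'' alternative is vacuous on a compact manifold, and the reduction to $f\geq 0$ uses positivity of the Green function, which should be recorded as a consequence of $\mathcal{Y}(M)>0$.
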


Because of the hypoellipticity of operator $\mathcal{L}$ (in fact $\mathcal{L}$ satisfies the H\"{o}rmander condition \cite{Hormander1967}), we know that the Green function $G_\xi^\theta(\eta)$ is $C^\infty$ if $\xi\neq\eta$. Moreover, using CR normal coordinates at $\xi$ and the classical method of parametrix, we can construct the Green function as (without loss of generality, we take the coefficient of singular part as one)
    $$G_\xi(\eta)=\rho^{-2n}+w(\xi,\eta),$$
where $w$ is the regular part. Particular, if $M$ is locally CR conformal flat, then $w$ satisfies $\Delta_b w=0$ in some neighbourhood of $\xi$. Therefore, $w$ is $C^\infty$ in this neighbourhood because of the hypoellipticity of $\Delta_b$. If $n=1$, Cheng, Malchiodi and Yang \cite{CMY2017} proved that $w\in C^{1,\gamma}(M)$ for any $\gamma\in (0,1)$. {\bf In the sequel, we always assume that $w(\xi,\eta)\in C^1(M\times M)$.} Then, we can rewrite $G_\xi(\eta)$ as
\begin{equation}\label{Green}
    G_\xi(\eta)=\rho^{-2n}+A(\xi)+O(\rho)\quad\text{with}\quad A(\xi)=w(\xi,\xi).
\end{equation}
On the other hand, on a locally CR conformal flat manifold, we note that $\rho(\xi,\eta)=\rho(\eta,\xi)$ holds on some neighbourhood of diagonal of $M\times M$. So, we give a sufficient condition for the strict inequality of Theorem \ref{thm main}.
\begin{proposition}\label{pro A>0}
Assume that $M$ is a locally CR conformal flat manifold with $\mathcal{Y}(M)>0$. If there exists some point $\xi_0\in M$ such that $A(\xi_0) =w(\xi_0,\xi_0)>0$, then $\mathcal{Y}_\alpha(M)>\mathcal{Y}_\alpha(\mathbb{S}^{2n+1})$.
\end{proposition}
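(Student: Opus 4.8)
\medskip
\noindent\textbf{Proof plan.} The plan is to run the CR analogue of Aubin's test--function argument. Since $\mathcal{Y}_\alpha(M)$ is a \emph{supremum} (and is finite by \eqref{roughly HLS diagonal}), it suffices to exhibit a single nonzero $f\in L^{2Q/(Q+\alpha)}(M)$ whose Rayleigh quotient in \eqref{extremal problem} exceeds $\mathcal{Y}_\alpha(\mathbb{S}^{2n+1})$. I would take $f$ to be the extremal of the sharp Hardy--Littlewood--Sobolev inequality on $\mathbb{H}^n$ (equivalently, via the Cayley transform, on $\mathbb{S}^{2n+1}$, where its quotient equals precisely $\mathcal{Y}_\alpha(\mathbb{S}^{2n+1})$), rescaled to concentrate at $\xi_0$ and cut off; the gain over the sphere constant will come entirely from the regular part $A(\xi_0)>0$ of the Green function.

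First I would reduce to a flat gauge near $\xi_0$. By local CR conformal flatness there is a contact form conformal to $\theta$ that is flat Heisenberg on a ball $B\ni\xi_0$; by the conformal covariance \eqref{conformal operator property}, which makes $\mathcal{Y}_\alpha(M)$ a conformal invariant, I may assume $\theta$ itself has this property. Then on $B$ one has $R\equiv 0$, $\rho$ is the Kor\'anyi gauge, $dV_\theta$ is Haar measure, and by \eqref{Green} $G^\theta_\xi(\eta)=\rho(\xi,\eta)^{-(Q-2)}+w(\xi,\eta)$ with $w\in C^1(B\times B)$ and $w(\xi,\eta)=A(\xi_0)+O(\rho(\xi,\xi_0)+\rho(\eta,\xi_0))$; moreover $G^\theta>0$ because $\mathcal{Y}(M)>0$. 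Let $U>0$ be the Heisenberg extremal normalized by $\|U\|_{L^{2Q/(Q+\alpha)}}=1$, let $U_\epsilon$ be its rescaling at scale $\epsilon$ under the Heisenberg dilations, normalized so that $\|U_\epsilon\|_{L^{2Q/(Q+\alpha)}}=1$ for every $\epsilon>0$, and set $f_\epsilon=\chi U_\epsilon$ in these coordinates, where $\chi$ is a fixed cutoff, $\equiv 1$ near $\xi_0$ and supported in $B$.

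Next I would expand the kernel on $B\times B$,
\[
[G^\theta_\xi(\eta)]^{\frac{Q-\alpha}{Q-2}}=\rho(\xi,\eta)^{\alpha-Q}+\frac{Q-\alpha}{Q-2}\,w(\xi,\eta)\,\rho(\xi,\eta)^{\alpha-2}+O\bigl(\rho(\xi,\eta)^{\alpha+Q-4}\bigr),
\]
and insert it into the numerator of \eqref{extremal problem} at $f=f_\epsilon$. By scale invariance of the Heisenberg HLS functional the first term contributes $\iint\rho(u,v)^{\alpha-Q}U_\epsilon(u)U_\epsilon(v)\,du\,dv=\mathcal{Y}_\alpha(\mathbb{S}^{2n+1})$, with error $O(\epsilon^Q)$ from the cutoff and from the region $\rho(\xi,\eta)\gtrsim 1$. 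Replacing $w$ by $A(\xi_0)$ and rescaling, the second term equals $\tfrac{Q-\alpha}{Q-2}A(\xi_0)C_1\epsilon^{Q-2}$, where $C_1=\iint\rho(u,v)^{\alpha-2}U(u)U(v)\,du\,dv\in(0,\infty)$ (finite since $U(u)\sim\rho(u,0)^{-(Q+\alpha)}$ at infinity and $\alpha-2>-Q$); the oscillation $w-A(\xi_0)$ and the $O(\rho^{\alpha+Q-4})$ remainder contribute only $O(\epsilon^{Q-1})$. The denominator equals $\|f_\epsilon\|_{L^{2Q/(Q+\alpha)}}^2=1+O(\epsilon^Q)$. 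Hence the quotient of $f_\epsilon$ is $\mathcal{Y}_\alpha(\mathbb{S}^{2n+1})+c\,A(\xi_0)\epsilon^{Q-2}+o(\epsilon^{Q-2})$ for a constant $c>0$; since $Q-2=2n>0$, this exceeds $\mathcal{Y}_\alpha(\mathbb{S}^{2n+1})$ for $\epsilon$ small, and therefore so does $\mathcal{Y}_\alpha(M)$.

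The main obstacle is the error bookkeeping in the third step: one must verify that every error --- truncation of the bubble, the far--diagonal contribution, the oscillation of $w$, and the quadratic binomial remainder --- is genuinely $o(\epsilon^{Q-2})$. This is exactly why the reduction to a gauge in which $\rho$ and $dV_\theta$ are \emph{exactly} the flat Heisenberg ones is essential: an $O(\rho^k)$ distortion of the volume element or of the gauge would corrupt the leading term at order $\epsilon^k$, and $k$ need not beat $Q-2$ in general. A secondary point to pin down is that the hypothesis $A(\xi_0)>0$ survives the conformal change used in the reduction --- i.e.\ that in CR normal coordinates the regular part $A$ transforms at a point by a positive factor, up to terms that vanish there.
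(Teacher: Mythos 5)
Your overall strategy is the same as the paper's: concentrate a cut-off Heisenberg HLS extremal at $\xi_0$ and extract a strict gain of order $\epsilon^{Q-2}$ from the positive constant term in the expansion of the Green function. The paper packages exactly this as the pointwise lower bound $(G^\theta_\xi(\eta))^{\frac{Q-\alpha}{Q-2}}\ge\rho(\xi,\eta)^{\alpha-Q}+C\rho(\xi,\eta)^{\alpha-2}$ near $(\xi_0,\xi_0)$ (valid since $w\in C^1$ and $A>A_0>0$ nearby) followed by the test-function argument of Proposition 2.9 of \cite{Han-Zhu2016}, whereas you carry out the bubble computation explicitly after first conformally flattening the structure near $\xi_0$, using the conformal invariance of the quotient \eqref{extremal problem}. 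The error bookkeeping you sketch in the flat gauge is essentially right (the binomial remainder $O(\rho^{\alpha+Q-4})$ needs a truncated tail estimate, since the rescaled integral diverges for $Q\ge 4$, but it is indeed $o(\epsilon^{Q-2})$).

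The step you flag as ``secondary'' is, however, the one genuine gap, and it is not a remark but a claim requiring proof. The hypothesis $A(\xi_0)=w(\xi_0,\xi_0)>0$ is formulated in the given gauge $\theta$, with the singularity subtracted in the Folland--Stein gauge $\rho_\theta$, while after your reduction the coefficient multiplying $\epsilon^{Q-2}$ is the regular part $\tilde w(\xi_0,\xi_0)$ of $G^{\tilde\theta}$ relative to the exact Kor\'anyi gauge of the flat coordinates $u(\eta)$. Since $G^{\tilde\theta}_\xi(\eta)=\phi(\xi)^{-1}\phi(\eta)^{-1}G^\theta_\xi(\eta)$ while the two subtracted singular terms $\rho_\theta(\xi_0,\eta)^{2-Q}$ and $\phi(\xi_0)\phi(\eta)|u(\eta)|^{2-Q}$ agree only to leading order, the two constant terms differ by the additive quantity $\lim_{\eta\to\xi_0}\bigl[\phi(\xi_0)\phi(\eta)|u(\eta)|^{2-Q}-\rho_\theta(\xi_0,\eta)^{2-Q}\bigr]$, which is finite under the standing $C^1$ assumption on $w$ but has no sign in general; so ``$A$ transforms at the point by a positive factor'' is precisely what must be established, e.g.\ by a suitable normalization of the flattening factor and of the normal coordinates (this is essentially the well-definedness of the CR mass, cf.\ \cite{CMY2017}). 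The alternative is to do what the paper does and stay in the original gauge, where the hypothesis applies verbatim through the lower bound $(G^\theta_\xi(\eta))^{\frac{Q-\alpha}{Q-2}}\ge\rho^{\alpha-Q}+C\rho^{\alpha-2}$ --- but then the $(1+O^1)$ distortions of the volume element and of the gauge, which you yourself observe can perturb the quotient at order $\epsilon$, i.e.\ far above the gain $\epsilon^{Q-2}=\epsilon^{2n}$, must be controlled; that bookkeeping, exploiting the local CR conformal flatness (symmetric $\rho$, harmonic smooth $w$), is exactly what the paper delegates to Proposition 2.9 of \cite{Han-Zhu2016}. Either route closes your argument, but one of them has to be carried out.
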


The paper is organized as follows. In Section \ref{Sec Preliminary}, we introduce the definition of CR manifold, some notations and some known results. Section \ref{Sec estimate} is mainly devoted to the first part of Theorem \ref{thm main}, namely, the estimation $\mathcal{Y}_\alpha(M) \geq \mathcal{Y}_\alpha(\mathbb{S}^{2n+1})$. For the discussion of the second part of Theorem \ref{thm main}, we will adopt the blowup analysis. So, we will study the subcritical case of Hardy-Littlewood-Sobolev inequality in the Section \ref{Sec sub}. Then, in Section \ref{Sec existence of criteria}, we complete the proof of Theorem \ref{thm main} and discuss the condition of strict inequality, namely Proposition \ref{pro A>0}. For completeness, we study the CR comformality of operator \eqref{conformal operator} in the Appendix \ref{Sec CR conformality}.

\section{Preliminary}\label{Sec Preliminary}

In this section, we will introduce some notations and some known facts. The details can be seen in \cite{Folland-Stein1974, Dragomir-Tomassini2006, Frank-Lieb2012, JL1983, JL1987, JL1988, JL1989, Lee1986, Lee1988, BFM2013, Li-Son-Wang2015, Li-Wang2015, Wang, Wang2012} and references therein.

\subsection{CR manifold and CR Yamabe problem}
A {\it CR manifold} is a real oriented $C^\infty$ manifold $M$ of dimension $2n+1,\ n=1,2,\cdots$, together with a subbundle $T_{1,0}$ of the complex tangent bundle $\mathbb{C}TM$ satisfying:\par
(a) $\dim_\mathbb{C} T_{1,0}=n,$\par
(b) $T_{1,0}\cap T_{0,1}=\{0\}$ with $T_{0,1}=\overline{T}_{1,0}$,\par
(c) $T_{1,0}$ satisfies the formal Frobenius condition $[T_{1,0},T_{1,0}]\subset T_{1,0}$.\par
\noindent Denote by $Q=2n+2$ the homogeneous dimension.

An \textit{almost CR structure} on $M$ is a pair $(H(M),J)$, where $H(M)=\rm{Re}(T_{1,0}+T_{0,1})$ is a subbundle of rank $2n$ of $T(M)$ and $J:\ H(M)\rightarrow H(M)$, given by $J(V+\overline{V})=\sqrt{-1}(V-\overline{V})$ for $V\in T_{1,0}$,  is an almost complex structre on $H(M)$.

Since $M$ is orientable, then $H(M)$ is oriented by its complex structure. So, there always exists a global nonvanishing $1$-form $\theta$ which annihilates exactly $H(M)$ and for which there exists a natural volume form $dV_\theta=\theta\wedge d\theta^n$. Any such $\theta$ is called a \textit{pseudo-hermitian structure} $M$.

Associated with each $\theta$, {\it Levi form $L_\theta$} is defined on $H(M)$ as
    $$L_\theta(V,W)=\langle d\theta,V\wedge JW\rangle =d\theta(V,JW),\quad V,W\in G.$$
%where we adopt the definition of $L_\theta$ from Jerison and Lee\cite{JL1987}.
By complex linearity, we can extend $L_\theta$ to $CH(M)$ and induce a hermitian form on $T_{1,0}$ as
    $$L_\theta(V,\overline{W}) =-\sqrt{-1}\langle d\theta,V\wedge\overline{W}\rangle =-\sqrt{-1} d\theta(V,\overline{W}) , \quad V,W\in T_{1,0}.$$
It is easy to see that Levi form is CR invariant. Namely, if $\theta$ is replaced by $\tilde\theta=f\theta$, $L_\theta$ changes conformally by $L_{\tilde\theta}=fL_\theta$. We say $M$ is {\it nondegenerate} if the Levi form is nondegenerate at every point, and say $M$ is {\it strictly pseudoconvex} if the form is positive definite everywhere. In this paper, we always assume that $M$ is strictly pseudoconvex.

Based on the Levi form $L_\theta$, we can take a local unitary frame $\{T_\alpha:\ \alpha=1,\cdots,n\}$ for $T_{1,0}(M)$. Then, there is a natural seconde order differential operator, namely the \textit{Sublaplacian} $\Delta_b$, which is defined on the function $u$ as
\begin{equation}\label{sub-Laplacian}
    \Delta_b u=-(u_{\bar{\alpha},\alpha}+u_{\alpha,\bar{\alpha}}).
\end{equation}
Under the transformation $\tilde\theta=\phi^{\frac 4{Q-2}}\theta$ with $\phi\in C^\infty(M)$ and $\phi>0$, the CR conformal Laplacian $\mathcal{L}_\theta= b_n\Delta_b+R$ satisfies
\begin{equation}\label{conformal Laplacian}
    \mathcal{L}_{\tilde{\theta}}(\phi^{-1}u) =\phi^{-\frac{Q+2}{Q-2}}\mathcal{L}_\theta(u),
\end{equation}
where $R$ is the Tanaka-Webster scalar curvatures and $b_n=\frac{2Q}{Q-2}$. Take $u=\phi$, we have the prescribed curvature equation \eqref{CR Yamabe equation}. Furthermore, for given constant curvature $\tilde{R}$, the existence of \eqref{CR Yamabe equation} is known as \textit{CR Yamabe problem}, which was introduced by Jerison and Lee, see \cite{JL1987, JL1983}.

\subsection{Heisenberg group $\mathbb{H}^n$, complex sphere $\mathbb{S}^{2n+1}$ and the Cayley transform}\ %\newline
%
%Following, we introduce two concrete CR manifolds: Heisenberg group $\mathbb{H}^n$ and unit sphere $\mathbb{S}^{2n+1}\subset \mathbb{C}^{n+1}$.  play the similar roles as $\mathbb{R}^n$ and $\mathbb{S}^{n}\subset\mathbb{R}^{n+1}$ to Riemannian manifolds.
%\indent
\textit{Heisenberg group} $\mathbb{H}^n$ is $\mathbb{C}^n \times \mathbb{R}=\{u=(z,t):z=(z_1, \cdots, z_n)=(x_1+\sqrt{-1}y_1,\cdots,x_n+\sqrt{-1}y_n,t)\in \mathbb{C}^n,t\in  \mathbb{R}\}$
with the multiplication law
    $$(z,t)(z',t')=(z+z', t+t'+2Im(z \cdot \overline{z'})),$$
where $z \cdot \overline{z'}=\sum_{j=1}^n z_j \overline{z_j'}.$ A class of natural norm function is given by $|u|=(|z|^4+t^2)^{1/4}$ and the distance between points $u$ and $v$ is defined by $d(u,v)=|v^{-1}u|$. Take a set of basis $Z_j=\frac\partial{\partial z_j}+\sqrt{-1}\bar{z}_j\frac{\partial}{\partial t},\ j=1,2,\cdots,n$, which spans $T_{1,0}$ of $\mathbb{H}^n$. A natural $1$-form is
    $$\theta_0=dt+\sqrt{-1}\sum_{j=1}^n(z_j d\bar{z}_j-\bar{z}_j dz_j),$$
and the corresponding \textit{Levi form} can be specified as
    $$L_{\theta_0}(Z_j,\overline{Z}_k)=\delta_{jk},\quad j,k=1,2,\cdot,n.$$
The \textit{Sublaplacian} are defined with respect to $\{T_j,\ j=1,2,\cdots,n\}$ as
\begin{equation}\label{sub-Laplacian Hn}
    \triangle_H=-\sum_{j=1}^n(Z_j\overline{Z_j}+\overline{Z_j}Z_j).
\end{equation}

The \textit{sphere $\mathbb{S}^{2n+1}$} is $\{\xi=(\xi_1,\cdots,\xi_{n+1}:\ \xi\cdot\bar{\xi}=\sum_{j=1}^{n+1}|\xi_j|^2=1\}\subset \mathbb{C}^{n+1}$ and the subspace $T_{1,0}\subset\mathbb{C}T(\mathbb{S}^{2n+1})$ is spanned by
    $$T_j=\frac\partial{\partial \xi_j}-\overline{\xi_j}\mathcal{R},\quad \mathcal{R}=\sum_{k=1}^{n+1}\xi_k\frac\partial{\partial \xi_k}, \quad j=1,\cdots,n+1.$$
A natural $1$-form is
    $$\theta_S=\sqrt{-1}\sum_{j=1}^{n+1}(\xi_j d\overline{\xi_j}-\overline{\xi_j}d\xi_j)$$
and the \textit{Sublaplacian} is
\begin{equation}\label{sub-Laplacian S}
    \Delta_S=-\sum_{j=1}^{n+1}(T_j\overline{T_j}+\overline{T_j}T_j).
\end{equation}
On the sphere the distance function is defined as $d(\zeta,\eta)^2=2|1-\zeta\cdot\bar{\eta}|$.

\textit{Cayley transform} $\mathcal{C}:\ \mathbb{H}^n\rightarrow \mathbb{S}^{2n+1}\backslash (0,0,\cdots,0,-1)$ and its reverse are defined as
\begin{gather*}
    C(z,t)=\Bigl(\frac{2z}{1+|z|^2+it},\frac{1-|z|^2-it}{1+|z|^2+it}\Bigr),\\
    C^{-1}(\xi) =\Bigl( \frac{\xi_1}{1+\xi_{n+1}}, \cdots, \frac{\xi_n}{1+\xi_{n+1}}, \rm{Im}\frac{1-\xi_{n+1}}{1+\xi_{n+1}}\Bigr),
\end{gather*}
respectively. The Jacobian of the Cayley transform is
    $$J_\mathcal{C}{(z,t)}=\frac{2^{2n+1}}{((1+|z|^2)^2+t^2)^{n+1}}$$
which implies that
\begin{equation}\label{transform integral Hn S}
    \int_{\mathbb{H}^n} f dV_0 =2^{2n}n! \int_{\mathbb{H}^n} f du
    =\int_{\mathbb{S}^{2n+1}} F dV_S =2^{2n+1} n! \int_{\mathbb{S}^{2n+1}} F d\xi,
\end{equation}
where $f=(F\circ \mathcal{C})(2|J_\mathcal{C}|)$, $dV_0=\theta_0\wedge d\theta_0 \wedge\cdots\wedge d\theta_0 $, $dV_S=\theta_S \wedge d\theta_S \wedge\cdots\wedge d\theta_S$, $du=dzdt=dxdydt$ is the Haar measure on $\mathbb{H}^n$ and $d\xi$ is the Euclidean volume element of $\mathbb{S}^{2n+1}$. Moreover, through the Cayley transform, we have the following relations between two distance functions
\begin{equation}\label{formula relations distance}
    d(\zeta,\eta)=d(u,v)\left(\frac 4{(1+|z|^2)^2+t^2}\right)^{1/4}\left(\frac 4{(1+|z'|^2)^2+t'^2}\right)^{1/4},
\end{equation}
where $\zeta=\mathcal{C}(u),\ \eta=\mathcal{C}(v),\ u=(z,t)$ and $v=(z',t')$.

Adopting the above notations, we can rewrite the sharp Hardy-Littlewood-Sobolev inequalities on $\mathbb{H}^n$ and $\mathbb{S}^{2n+1}$ (see Frank and Lieb's result \cite{Frank-Lieb2012}) as
\begin{theorem}[Sharp HLS inequality on $\mathbb{H}^n$]\label{thm HLS Hn}
For $0<\alpha<Q$ and $p=\frac{2Q}{Q+\alpha}$. Then for any $f,g\in L^p(\mathbb{H}^n)$,
\begin{equation}\label{sharp HLS Hn}
    \left|\int_{\mathbb{H}^n} \int_{\mathbb{H}^n} \frac{\overline{f(u)} g(v)}{d(u,v)^{Q-\alpha}} dV_0(u) dV_0(v)\right|\le D_{H} ||f||_{L^{p}(\mathbb{H}^n,dV_0)}||g||_{L^{p}(\mathbb{H}^n,dV_0)}
\end{equation}
where
\begin{equation}\label{sharp Con Hn}
D_{H}:=(2\pi)^{\frac{Q-\alpha}2} \frac{n!\Gamma(\alpha/2)}{\Gamma^2((Q+\alpha)/4)} =\mathcal{Y}_\alpha(\mathbb{H}^n).
\end{equation}
And equality holds if and only if
\begin{equation}\label{extremal Hn}
f(u)=c_1g(u)=c_2H(\delta_r(a^{-1}u),
\end{equation}
for some $c_1, \ c_2\in\mathbb{C}$,\ $r>0$ and $a\in\mathbb{H}^n$ (unless $f\equiv 0$ or $g\equiv 0$). Here $H$ is defined as
\begin{equation}\label{extremal1 Hn}
    H(u)=H(z,t)=((1+|z|^2)^2+t^2)^{-(Q+\alpha)/4}.
\end{equation}
\end{theorem}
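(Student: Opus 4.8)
The plan is to reduce the inequality on $\mathbb{H}^n$ to the compact model $\mathbb{S}^{2n+1}$ through the Cayley transform, to prove existence of an extremizer there by a concentration argument, and then to pin down both the sharp constant and the extremizers by exploiting the conformal symmetry group of the sphere. This is the strategy of Frank and Lieb, and I would follow it.

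First I would transport everything to the sphere. Taking $f$ and $g$ to be the conformal pullbacks under $\mathcal{C}$ of functions $F,G$ on $\mathbb{S}^{2n+1}$, weighted by the power of the Jacobian dictated by the measure identity \eqref{transform integral Hn S}, the distance relation \eqref{formula relations distance} converts the kernel $d(u,v)^{\alpha-Q}$ into $d(\zeta,\eta)^{\alpha-Q}=(2|1-\zeta\cdot\bar\eta|)^{(\alpha-Q)/2}$ up to exactly the conformal weights produced in the two variables. Because $p=\frac{2Q}{Q+\alpha}$ is the conformally invariant exponent, these weights are absorbed by the change of measure and by the $L^p$ normalization, so the left-hand side of \eqref{sharp HLS Hn} equals $\bigl|\int_{\mathbb{S}^{2n+1}}\int_{\mathbb{S}^{2n+1}}\overline{F(\zeta)}\,G(\eta)\,(2|1-\zeta\cdot\bar\eta|)^{(\alpha-Q)/2}\,dV_S\,dV_S\bigr|$, while $\|f\|_{L^p(\mathbb{H}^n)}=\|F\|_{L^p(\mathbb{S}^{2n+1})}$ and likewise for $g$. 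It therefore suffices to prove the sharp inequality on $\mathbb{S}^{2n+1}$ with constant $D_H$ and to identify its extremizers as the constants; pulling a constant back through $\mathcal{C}$ produces exactly the bubble $H$ of \eqref{extremal1 Hn}, and the full family \eqref{extremal Hn} is swept out by the Heisenberg translations and dilations $\delta_r(a^{-1}\cdot)$, which correspond to conformal automorphisms of the sphere.

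Next I would establish existence of an extremizer on $\mathbb{S}^{2n+1}$. Since the exponent $p$ is conformally critical, a maximizing sequence may lose compactness by concentrating at a point, and I would control this by concentration-compactness. The functional is invariant under the noncompact conformal group of $\mathbb{S}^{2n+1}$ (the image under $\mathcal{C}$ of the Heisenberg translations and dilations), so a concentrating maximizing sequence can be renormalized by conformal maps that redistribute its mass; one then shows the renormalized sequence converges strongly to a genuine nonnegative maximizer $F$. Alternatively, and in the spirit of Section \ref{Sec sub}, existence can be obtained by maximizing subcritical functionals and letting the exponent tend to the critical value.

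The decisive step, and the main obstacle, is the characterization of extremizers, since the symmetric-decreasing rearrangement that drives the Euclidean HLS proof has no workable counterpart on the Heisenberg group or on $\mathbb{S}^{2n+1}$. Here I would use the rearrangement-free, conformal center-of-mass method. An extremizer satisfies the Euler-Lagrange equation $\int_{\mathbb{S}^{2n+1}}(2|1-\zeta\cdot\bar\eta|)^{(\alpha-Q)/2}\,F(\eta)\,dV_S(\eta)=\lambda\,F(\zeta)^{(Q-\alpha)/(Q+\alpha)}$. Using the conformal group I would normalize $F$ so that its first moments $\int_{\mathbb{S}^{2n+1}}\xi_j\,|F|^{p}\,dV_S$ all vanish. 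For such a balanced extremizer one tests the Euler-Lagrange equation against the degree-one CR spherical harmonics, namely the restrictions of the coordinate functions $\xi_j$, and invokes the sharp spectral value of the operator with kernel $(2|1-\zeta\cdot\bar\eta|)^{(\alpha-Q)/2}$ on this eigenspace, computed by a Funk-Hecke type formula. Comparing this eigenvalue against $\lambda$ forces the projection of $F$ onto every nonconstant mode to vanish, so $F$ is constant; undoing the normalization yields the extremizers \eqref{extremal Hn}. Finally, evaluating the spherical functional at $F\equiv1$ reduces the double integral to $\int_{\mathbb{S}^{2n+1}}(2|1-\zeta\cdot\bar\eta|)^{(\alpha-Q)/2}\,dV_S(\eta)$, which is independent of $\zeta$ by symmetry and is computed by a Beta/Gamma integral; dividing by $\|1\|_{L^p}^2$ and translating through \eqref{transform integral Hn S} produces the explicit constant $D_H$ in \eqref{sharp Con Hn}.
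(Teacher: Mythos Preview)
The paper does not prove this theorem at all: it is quoted verbatim as a known result of Frank and Lieb \cite{Frank-Lieb2012} and used as input throughout. So there is no ``paper's own proof'' to compare against; the relevant comparison is with the Frank--Lieb argument your sketch is reconstructing, and at that level your outline is essentially on target: Cayley transform to $\mathbb{S}^{2n+1}$, existence of an extremizer by compactness modulo the conformal group, characterization via a conformal center-of-mass normalization and a Funk--Hecke eigenvalue identity, and evaluation of the constant on $F\equiv 1$.

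One point of imprecision worth flagging in your characterization step: you describe the argument as testing the Euler--Lagrange equation against the degree-one harmonics and comparing eigenvalues to $\lambda$. In Frank--Lieb the decisive comparison comes from the \emph{second} variation at the maximizer, not from the first-order equation. After normalizing so that $\int_{\mathbb{S}^{2n+1}}\xi_j|F|^{p}\,dV_S=0$, one plugs the perturbations $\xi_j F$ into the second-variation inequality, sums over $j$ using $\sum_j|\xi_j|^2=1$, and invokes the Funk--Hecke value of the kernel on the coordinate functions. This produces an integral inequality whose equality case forces $F$ to be constant; it is not that individual nonconstant spectral projections are shown to vanish mode by mode. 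Apart from this, your plan matches the cited source that the paper itself relies on.
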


\begin{theorem}[Sharp HLS inequality on $\mathbb{S}^{2n+1}$]\label{thm HLS Sn}
For $0<\alpha<Q$ and $p=\frac{2Q}{Q+\alpha}$. Then for any $f,g\in L^p(\mathbb{S}^{2n+1})$,
\begin{equation}\label{sharp HLS Sn}
    \left|\int_{\mathbb{S}^{2n+1}} \int_{\mathbb{S}^{2n+1}} \frac{\overline{f(\zeta)} g(\eta)}{d(\zeta,\eta)^{(Q-\alpha)}} dV_s(\zeta) dV_S(\eta)\right|\le D_{S} \|f\|_{L^{p}(\mathbb{S}^{2n+1},dV_S)} \|g\|_{L^{p}(\mathbb{S}^{2n+1},dV_S)}
\end{equation}
where
\begin{equation}\label{sharp Con Sn}
    D_{S}:=D_H=\mathcal{Y}_\alpha(\mathbb{S}^{2n+1}). %D_{S}:=(2\pi)^{\frac{Q-\alpha}2} \frac{n!\Gamma(\alpha/2)}{\Gamma^2((Q+\alpha)/4)} =\mathcal{Y}_\alpha(\mathbb{S}^{2n+1}).
\end{equation}
And equality holds if and only if
\begin{equation}\label{extremal Sn}
f(\zeta)=c_1|1-\bar\xi\cdot\zeta|^{-(Q+\alpha)/2},\quad g(\zeta)=c_2 |1-\bar\xi\cdot\zeta|^{-(Q+\alpha)/2}
\end{equation}
for some $c_1, \ c_2\in\mathbb{C}$ and some $\xi\in\mathbb{C}^{n+1}$ with $|\xi|<1$ (unless $f\equiv 0$ or $g\equiv 0$).
\end{theorem}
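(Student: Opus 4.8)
The plan is to deduce the sphere inequality \eqref{sharp HLS Sn} directly from its Heisenberg-group counterpart, Theorem \ref{thm HLS Hn}, by pulling everything back through the Cayley transform $\mathcal{C}\colon \mathbb{H}^n\to\mathbb{S}^{2n+1}\setminus\{(0,\dots,0,-1)\}$. The omitted point is a single point, hence of measure zero, so it affects none of the integrals. The geometric input is already recorded in the excerpt: the distance relation \eqref{formula relations distance} governs how the kernel $d(\zeta,\eta)^{-(Q-\alpha)}$ transforms, and the integral identity \eqref{transform integral Hn S} governs how the volume element $dV_S$ transforms. Once these are combined, I expect the bilinear form and the $L^p$ norm on $\mathbb{S}^{2n+1}$ to coincide \emph{exactly} with those on $\mathbb{H}^n$ for a suitably weighted pullback, so that the sharp constant and the extremals transfer verbatim; this is what underlies the identity $D_S=D_H$ asserted in \eqref{sharp Con Sn}. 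The genuine analytic content (the sharp inequality together with its rigidity) is thereby isolated in Theorem \ref{thm HLS Hn}, which I take as given.

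Concretely, I would set $w(u)=4/((1+|z|^2)^2+t^2)$ for $u=(z,t)\in\mathbb{H}^n$, so that \eqref{formula relations distance} reads $d(\zeta,\eta)=d(u,v)\,w(u)^{1/4}w(v)^{1/4}$ with $\zeta=\mathcal{C}(u),\ \eta=\mathcal{C}(v)$, and so that the Jacobian weight in \eqref{transform integral Hn S} reduces (using $Q=2n+2$) to $2|J_\mathcal{C}(u)|=w(u)^{Q/2}$. Given $F,G\in L^p(\mathbb{S}^{2n+1})$ I define the pullbacks
\[
f(u)=F(\mathcal{C}(u))\,w(u)^{(Q+\alpha)/4},\qquad g(v)=G(\mathcal{C}(v))\,w(v)^{(Q+\alpha)/4}.
\]
Substituting the two transformation rules into the sphere bilinear form, the kernel contributes $w(u)^{-(Q-\alpha)/4}w(v)^{-(Q-\alpha)/4}$ while the two volume elements contribute $w(u)^{Q/2}w(v)^{Q/2}$; since $-(Q-\alpha)/4+Q/2=(Q+\alpha)/4$, these weights recombine precisely into the powers built into $f$ and $g$, and the sphere bilinear form equals $\int_{\mathbb{H}^n}\int_{\mathbb{H}^n}\overline{f(u)}g(v)\,d(u,v)^{-(Q-\alpha)}\,dV_0(u)\,dV_0(v)$. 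For the norms, the weight $w^{Q/2}$ coming from $dV_S$ must absorb the weight $w^{p(Q+\alpha)/4}$ from $|f|^p$; these agree because $p(Q+\alpha)/4=Q/2$ is equivalent to $p=\frac{2Q}{Q+\alpha}$, the exponent in the statement. Hence $\|f\|_{L^p(\mathbb{H}^n)}=\|F\|_{L^p(\mathbb{S}^{2n+1})}$ and likewise for $g$, so applying Theorem \ref{thm HLS Hn} to $f,g$ yields \eqref{sharp HLS Sn} with $D_S=D_H$.

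For the equality statement I would run the correspondence $F\leftrightarrow f$ backwards: since it is an $L^p$-isometry intertwining the two bilinear forms, equality in \eqref{sharp HLS Sn} holds if and only if equality holds in \eqref{sharp HLS Hn}, i.e.\ if and only if $f$ is, up to a constant, one of the extremals $H(\delta_r(a^{-1}u))$ of \eqref{extremal Hn}--\eqref{extremal1 Hn}. Unwinding $F(\zeta)=f(u)\,w(u)^{-(Q+\alpha)/4}$ for $\zeta=\mathcal{C}(u)$, one checks that the standard bubble $H(u)=((1+|z|^2)^2+t^2)^{-(Q+\alpha)/4}$ pulls back to a constant on the sphere, which is the case $\xi=0$ of \eqref{extremal Sn}. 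The Heisenberg translations and dilations $\delta_r(a^{-1}\,\cdot\,)$ are carried by $\mathcal{C}$ to the CR automorphisms of $\mathbb{S}^{2n+1}$, and tracking their action on the constant shows the general bubble becomes $c\,|1-\bar\xi\cdot\zeta|^{-(Q+\alpha)/2}$, with the pair $(a,r)$ parametrizing exactly the points $\xi\in\mathbb{C}^{n+1}$ with $|\xi|<1$. This reproduces \eqref{extremal Sn}.

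The proof is thus essentially a bookkeeping exercise once the Cayley-transform formulas \eqref{formula relations distance} and \eqref{transform integral Hn S} are in hand, and I do not expect a genuine analytic obstacle. The one step requiring care is the exponent accounting: I must verify both that the kernel weight and the two Jacobian weights recombine into the single power $(Q+\alpha)/4$ defining the pullback, and that this same power makes $F\mapsto f$ an $L^p$-isometry, each of which hinges on the precise value $p=\frac{2Q}{Q+\alpha}$. The second, and more delicate, point is the explicit identification of the extremal family on the sphere, where I must confirm that the family of Heisenberg translations and dilations is carried onto the family indexed by $\xi$ with $|\xi|<1$, with the standard bubble corresponding to $\xi=0$.
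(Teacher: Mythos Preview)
Your proposal is correct, and the bookkeeping with the weight $w(u)^{(Q+\alpha)/4}$ is exactly right: the kernel contributes $w^{-(Q-\alpha)/4}$ on each side, each volume element contributes $w^{Q/2}$, and these combine to $w^{(Q+\alpha)/4}$, which is simultaneously the weight making $F\mapsto f$ an $L^p$-isometry precisely when $p=2Q/(Q+\alpha)$. The identification of the constant function on the sphere with the standard bubble $H$ on $\mathbb{H}^n$ (the case $\xi=0$) and the transport of the dilation/translation family to the $|\xi|<1$ family is also the right picture.

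As for comparison with the paper: there is nothing to compare against. The paper does not prove this theorem; it is stated in the preliminaries as a known result quoted from Frank and Lieb \cite{Frank-Lieb2012}, with the Cayley transform identities \eqref{transform integral Hn S} and \eqref{formula relations distance} recorded just beforehand precisely so that the equivalence between Theorems \ref{thm HLS Hn} and \ref{thm HLS Sn} is transparent. Your argument is the standard way to pass between the two formulations and is implicitly what the paper has in mind by juxtaposing the two statements after those formulas.
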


\subsection{Folland-Stein normal coordinates (see \cite{Folland-Stein1974, JL1987})} On some open set $V\subset M$, take a set of \textit{pseudohermitian frame} $\{W_1,\cdots,W_n\}$. Then, $\{W_i,\overline{W_i},T, i=1,\cdots,n\}$ forms a local frame, %for $\mathbb{C}TM$,
where $T$ is determined by $\theta(T)=1$ and $d\theta(T,X)=0$ for all $X\in\ TM$. As the \textbf{Theorem 4.3} and \textbf{Remark 4.4} of \cite{JL1987}, we can summarize the result of Folland-Stein normal coordinates in the following theorem.
\begin{theorem}[Theorem 4.3 of \cite{JL1987}]\label{thm 4.3 of JL}
%Let $N$ be a strictly pseudoconvex pseudohermitian manifold of dimension $2n+1$ with contact form $\theta$, and let $V\subset N$ be an open set on which there is given a pseudohermitian frame $(W_1,\cdots, W_n)$.
There is a neighbourhood $\Omega\subset M\times M$ of the diagonal and a $C^\infty$ mapping $\Theta: \Omega\rightarrow \mathbb{H}^n$ satisfying:

{\rm (a)} $\Theta(\xi,\eta) =-\Theta(\eta,\xi) =\Theta(\eta,\xi)^{-1} \in\mathbb{H}^n$. (In particular, $\Theta(\xi,\xi)=0$.)

{\rm (b)} Denote $\Theta_\xi(\eta)=\Theta(\xi,\eta)$. $\Theta_\xi$ is thus a diffeomorphism of a neighbourhood $\Omega_\xi$ of $\xi$ onto a neighbourhood of the origin in $\mathbb{H}^n$. Denote by $u=(z,t)=\Theta(\xi,\eta)$ the coordinates of $\mathbb{H}^n$. Denote by $O^k,\ k=1,2,\cdots,$ a $C^\infty$ function $f$ of $\xi$ and $y$ such that for each compact set $K\subset\subset V$ there is a constant $C_K$, with $|f(\xi,y)|\leq C_K|y|^k$ (Heisenberg norm) for $\xi\in K$. Then, %writing $\Theta_{\xi^*}=(\Theta_\xi^{-1})^*$,
\begin{gather*}
   (\Theta_\xi^{-1})^*\theta=\theta_0+O^1 dt+\sum_{j=1}^{2n}(O^2 dz_j+O^2 d\bar{z}_j),\\
    (\Theta_\xi^{-1})^*(\theta\wedge d\theta^n)=(1+O^1)\theta_0\wedge d\theta_0^n.
\end{gather*}

{\rm (c)} \begin{align*}
    \Theta_{\xi *}W_j &=Z_j +O^1\mathscr{E}(\partial_z) +O^2\mathscr{E}(\partial_t),\quad \Theta_{\xi *}T=\frac\partial{\partial t} +O^1\mathscr{E}(\partial_z,\partial_t),\\
    \Theta_{\xi *}\Delta_b %&=\Theta_{\xi *}(\text{Re}\Box_b)\\
    &=\Delta_H +\mathscr{E}(\partial_z) +O^1\mathscr{E}(\partial_t,\partial_z^2) +O^2\mathscr{E}(\partial_z\partial_t) +O^3\mathscr{E}(\partial_t^2),
\end{align*}
in which $O^k\mathscr{E}$ indicates an operator involving linear combinations of the indicated derivatives with coefficients in $O^k$, and we have used $\partial_z$ to denote any of the derivatives $\partial/\partial z_j,\ \partial/\partial \bar{z}_j$. (The uniformity with respect to $\xi$ of bounds on functions in $O^k$ is not stated explicitly \cite{Folland-Stein1974}, but follows immediately from the fact that the coefficients are $C^\infty$.)
\end{theorem}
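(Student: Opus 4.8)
\emph{Construction of $\Theta$.} The statement is Theorem 4.3 of \cite{JL1987}, and I would reconstruct the Folland--Stein argument as follows. Fix $\xi$ and the pseudohermitian frame $\{W_1,\dots,W_n\}$ near $\xi$ together with the Reeb field $T$ determined by $\theta(T)=1$, $d\theta(T,\cdot)=0$. The first step is to replace $\{W_j\}$ by an \emph{adapted} frame osculating the Heisenberg frame $\{Z_j\}$ as highly as the parabolic homogeneity allows at $\xi$: using the Tanaka--Webster connection and its structure equations, I would run a Taylor normalization (a Gram--Schmidt-type procedure in the jets at $\xi$) so that the Lie brackets of the adapted frame agree with those of $\{Z_j\}$ at $\xi$ to the needed order, the residual discrepancy being governed by the pseudohermitian torsion and curvature. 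I would then define $\Theta_\xi^{-1}\colon U\subset\mathbb{H}^n\to M$ as a group-exponential map adapted to this frame: identify $T_\xi M$ with $T_0\mathbb{H}^n$ by $W_j\mapsto Z_j$, $T\mapsto\partial/\partial t$, and for $u=(z,t)$ flow out of $\xi$ along $\mathrm{Re}\sum_j z_j W_j$, closing up the central $t$-coordinate by the Reeb flow so as to respect the Heisenberg group law. Smooth dependence of these flows on $(\xi,u)$ gives a $C^\infty$ map $\Theta$ on a neighbourhood $\Omega$ of the diagonal, and $\Theta_\xi$ is a local diffeomorphism since its differential at $\xi$ is the chosen identification.

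\emph{Symmetry (a).} I would establish $\Theta(\xi,\eta)=\Theta(\eta,\xi)^{-1}$ by a geodesic-reversal argument: the curve joining $\xi$ to $\eta$ produced by the group exponential, run backwards and re-expressed in the adapted frame based at $\eta$, is precisely the integral curve defining $\Theta(\eta,\xi)$; antisymmetry of the adapted bracket data together with uniqueness of integral curves identifies it with the Heisenberg inverse of $\Theta(\xi,\eta)$. In particular $\Theta(\xi,\xi)=0$.

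\emph{Expansions (b) and (c).} For (b) I would Taylor-expand the pullbacks at the origin. Because the adapted frame osculates $\{Z_j\}$, the coframe dual to $\Theta_{\xi*}W_j$ differs from the Heisenberg coframe only by error $1$-forms whose coefficients are controlled by torsion and curvature; counting parabolic orders (with $\partial/\partial t$ weighing $2$ and $\partial/\partial z_j$ weighing $1$) produces $(\Theta_\xi^{-1})^*\theta=\theta_0+O^1\,dt+\sum_{j}(O^2\,dz_j+O^2\,d\bar z_j)$, and taking the $n$-fold wedge gives $(\Theta_\xi^{-1})^*(\theta\wedge d\theta^n)=(1+O^1)\,\theta_0\wedge d\theta_0^n$. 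For (c) I would dualize this: inverting the coframe expansion yields $\Theta_{\xi*}W_j=Z_j+O^1\mathscr{E}(\partial_z)+O^2\mathscr{E}(\partial_t)$ and $\Theta_{\xi*}T=\partial/\partial t+O^1\mathscr{E}(\partial_z,\partial_t)$, and substituting these into $\Delta_b=-\sum_j(W_j\bar W_j+\bar W_j W_j)$ modulo lower-weight (first-order) frame terms, then collecting contributions by parabolic weight, gives the stated expansion of $\Theta_{\xi*}\Delta_b$. Uniformity of the $O^k$ bounds over $\xi$ in a compact set is automatic since all coefficients are $C^\infty$.

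\emph{Main obstacle.} The delicate point is the interplay between the symmetry (a) and the precise order bookkeeping under the anisotropic Heisenberg scaling: the adapted frame must be built so that \emph{every} error term lands in exactly the indicated $O^k\mathscr{E}$ class while the construction remains symmetric enough to force the group-inverse relation. Once the adapted frame is correctly normalized, the remaining steps are careful but essentially routine Taylor expansions.
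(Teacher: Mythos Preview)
The paper does not give a proof of this theorem at all: it is quoted verbatim as Theorem~4.3 of \cite{JL1987} (itself a restatement of the Folland--Stein normal-coordinate construction from \cite{Folland-Stein1974}), and is used purely as a black box in the subsequent blow-up analysis. So there is no ``paper's own proof'' to compare against; your proposal is an attempt to reconstruct a result the author simply cites.

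As a sketch of the Folland--Stein/Jerison--Lee argument your outline is in the right spirit---frame normalization at $\xi$, a parabolic exponential-type chart, and Taylor bookkeeping by Heisenberg weight---but a couple of points are softer than they look. The exact symmetry $\Theta(\xi,\eta)=\Theta(\eta,\xi)^{-1}$ in (a) is not a consequence of a generic ``geodesic reversal'' for the group-exponential you describe; in the original construction it is arranged by \emph{defining} $\Theta$ in a way that is manifestly antisymmetric (essentially via coordinates that are odd under swapping base and target), rather than deduced a posteriori from flow reversal. Your version would at best give the relation up to an $O^k$ error unless the adapted frames at $\xi$ and at $\eta$ are tied together by the same normalization, which you have not argued. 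Second, the precise landing of each error term in the indicated $O^k\mathscr{E}$ class---especially the absence of an $O^0\mathscr{E}(\partial_t)$ term in $\Theta_{\xi*}W_j$ and the $O^3$ in front of $\partial_t^2$ for $\Delta_b$---requires more than generic weight counting: it uses that the contact condition kills the leading $\partial_t$ contribution and that the frame has been normalized one order beyond what a naive Gram--Schmidt gives. These are exactly the points where \cite{Folland-Stein1974} does nontrivial work, and your sketch would need to make them explicit to be a proof rather than a plausibility argument.
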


\begin{theorem}[Remark 4.4 of \cite{JL1987}]
Let $T^\delta(z,t)=(\delta^{-1}z,\delta^{-2}t),\ K\subset\subset V$, and let $r$ be fixed. With the notation of Theorem \ref{thm 4.3 of JL} and $B_r=\{ u\in\mathbb{H}^n:\ |u|\leq r\}$, then $T^\delta\circ\Theta_\xi(\Omega_\xi)\supset B_r$ for sufficiently small $\delta$ and all $\xi\in K$. Moreover, for $\xi\in K$ and $u\in B_r$,
\begin{align*}
    &\bigl((T^\delta\circ\Theta_\xi)^{-1}\bigr)^*\theta =\delta^2(1+\delta O^1)\theta_0,\\
    &\bigl((T^\delta\circ\Theta_\xi)^{-1}\bigr)^*(\theta\wedge d\theta^n) =\delta^{2n+2}(1+\delta O^1) \theta_0\wedge d\theta_0^n,\\
    &(T^\delta\circ\Theta_\xi)_* W_j =\delta^{-1}(Z_j +\delta O^1\mathscr{E}(\partial_z) +\delta^2O^2\mathscr{E}(\partial_t)),\\
    &(T^\delta\circ\Theta_\xi)_*\Delta_b =\delta^{-2}(\Delta_H +\mathscr{E}(\partial_z) +\delta O^1\mathscr{E}(\partial_t,\partial_z^2)\\
    &\hspace{2.0cm} +\delta^2\mathscr{E}(\partial_z\partial_t) +\delta^3 O^3\mathscr{E}(\partial_t^2)).
\end{align*}
(Here $O^k$ may depeng also on $\delta$, but its derivatives are bounded by multiplies of the frame constants, uniformly as $\delta\rightarrow 0$. Recall that $T^\delta_*Z_j=\delta^{-1}Z_j$, and $\bigl((T^\delta)^{-1}\bigr)^*\theta_0=\delta^2\theta_0$.)
\end{theorem}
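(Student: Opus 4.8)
The plan is to read every identity off Theorem \ref{thm 4.3 of JL} by combining the naturality of the pullback and pushforward under the factorization $T^\delta\circ\Theta_\xi$ with the anisotropic homogeneity of the Heisenberg dilation $T^\delta$. For the two form identities I would use $\bigl((T^\delta\circ\Theta_\xi)^{-1}\bigr)^*=\bigl((T^\delta)^{-1}\bigr)^*\circ(\Theta_\xi^{-1})^*$; for the frame $W_j$ and for the operator $\Delta_b$ I would use the covariant rule $(T^\delta\circ\Theta_\xi)_*=(T^\delta)_*\circ(\Theta_\xi)_*$, where for the second order operator $(T^\delta)_*$ acts by conjugation, $(T^\delta)_*L:\,v\mapsto\bigl(L(v\circ T^\delta)\bigr)\circ(T^\delta)^{-1}$. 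Once the expansions of Theorem \ref{thm 4.3 of JL} are substituted, everything reduces to understanding how $T^\delta$ acts on the model objects $\theta_0,\,dz_j,\,dt,\,Z_j,\,\partial_z,\,\partial_t$ and on the remainder symbols $O^k$.

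I would record two scaling inputs. First, since $(T^\delta)^{-1}(z,t)=(\delta z,\delta^2 t)$, the dilation acts diagonally on the model coframe and frame: $\bigl((T^\delta)^{-1}\bigr)^*dz_j=\delta\,dz_j$, $\bigl((T^\delta)^{-1}\bigr)^*dt=\delta^2\,dt$ (hence $\bigl((T^\delta)^{-1}\bigr)^*\theta_0=\delta^2\theta_0$), and dually $(T^\delta)_*\partial_{z_j}=\delta^{-1}\partial_{z_j}$, $(T^\delta)_*\partial_t=\delta^{-2}\partial_t$, so that $(T^\delta)_* Z_j=\delta^{-1}Z_j$ and $(T^\delta)_*\Delta_H=\delta^{-2}\Delta_H$. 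Second, and this is the decisive point for the remainders, the Heisenberg gauge is homogeneous of degree one under $T^\delta$, so $\lvert(T^\delta)^{-1}(u)\rvert=\delta\lvert u\rvert$; consequently an $O^k$ coefficient evaluated at $(T^\delta)^{-1}(u)$ is bounded by $C\delta^k\lvert u\rvert^k$ on $B_r$, i.e. it is converted into $\delta^k O^k(u)$, and because all derivatives of the underlying $C^\infty$ coefficients are bounded on the compact set $K$, the rescaled remainders $O^k(u)$ carry derivative bounds that are uniform as $\delta\to0$.

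Assembling these, each line follows by substituting the Theorem \ref{thm 4.3 of JL} expansions and collecting the resulting weights: the principal parts produce the leading factors $\delta^2\theta_0$, $\delta^{2n+2}\theta_0\wedge d\theta_0^n$, $\delta^{-1}Z_j$ and $\delta^{-2}\Delta_H$, while each remainder is governed by the competition between the gauge factor $\delta^k$ coming from its $O^k$ coefficient and the weight of the derivatives and differentials it contains (each $\partial_z$, $dz$ carrying weight $1$ and each $\partial_t$, $dt$ weight $2$). Collecting shows that every remainder term keeps a nonnegative, and for the genuinely subprincipal terms strictly positive, extra power of $\delta$ together with $\delta$-uniform derivative bounds; this is precisely what forces $\delta^2(T^\delta\circ\Theta_\xi)_*\Delta_b$ to converge to the model operator on $\mathbb{H}^n$ as $\delta\to0$. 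The inclusion $T^\delta\circ\Theta_\xi(\Omega_\xi)\supset B_r$ I would obtain from compactness of $K$: Theorem \ref{thm 4.3 of JL} furnishes a fixed $r_0>0$ with $\Theta_\xi(\Omega_\xi)\supset B_{r_0}$ for all $\xi\in K$, and by the gauge homogeneity $T^\delta(B_{r_0})=B_{r_0/\delta}\supset B_r$ as soon as $\delta\le r_0/r$.

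I expect the main obstacle to be the $\delta$-power bookkeeping for the second order operator $\Delta_b$, where every term simultaneously contributes a coefficient gauge factor $\delta^k$ and a derivative weight, so that one must track the two contributions against each other and verify that no remainder overwhelms the principal part while all derivatives of the remainders stay bounded uniformly in $\delta$. This careful accounting is exactly the content of Remark 4.4 of \cite{JL1987}, to which I would refer for the residual routine verifications.
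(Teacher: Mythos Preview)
The paper does not supply its own proof of this statement: it is quoted verbatim as Remark 4.4 of \cite{JL1987} and left unproved, so there is nothing in the paper to compare your argument against. Your approach---factoring $(T^\delta\circ\Theta_\xi)$ and using the anisotropic homogeneity of the Heisenberg dilation to convert each $O^k$ coefficient into $\delta^k O^k$ while reading off the weights of $dz,\ dt,\ \partial_z,\ \partial_t$---is exactly the standard derivation from Theorem \ref{thm 4.3 of JL}, and is presumably what \cite{JL1987} intends.
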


\subsection{Function spaces (see \cite{Folland-Stein1974, JL1987})}
Let $U$ be a relatively compact open subset of a normal coordinate neighbourhood $\Omega_\xi\subset M$ and take $(W_1,\cdots,W_n)$ be a pseudohermitian frame on $U$. Let $X_j=\text{Re}W_j$ and $X_{j+n}=\text{Im}W_j$ for $j=1,\cdots,n$. Denote $X^\alpha=X_{\alpha_1}\cdots\cdots X_{\alpha_k}$, where $\alpha=(\alpha_1,\cdots,\alpha_k)$, each $\alpha_j$ an integer $1\leq \alpha_j\leq 2n$, and denote $l(\alpha)=k$.

The Folland-Stein space $S_k^p(U)$ is defined as the completion of $C_0^\infty(U)$ with respect to the norm
    $$\|f\|_{S_k^p(U)}=\sup_{l(\alpha)\leq k} \|X^\alpha f\|_{L^p(U)}, \quad \text{with}\ \|X^\alpha f\|^p_{L^p(U)}=\int_U |X^\alpha f|^p dV_\theta.$$

On the other hand, Folland and Stein also defined H\"{o}lder spaces $\Gamma_\beta(U)$ as follows. Take $\rho(\xi,\eta)=|\Theta(\xi,\eta)|$ (Heisenberg norm) as the natural distance function on $U$. Then, for $0<\beta<1$ define
    $$\Gamma_\beta(U)=\{f\in C^0(\overline{U}): |f(\eta)-f(\xi)|\leq C\rho(\xi,\eta)^\beta\}$$
with norm
    $$\|f\|_{\Gamma_\beta(U)} =\sup_{\xi\in U} |f(\xi)| +\sup_{\xi,\eta\in U} \frac{|f(\eta)-f(\xi)|}{\rho(\xi,\eta)^\beta}.$$
For $\beta=1$ define
    $$\Gamma_1(U)=\{f\in C^0(\overline{U}): |f(\eta)+f(\tilde\eta)-2f(\xi)|\leq C\rho(\xi,\eta)\}$$
with norm
    $$\|f\|_{\Gamma_1(U)} =\sup_{\xi\in U} |f(\xi)| +\sup_{\xi,\eta\in U} \frac{|f(\eta)+f(\tilde\eta)-2f(\xi)|} {\rho(\xi,\eta)},$$
where $\tilde\eta=\Theta_\xi^{-1}(-\Theta_\xi(\eta))$.
For $\beta=k+\beta'$ with $k=1,2,\cdots$ and $0<\beta'\leq 1$, define
    $$\Gamma_\beta(U)=\{f\in C^0(\overline{U}):\ X^\alpha f\in \Gamma_{\beta'}(U) \text{ for } l(\alpha)\leq k\}$$
with norm
    $$\|f\|_{\Gamma_\beta(U)}=\begin{cases} \sup\limits_{\xi\in U}|f(\xi)| +\sup\limits_{\substack{\xi,\eta\in U\\ l(\alpha)\leq k}} \frac{|X^\alpha f(\eta)-X^\alpha f(\xi)|}{\rho(\xi,\eta)^{\beta'}}, \quad 0<\beta'<1,\\
    \sup\limits_{\xi\in U}|f(\xi) +\sup\limits_{\substack{\xi,\eta\in U\\ l(\alpha)\leq k}} \frac{|X^\alpha f(\eta)+X^\alpha f(\tilde\eta)-2X^\alpha f(\xi)|}{\rho(\xi,\eta)}, \quad \beta'=1. \end{cases}$$

Fix the local coordinates of $U$ by $u=(z,t)=\Theta_\xi$ for some given point $\xi\in U$. Then, for $0<\beta<1$, the standard H\"{o}lder space $\Lambda_\beta(U)$ is
    $$\Lambda_\beta(U)=\{f\in C^0(\overline{U}): |f(u)-f(v)|\leq C\|u-v\|^\beta\}$$
with norm
    $$\|f\|_{\Lambda_\beta(U)} =\sup_{u\in U} |f(\xi)| +\sup_{u,v\in U} \frac{|f(u)-f(v)|}{\|u-v\|^\beta}.$$
While for $\beta\geq 1$, $\Lambda_\beta(U)$ can be defined similarly. Furthermore, we have
\begin{proposition}[Theorem 20.1 of \cite{Folland-Stein1974} \& Proposition 5.7 of \cite{JL1987}]\label{thm 20.1 of FS}
$\Gamma_\beta\subset \Lambda_{\beta/2}(\text{loc})$ for $0<\beta<\infty$ and there exists some positive constant $C$ such taht $\|f\|_{\Lambda_{\beta/2}(U)}\leq C\|f\|_{\Gamma_\beta(U)}$ for any $f\in C_o^\infty(U)$.
\end{proposition}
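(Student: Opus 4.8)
The plan is to reduce the entire statement to one pointwise comparison between the Heisenberg control distance $\rho$ and the Euclidean distance in the fixed chart, and then to pay for the loss of one order in the vertical ($t$) direction using the commutator structure of the horizontal frame. Fix the chart $u=(z,t)=\Theta_\xi$ on $U$; by the normal-coordinate estimates of Theorem \ref{thm 4.3 of JL} the intrinsic distance $\rho(\eta_1,\eta_2)=|\Theta(\eta_1,\eta_2)|$ is comparable, uniformly on the relatively compact set $U$, to the flat left-invariant Heisenberg distance $|v^{-1}u|$ of the corresponding coordinates, so it suffices to argue in the flat model $\mathbb{H}^n$. The one inequality needed for the embedding is
\[
    \rho(u,v)^2\leq C\,\|u-v\|,\qquad u,v\in U .
\]
To prove it, write $v^{-1}u=(\Delta z,\ \Delta t+c)$ with $\Delta z=z-z'$, $\Delta t=t-t'$ and $c=2\,\mathrm{Im}(\Delta z\cdot\bar z)$, so that $|c|\leq 2M|\Delta z|$ with $M=\sup_U|z|$. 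Then $(\Delta t+c)^2\leq 2(\Delta t)^2+8M^2|\Delta z|^2$, and since $|\Delta z|^4\leq|\Delta z|^2$ on the bounded set one gets $\rho(u,v)^4=|\Delta z|^4+(\Delta t+c)^2\leq C\|u-v\|^2$, which is the claim. The opposite comparison $\|u-v\|\leq C\rho(u,v)$ holds as well and shows the exponent is sharp, since on the $t$-axis $\rho^2=\|u-v\|$ exactly.

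For $0<\beta<1$ the conclusion is then immediate: inserting $\rho(u,v)\leq C\|u-v\|^{1/2}$ into the defining bound $|f(v)-f(u)|\leq \|f\|_{\Gamma_\beta}\,\rho(u,v)^\beta$ gives $|f(v)-f(u)|\leq C\|f\|_{\Gamma_\beta}\|u-v\|^{\beta/2}$, which is exactly the $\Lambda_{\beta/2}$ seminorm estimate (the sup-norms coincide). At the endpoint $\beta=1$ the $\Gamma_1$ condition is a second-difference (Zygmund) bound $|f(\eta)+f(\tilde\eta)-2f(\xi)|\leq C\rho$ rather than a first-difference one, so here I would feed the metric comparison into a dyadic telescoping along the $z$- and $t$-blocks separately to recover $|f(v)-f(u)|\leq C\|u-v\|^{1/2}$.

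For $\beta=k+\beta'\geq1$ I would argue by weights, the crucial input being that $\partial_t$ is a fixed linear combination of the horizontal commutators $[X_i,X_j]$ plus lower-order horizontal terms; this is the Heisenberg relation $[Z_j,\overline{Z_j}]=-2\sqrt{-1}\,\partial_t$ read off from Theorem \ref{thm 4.3 of JL}(c). Hence one vertical derivative costs two horizontal ones: the horizontal derivatives satisfy $X_jf\in\Gamma_{\beta-1}$ and control $\partial_z f$, while $\partial_t f\in\Gamma_{\beta-2}$. Running these through the embedding of the first two paragraphs makes $f$ Hölder of Euclidean order $(\beta+1)/2$ in the $z$-variables and of order $\beta/2$ in $t$; the isotropic exponent is the smaller of the two, namely $\beta/2$, and an induction on $k$ together with the corresponding bookkeeping for the seminorms yields $\|f\|_{\Lambda_{\beta/2}(U)}\leq C\|f\|_{\Gamma_\beta(U)}$.

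The main obstacle is exactly this vertical direction. Because the anisotropic scaling assigns weight $2$ to $t$, smoothness of intrinsic order $\beta$ buys only Euclidean order $\beta/2$ there, and one must carry the commutator-cost accounting consistently through every derivative order while treating the integer endpoints, where first differences are inadequate and the second-difference telescoping is forced. The $z$-directions, carrying weight $1$, are comparatively free and never determine the final exponent.
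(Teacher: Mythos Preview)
The paper does not give a proof of this proposition: it is stated purely as a citation of Theorem~20.1 in Folland--Stein and Proposition~5.7 in Jerison--Lee, with no argument supplied in the text. There is therefore no in-paper proof to compare your attempt against.

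On its own merits your sketch follows the standard line of the cited sources. The core inequality $\rho(u,v)^2\le C\|u-v\|$ on a bounded chart is correct and your derivation of it is clean; plugging it into the $\Gamma_\beta$ seminorm immediately settles the range $0<\beta<1$. For $\beta\ge1$ the commutator identity $\partial_t\sim[X_j,X_{n+j}]$ is indeed the mechanism that trades two horizontal orders for one vertical one, and induction on the order is the right scheme. Two places remain only heuristic rather than proved: the $\beta=1$ Zygmund endpoint, where a genuine second-difference-to-first-difference telescoping lemma must actually be carried out, and the higher-order exponent bookkeeping, where your claimed anisotropic orders ($(\beta+1)/2$ in $z$, $\beta/2$ in $t$) require justification and then a passage to the isotropic $\Lambda_{\beta/2}$ norm that accounts for all mixed Euclidean derivatives up to order $\lfloor\beta/2\rfloor$. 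These are standard but nontrivial details rather than errors of strategy.
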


Now for a compact strictly pseudoconvex pseudohermitian manifold $M$, choose a finite open covering $U_1,\cdots,U_m$ for which each $U_j$ has the properties of $U$ above. Choose a $C^\infty$ partition of unity $\phi_i$ subordinate to this covering, and define
\begin{gather*}
    S_k^p(M)=\{f\in L^1(M):\ \phi_i f\in S_k^p(U) \text{ for all}  i\},\\
    \Gamma_\beta(M)=\{f\in C^0(M):\ \phi_if\in \Gamma_\beta(U_j) \text{ for all } i\}.
\end{gather*}

Following, for convenience, denote $p_\alpha=\frac{2Q}{Q-\alpha}$ and $q_\alpha=\frac{2Q}{Q+\alpha}$.

\section{Estimation of the sharp constant}\label{Sec estimate}

\begin{proposition}
    $$\mathcal{Y}_\alpha(M)\geq D_H.$$
\end{proposition}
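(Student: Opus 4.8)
The plan is to prove this lower bound by a transplantation-and-concentration argument: we insert into the extremal quotient defining $\mathcal{Y}_\alpha(M)$ a family of test functions built from the explicit Heisenberg extremals $H$ of Theorem \ref{thm HLS Hn}, rescaled so as to concentrate at a fixed point $\xi_0\in M$, and we let the concentration scale $r\to 0$. Heuristically, near $\xi_0$ the manifold is asymptotically Heisenberg (Theorem \ref{thm 4.3 of JL} and Remark 4.4) and, by \eqref{Green} together with $Q-2=2n$, the kernel $[G_\xi^\theta(\eta)]^{\frac{Q-\alpha}{Q-2}}$ is asymptotic on the diagonal to the Heisenberg HLS kernel $\rho(\xi,\eta)^{\alpha-Q}$; so the rescaled quotient should tend to the sharp Heisenberg constant $D_H$, forcing $\mathcal{Y}_\alpha(M)\ge D_H$.

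Concretely, fix $\xi_0$, take Folland--Stein normal coordinates $\Theta_{\xi_0}:\Omega_{\xi_0}\to\mathbb H^n$ and a cutoff $\chi\in C_0^\infty(\Omega_{\xi_0})$ with $\chi\equiv 1$ near $\xi_0$, and for small $r>0$ set
$$ f_r(\eta)=\chi(\eta)\,r^{-(Q+\alpha)/2}\,H\!\left(\delta_{1/r}\Theta_{\xi_0}(\eta)\right),\qquad \delta_r(z,t)=(rz,r^2t),$$
the exponent $-(Q+\alpha)/2=-Q/q_\alpha$ being chosen so that the $L^{q_\alpha}$-mass is scale invariant on $\mathbb H^n$. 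Substituting $\Theta_{\xi_0}(\xi)=\delta_r u$ and $\Theta_{\xi_0}(\eta)=\delta_r v$, Remark 4.4 gives $dV_\theta=r^{Q}(1+r\,O^1)\,dV_0$, so $\|f_r\|_{L^{q_\alpha}(M)}^{q_\alpha}\to\|H\|_{L^{q_\alpha}(\mathbb H^n,dV_0)}^{q_\alpha}$ as $r\to 0$. For the bilinear term one uses $[G_\xi^\theta(\eta)]^{\frac{Q-\alpha}{Q-2}}=\rho(\xi,\eta)^{\alpha-Q}\bigl(1+O(\rho^{2n})\bigr)$ near the diagonal and $\rho(\xi,\eta)=|\Theta(\xi,\eta)|=r\,d(u,v)(1+o(1))$ under the rescaling; the $r$-powers cancel, $(\alpha-Q)-(Q+\alpha)+2Q=0$, and therefore
$$ \int_M\!\int_M [G_\xi^\theta(\eta)]^{\frac{Q-\alpha}{Q-2}}f_r(\xi)f_r(\eta)\,dV_\theta(\xi)\,dV_\theta(\eta)\ \longrightarrow\ \int_{\mathbb H^n}\!\int_{\mathbb H^n}\frac{H(u)H(v)}{d(u,v)^{Q-\alpha}}\,dV_0(u)\,dV_0(v).$$
Since $f=g=H$ realizes equality in \eqref{sharp HLS Hn}, dividing and letting $r\to 0$ gives $\mathcal{Y}_\alpha(M)\ge D_H$.

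The step to be executed with care, and the main obstacle, is the passage to the limit inside the double integral: one must show that the rescaled kernel $r^{Q-\alpha}[G_{\Theta_{\xi_0}^{-1}(\delta_r u)}^\theta(\Theta_{\xi_0}^{-1}(\delta_r v))]^{\frac{Q-\alpha}{Q-2}}$ converges to $d(u,v)^{\alpha-Q}$ locally uniformly off $\{u=v\}$ and admits an integrable majorant, uniformly as the (moving) base point $\xi$ tends to $\xi_0$; this rests on applying the Folland--Stein estimates of Theorem \ref{thm 4.3 of JL} and Remark 4.4 simultaneously in $\xi$ and in $\eta$, together with the expansion \eqref{Green}. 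Given this, dominated convergence handles the bulk, while the contribution of the bubble's tail — the region where the cutoff $\chi$ acts, which after rescaling recedes to infinity — is $o(1)$ because $H\in L^{q_\alpha}(\mathbb H^n)$, the tail estimate for the double integral following from H\"older's inequality and \eqref{roughly HLS diagonal}; the error terms $O^1$ and $O(\rho^{2n})$ likewise contribute $o(1)$ by the same scaling bookkeeping. None of this is conceptually deep, but the uniform-in-base-point kernel analysis is where the real work lies.
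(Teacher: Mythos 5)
Your proposal is correct and follows essentially the same route as the paper: transplant the Frank--Lieb extremal $H$, concentrated at a point via Folland--Stein normal coordinates, into the quotient defining $\mathcal{Y}_\alpha(M)$, use $(G_\xi^\theta(\eta))^{\frac{Q-\alpha}{Q-2}}\sim\rho(\xi,\eta)^{\alpha-Q}$ near the diagonal together with the $(1+O^1)$ volume comparison, and show the tail and error terms vanish in the concentration limit. The only differences are cosmetic: you use a smooth cutoff and dominated convergence where the paper truncates sharply on a cylinder $\Sigma_R$ and bounds the two tail terms quantitatively (via the Euler--Lagrange equation for $f_\epsilon$ and the HLS inequality), and both arguments leave the same uniform-in-base-point identification of $\rho$ with the Heisenberg distance at a comparable level of detail.
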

\begin{proof}
Since $(G_\xi^\theta(\eta))^{\frac{Q-\alpha}{Q-2}}\sim \rho(\xi,\eta)^{\alpha-Q}$ as $\rho(\xi,\eta)\rightarrow 0$, then for any small enough $\delta>0$, there exists a neighbourhood $V$ of the diagonal of $M\times M$ such that
\begin{equation}\label{formula compare}
    (1-\delta)\rho(\xi,\eta)^{\alpha-Q}\leq (G_\xi^\theta(\eta))^{\frac{Q-\alpha}{Q-2}} \leq (1+\delta)\rho(\xi,\eta)^{\alpha-Q}.
\end{equation}

Recall  that $f(u)=H(u)$ is an extremal function to the sharp HLS inequality in Theorem \ref{thm HLS Hn}, as well as its conformal equivalent class:
\begin{equation}\label{fe}
f_\epsilon(u)=\epsilon^{-\frac{Q+\alpha}2} H(\delta_{\epsilon^{-1}}(u)),\quad \forall\epsilon>0.
\end{equation}
Thus
\[
\|I_\alpha f\|_{L^{p_\alpha}(\mathbb{H}^n)}=\|I_\alpha f_\epsilon\|_{L^{p_\alpha}(\mathbb{H}^n)},
\quad\|f\|_{L^{q_\alpha}(\mathbb{H}^n)} =\|f_\epsilon\|_{L^{q_\alpha}(\mathbb{H}^n)},
\]
and $f_\epsilon(u)$ satisfies integral equation
\begin{equation}\label{EL-equ-1}
f_\epsilon^\frac{Q-\alpha}{Q+\alpha}(u)=B\int_{\mathbb{H}^n} \frac{f_\epsilon(v)}{|v^{-1}u|^{Q-\alpha}}dV_0(v),
\end{equation}
where $B$ is a positive constant. %Following, we choose $\zeta=0$.

Let $\Sigma_R=\{u=(z,t)\in\mathbb{H}^n:\ |z|<R,\ |t|<R^2\}$ be a cylindrical set, where $R$ is a fixed constant to be determined later, and take a test function $g(u)\in L^{q_\alpha}(\mathbb{H}^n)$ as
 \begin{eqnarray*}
g(u)= \begin{cases}f_\epsilon(u) &\quad u\in \Sigma_{R}(\zeta),\\
 0&\quad u\in\Sigma_{R}^C=\mathbb{H}^n\backslash \Sigma_{R}.
  \end{cases}
  \end{eqnarray*}
Then, %for small enough $\epsilon$,
\begin{eqnarray*}
& &\int_{\mathbb{H}^n} \int_{\mathbb{H}^n} \frac{g(u)g(v)} {|v^{-1}u|^{Q-\alpha}} dV_0(u) dV_0(v)=\int_{\mathbb{H}^n} \int_{\mathbb{H}^n} \frac{ f_\epsilon(u) f_\epsilon(v)}{|v^{-1}u|^{Q-\alpha}} dV_0(u) dV_0(v)\\
& &-2\int_{\mathbb{H}^n} \int_{\Sigma_{R}^C} \frac{ f_\epsilon(u) f_\epsilon(v)}{|v^{-1}u|^{Q-\alpha}} dV_0(u) dV_0(v)
+\int_{\Sigma_{R}^C } \int_{\Sigma_{R}^C } \frac{ f_\epsilon(u) f_\epsilon(v)}{|v^{-1}u|^{Q-\alpha}} dV_0(u) dV_0(v)\\
&&=D_H\|f_\epsilon\|^2_{L^{q_\alpha}(\mathbb{H}^n)}-I_1+I_2,
\end{eqnarray*}
where
\begin{align*}
I_1:=&2\int_{\mathbb{H}^n} \int_{\Sigma_{R}^C(\zeta) } \frac{ f_\epsilon(u) f_\epsilon(v)}{|v^{-1}u|^{Q-\alpha}} dV_0(u) dV_0(v)\\
I_2:=&\int_{\Sigma_{R}^C(\zeta) } \int_{\Sigma_{R}^C(\zeta) } \frac{ f_\epsilon(u) f_\epsilon(v)}{|v^{-1}u|^{Q-\alpha}} dV_0(u) dV_0(v).
\end{align*}
With \eqref{EL-equ-1}, we have
\begin{align*}
I_1 =
C\int_{\Sigma_{R}^C} f_\epsilon^{\frac{2Q}{Q+\alpha}}(u) dV_0(u) =O(\frac{R}\epsilon)^{-Q},  \quad\quad\text{as}\quad\epsilon\to 0.
\end{align*}
For $I_2$, by HLS inequality \eqref{sharp HLS Hn}, we have
\begin{eqnarray*}
I_2&\le&D_H\|f_\epsilon\|^2_{L^{q_\alpha}(\Sigma_R^C(\zeta))}= O(\frac{R}\epsilon)^{-Q-\alpha}\quad\quad\text{as}\quad\epsilon\to 0.
\end{eqnarray*}

Hence, for small enough $\epsilon$, we have
\begin{eqnarray}\label{estimate 1}
    \frac{\int_{\Omega} \int_{\Omega} g(u)g(v)|v^{-1}u|^{\alpha-Q} dV_0(u) dV_0(v)}{\|g\|^2_{L^{q_\alpha}(\Omega)}}\geq D_H-O(\frac{R}\epsilon)^{-Q}.
\end{eqnarray}

For any given point $\xi\in M$, there exists a neighbourhood $V_\xi\subset V$ such that %Theorem \ref{thm 14.10 of FS} and
Theorem \ref{thm 4.3 of JL} hold. So, choose $R$ small enough such that $\Sigma_R\subset\Theta_\xi(V_\xi)$ %, $K(\eta,\zeta)=\rho(\eta,\zeta)^{\alpha-Q}$
and $(\Theta_\xi^{-1})^*(dV_\theta)=(1+O^1)dV_0$. Let
    $$\Phi(\eta)=\begin{cases}f_\epsilon(\Theta_\xi(\eta)), &\quad \text{in}\quad E_\xi(\Sigma_R),\\
    0,& \quad\text{in}\quad M\setminus E_\xi(\Sigma_R).\end{cases}$$
Then,
\begin{align*}
    &\int_M |\Phi(\eta)|^{2Q/(Q+\alpha)} dV_\theta(\eta) = \int_{\Sigma_R} |f_\epsilon(u)|^{2Q/(Q+\alpha)} (1+O^1) dV_0,\\ %\label{estimate 2}\\
    &\iint_{M\times M} \Phi(\eta)\Phi(\zeta) (G^\zeta_\xi(\eta))^{\frac{Q-\alpha}{Q-2}} dV_\theta(\eta) dV_\theta(\zeta)\nonumber\\
    \geq &(1-\delta)\iint_{M\times M} \Phi(\eta)\Phi(\zeta) \rho(\eta,\zeta)^{\alpha-Q} dV_\theta(\eta) dV_\theta(\zeta)\nonumber\\
    =& (1-\delta) \iint_{\Sigma_R\times\Sigma_R} f_\epsilon(u) f_\epsilon(v) |u^{-1}v|^{\alpha-Q} (1+O^1)^2 dV_0(u)dV_0(v), %\label{estimate 3}
\end{align*}
%It is deduce that
%\begin{equation}\label{estimate 4}
%    |II_2|\leq C\left(\int_M\Phi(\eta) dV_\eta\right)^2 =C\left(\int_{\Sigma_R}f_\epsilon(u) (1+O^1)du\right)^2 =O(\epsilon^{Q-\alpha}), \quad\text{as}\quad \epsilon\rightarrow 0,
%\end{equation}
%and
%\begin{align}\label{estimate 5}
%    II_1=\iint_{\Sigma_R\times\Sigma_R} f_\epsilon(u) f_\epsilon(v) |u^{-1}v|^{\alpha-Q} (1+O^1)^2 dudv
%\end{align}
%Combining \eqref{estimate 1},\eqref{estimate 3}, we have
and
\begin{align*}
    \mathcal{Y}_\alpha(M) &\geq \frac{\iint_{M\times M} \Phi(\eta)\Phi(\zeta) (G^\zeta_\xi(\eta))^{\frac{Q-\alpha}{Q-2}} dV_\theta(\eta) dV_\theta(\zeta)}{\|\Phi\|_{L^{2Q/(Q+\alpha)}}^2(M)}\\
    &\geq \frac{(1-\delta)\iint_{\Sigma_R\times\Sigma_R} f_\epsilon(u) f_\epsilon(v) |u^{-1}v|^{\alpha-Q} (1+O^1)^2 dV_0(u) dV_0(v)} {(1+O^1)^{(Q+\alpha)/Q}  \|f_\epsilon\|_{L^{2Q/(Q+\alpha)}}^2(\Sigma_R)}\\
    &\geq (1-\delta)(1+O^1)^{2-(Q+\alpha)/Q}(D_H-O(\frac R\epsilon)^{-Q})+O(\epsilon^{Q-\alpha}).
\end{align*}
sending $R$ to $0$ and then letting $\delta, \epsilon$ approach to zero, we obtain the estimate.
\end{proof}

\section{Subcritical HLS inequalities on compact CR manifold}\label{Sec sub}

\subsection{Subcritical HLS inequalities and their extremal function}

\begin{proposition}[Young's inequality]\label{pro Young ineq}
Let $X$ and $Y$ are measurable spaces, and let the kernel function $K: X\times Y\rightarrow R$ be a measuralble function satisfying
    $$\int_X |K(x,y)|^r dx\leq C^r \quad\text{and}\quad \int_Y |K(x,y)|^r dy\leq C^r,$$
where $C$ is some positive constant and $r\geq 1$. Then, for any $f\in L^p(Y)$ with $1-1/r\leq 1/p\leq 1$, the integral operator
    $$Af(x)=\int_Y K(x,y)f(y)dy$$
satisfies $\|Af\|_{L^q(X)}\leq C\|f\|_{L^p(X)}$, where $1\leq p\leq q$ and $\frac 1q=\frac 1p+\frac 1r-1$.
\end{proposition}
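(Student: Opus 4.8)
The plan is to obtain the bound from a single application of H\"older's inequality with three exponents, followed by Tonelli's theorem to interchange the order of integration; no interpolation machinery is strictly needed, though the same result can be read as the Riesz--Thorin interpolant of the two endpoint estimates $A\colon L^1(Y)\to L^r(X)$ (Minkowski's integral inequality, using $\int_X|K(x,y)|^r\,dx\le C^r$) and $A\colon L^{r'}(Y)\to L^\infty(X)$ (H\"older's inequality, using $\int_Y|K(x,y)|^r\,dy\le C^r$), whose interpolation exponents reproduce exactly the relation $\tfrac1q=\tfrac1p+\tfrac1r-1$ and the admissible range $1-\tfrac1r\le\tfrac1p\le1$.

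First I would fix auxiliary exponents $\mu,\nu$ by $\tfrac1\mu=\tfrac1r-\tfrac1q$ and $\tfrac1\nu=\tfrac1p-\tfrac1q$. Using $\tfrac1q=\tfrac1p+\tfrac1r-1$ one checks $\tfrac1q+\tfrac1\mu+\tfrac1\nu=1$, and the hypotheses $r\ge1$ and $1-\tfrac1r\le\tfrac1p\le1$ are precisely what force $0\le\tfrac1\mu\le1$ and $0\le\tfrac1\nu\le1$ (equivalently $q\ge r$ and $q\ge p$); the boundary cases $p=1$, $r=1$, or $p=q$ make one of $\mu,\nu$ equal to $\infty$ and are handled by the usual $L^\infty$-convention in H\"older's inequality. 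Working with $|K|$ and $|f|$, I factor the integrand as
\[
|K(x,y)|\,|f(y)|=\bigl(|K(x,y)|^r|f(y)|^p\bigr)^{1/q}\cdot|K(x,y)|^{\,r/\mu}\cdot|f(y)|^{\,p/\nu},
\]
which is legitimate because $\tfrac rq+\tfrac r\mu=1$ and $\tfrac pq+\tfrac p\nu=1$, and apply H\"older with exponents $q,\mu,\nu$ to get, for a.e.\ $x$,
\[
|Af(x)|\le\Bigl(\int_Y|K(x,y)|^r|f(y)|^p\,dy\Bigr)^{1/q}\Bigl(\int_Y|K(x,y)|^r\,dy\Bigr)^{1/\mu}\Bigl(\int_Y|f(y)|^p\,dy\Bigr)^{1/\nu}.
\]
The middle factor is at most $C^{r/\mu}$ by the hypothesis on $K$, and the last is $\|f\|_{L^p(Y)}^{p/\nu}$.

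Next I would raise this to the power $q$, integrate over $X$, and use Tonelli to interchange the order of integration in the remaining double integral:
\[
\int_X|Af(x)|^q\,dx\le C^{qr/\mu}\|f\|_{L^p(Y)}^{qp/\nu}\int_Y|f(y)|^p\Bigl(\int_X|K(x,y)|^r\,dx\Bigr)dy\le C^{\,qr/\mu+r}\|f\|_{L^p(Y)}^{\,qp/\nu+p}.
\]
Taking the $q$-th root and simplifying with $\tfrac1\mu+\tfrac1q=\tfrac1r$ and $\tfrac1\nu+\tfrac1q=\tfrac1p$, the power of $C$ is $r(\tfrac1\mu+\tfrac1q)=1$ and the power of $\|f\|_{L^p(Y)}$ is $p(\tfrac1\nu+\tfrac1q)=1$, giving $\|Af\|_{L^q(X)}\le C\|f\|_{L^p(Y)}$. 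Finiteness of the right-hand side also shows $\int_Y K(x,y)f(y)\,dy$ converges absolutely for a.e.\ $x$, so $Af$ is well defined.

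There is no genuine analytic obstacle here; the one thing that must be done carefully is the bookkeeping of exponents, and in particular the degenerate cases where $\mu$ or $\nu$ equals $\infty$. In those cases the corresponding factor above is simply $1$ (or an essential supremum) and H\"older's three-exponent inequality is applied with the standard $L^\infty$ convention, so the computation above goes through verbatim. I expect verifying these boundary instances — rather than the main estimate — to be the only place where care is required.
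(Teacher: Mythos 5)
Your argument is essentially the paper's own proof: the three-exponent H\"older split you use, with $\mu=p'$ and $\nu=r'$, is exactly the paper's factorization $|K|^{r(1-1/p)}|K|^{r/q}|f|^{p/q}|f|^{p(1-1/r)}$, followed by raising to the $q$-th power, Tonelli, and the same exponent bookkeeping yielding the constant $C$. The only cosmetic difference is at the boundary cases: the paper dispatches $r=1$ by citing Lemma 15.2 of Folland--Stein and $p=1$ by Minkowski's integral inequality, whereas you absorb them into the same computation via the $L^\infty$ convention, which is fine.
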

\begin{proof}
For the case $r=1$, the result reduces to the case of Lemma 15.2 of \cite{Folland-Stein1974}.

If $r>1$, then $1\leq p\leq \frac r{r-1}$. When $p=1$, by Minkowski's inequality,
    $$\left(\int_X |Af(x)|^r dx\right)^{1/r}\leq \int_Y\left(\int_X |K(x,y) f(y)|^r dx\right)^{1/r} dx \leq C\|f\|_{L^1(Y)}.$$
While for $p>1$, noting $(1-\frac 1p)+(1-\frac 1r)+\frac 1q=1$, we have
\begin{align*}
    &|Af(x)|\leq \int_Y |K(x,y)|^{r(1-1/p)} |K(x,y)|^{r/q} |f(y)|^{p/q} |f(y)|^{p(1-1/r)}dy\\
    &\leq \left(\int_Y |K(x,y)|^r dy\right)^{1-1/p} \left(\int_Y |K(x,y)|^r|f(y)|^p dy\right)^{1/q} \left(\int_Y |f(y)|^p dy\right)^{1-1/r},
\end{align*}
and then
\begin{align*}
    &\left(\int_X |Af(x)|^q dx\right)^{1/q}\\
    \leq &\left(\int_Y |K(x,y)|^r dy\right)^{1-1/p} \|f\|_{L^p(Y)}^{p(1-1/r)} \left(\int_X\int_Y|K(x,y)|^r|f(y)|^p dy dx\right)^{1/q}\\
    =& \left(\int_Y |K(x,y)|^r dy\right)^{1/r} \|f\|_{L^p(Y)} \leq C\|f\|_{L^p(Y)}.
\end{align*}
\end{proof}

Since $(G_\xi^\theta(\eta))^{\frac{Q-\alpha}{Q-2}}\sim \rho(\xi,\eta)^{\alpha-Q}$ as $\rho(\xi,\eta)\rightarrow 0$ and $(G_\xi^\theta(\eta))^{\frac{Q-\alpha}{Q-2}} \in C^\infty(M\times M \backslash \{\xi=\eta\})$, then $(G_\xi^\theta(\eta))^{\frac{Q-\alpha}{Q-2}}\leq C\rho(\xi,\eta)^{\alpha-Q}$ %holds on the compact CR manifold $M$,
and $(G_\xi^\theta(\eta))^{\frac{Q-\alpha}{Q-2}}$ satisfies
    $$\int_M |K(\xi,\eta)|^r dV_\theta(\xi)\leq C  \quad\text{and}\quad \int_M |K(\xi,\eta)|^r dV_\theta(\eta)\leq C, \quad \forall 1\leq r <\frac{Q}{Q-\alpha}.$$
So, we take $(G_\xi^\theta(\eta))^{\frac{Q-\alpha}{Q-2}}$ as the kernel of Proposition \ref{pro Young ineq} and have the following subcritical HLS inequalities.
\begin{proposition}\label{pro sub HLS}
There exists a positive constant $C$ such that for any $f\in L^p(M)$ with $1<p<\frac{Q}\alpha$, one has
\begin{equation}\label{roughly sub HLS}
    \|Af(\xi)\|_{L^q(M)}\leq C\|f\|_{L^p(M)},
\end{equation}
where $q>1$ and $\frac 1q>\frac 1p-\frac\alpha{Q}$. Moreover, operator $A$ is compact for any $q$ satisfying $q>1$ and $\frac 1q>\frac 1p-\frac\alpha Q$, namely, for any bounded sequence $\{f_j\}_{j=1}^{+\infty}\subset L^p(M)$, there exists a subsequence of $\{Af_j\}_{j=1}^{+\infty}$ which converges in $L^q(M)$.
\end{proposition}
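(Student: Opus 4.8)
The plan is to split the statement of Proposition~\ref{pro sub HLS} into its two halves: the boundedness of $A$ and its compactness. For the boundedness, I would simply invoke Proposition~\ref{pro Young ineq} with $K(\xi,\eta)=(G_\xi^\theta(\eta))^{\frac{Q-\alpha}{Q-2}}$. Indeed, we have already noted that $K(\xi,\eta)\le C\rho(\xi,\eta)^{\alpha-Q}$, and since $\rho^{\alpha-Q}$ is locally integrable to any power $r<\frac{Q}{Q-\alpha}$ (the homogeneous dimension being $Q$, the singularity $\rho^{r(\alpha-Q)}$ is integrable precisely when $r(Q-\alpha)<Q$), the hypotheses $\int_M|K(\xi,\eta)|^r\,dV_\theta(\xi)\le C^r$ and $\int_M|K(\xi,\eta)|^r\,dV_\theta(\eta)\le C^r$ hold for every such $r$. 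Given $1<p<\frac{Q}{\alpha}$ and $q>1$ with $\frac1q>\frac1p-\frac\alpha Q$, I would choose $r\in[1,\frac{Q}{Q-\alpha})$ so that $\frac1q=\frac1p+\frac1r-1$; this is possible exactly because $\frac1q>\frac1p-\frac\alpha Q=\frac1p+(1-\frac{Q-\alpha}{Q}\cdot\frac{Q}{Q-\alpha})\cdot$\dots more precisely, the admissible range of $\frac1r$ for which Young applies is $(\frac{Q-\alpha}{Q},1]$, and $\frac1q=\frac1p+\frac1r-1$ then sweeps out $(\frac1p-\frac\alpha Q,\frac1p]$, which contains the prescribed $\frac1q$ (after shrinking $q$ if necessary, noting larger $q$ on a finite-measure space follows from smaller). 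One also checks the side condition $1-\frac1r\le\frac1p\le1$, i.e.\ $p\le\frac{r}{r-1}$, which is equivalent to $\frac1q\ge0$ and is automatic. This yields \eqref{roughly sub HLS}.

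For the compactness, the strategy is the standard truncation-and-approximation argument. I would write $K=K_\varepsilon+K^\varepsilon$ where $K_\varepsilon(\xi,\eta)=K(\xi,\eta)\mathbf{1}_{\{\rho(\xi,\eta)\ge\varepsilon\}}$ is the bounded (indeed $C^0$, even $L^\infty$) part away from the diagonal and $K^\varepsilon=K-K_\varepsilon$ is supported in the $\varepsilon$-neighborhood of the diagonal. The operator $A^\varepsilon$ with kernel $K^\varepsilon$ has small operator norm $L^p\to L^q$: applying the boundedness half to $K^\varepsilon$ in place of $K$, its norm is controlled by $\bigl(\sup_\eta\int_M|K^\varepsilon(\xi,\eta)|^r\,dV_\theta(\xi)\bigr)^{1/r}$, which tends to $0$ as $\varepsilon\to0$ since $\rho^{r(\alpha-Q)}$ is integrable. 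On the other hand, $A_\varepsilon$ with the bounded kernel $K_\varepsilon$ maps bounded sets of $L^p(M)$ into equicontinuous, uniformly bounded families in $C^0(M)$: boundedness follows from H\"older since $M$ has finite volume and $K_\varepsilon\in L^\infty$, and equicontinuity follows from continuity (hence uniform continuity, $M\times M$ being compact) of $K_\varepsilon$ away from the diagonal — $|A_\varepsilon f(\xi)-A_\varepsilon f(\xi')|\le\|f\|_{L^p}\,\|K_\varepsilon(\xi,\cdot)-K_\varepsilon(\xi',\cdot)\|_{L^{p'}}$ and the modulus of continuity of $\xi\mapsto K_\varepsilon(\xi,\cdot)\in L^{p'}$ is uniform. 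By Arzel\`a--Ascoli, $A_\varepsilon$ is compact from $L^p(M)$ to $C^0(M)$, hence to $L^q(M)$. Since $A$ is a limit in operator norm of the compact operators $A_\varepsilon$ (as $\|A-A_\varepsilon\|=\|A^\varepsilon\|\to0$), $A$ is compact.

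The main obstacle I anticipate is purely bookkeeping: matching the exponent ranges so that Proposition~\ref{pro Young ineq} is genuinely applicable. One must verify that for the prescribed pair $(p,q)$ there is a valid $r$ — and here the inequality $\frac1q>\frac1p-\frac\alpha Q$ is \emph{strict}, which is exactly what is needed to keep $r$ strictly below the critical exponent $\frac{Q}{Q-\alpha}$ where local integrability of $\rho^{r(\alpha-Q)}$ fails; had the inequality been an equality, one would be in the borderline HLS situation handled separately (it is essentially \eqref{roughly HLS diagonal}, via the sharp results of Folland--Stein, not Young). A minor subtlety is that Young as stated gives $\|Af\|_{L^q}\le C\|f\|_{L^p}$ only for the specific $q$ with $\frac1q=\frac1p+\frac1r-1$; to get it for all larger $q$ in the stated range one uses that $M$ has finite measure, so $L^{q_0}(M)\hookrightarrow L^q(M)$ for $q\le q_0$. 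For the compactness part there is no real difficulty beyond organizing the two estimates; the only place to be slightly careful is that $K_\varepsilon$ is continuous on $M\times M$ — which holds because $G_\xi^\theta(\eta)$ is $C^\infty$ off the diagonal, a fact recorded earlier — so that the equicontinuity claim is legitimate.
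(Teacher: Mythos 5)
Your boundedness half is exactly the paper's: apply Proposition~\ref{pro Young ineq} to the kernel $(G_\xi^\theta(\eta))^{\frac{Q-\alpha}{Q-2}}\le C\rho(\xi,\eta)^{\alpha-Q}$ with $r<\frac{Q}{Q-\alpha}$, the strict inequality $\frac1q>\frac1p-\frac\alpha Q$ being what keeps $r$ subcritical, plus the finite-measure embedding for the remaining exponents. For compactness your route genuinely differs from the paper's, though both hinge on the same key estimate. The paper fixes a bounded sequence, extracts $f_j\rightharpoonup f$ in $L^p$, splits $K=K^s+K_s$ at distance $s$ from the diagonal, shows $\int K^s f_j\to\int K^s f$ in $L^q$ by pointwise (weak) convergence plus dominated convergence, and controls the near-diagonal piece by Young with $\bigl(\int|K_s|^r\bigr)^{1/r}\le Cs^{\beta}$; you instead prove that the truncated operator $A_\varepsilon$ is compact $L^p\to C^0(M)\to L^q(M)$ via Arzel\`a--Ascoli and conclude by writing $A$ as an operator-norm limit of compact operators, the norm of the near-diagonal remainder being small by the very same Young estimate. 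Your version is slightly more structural (it gives compactness of the operator outright, without choosing a weak limit first), while the paper's sequential argument avoids any regularity discussion of the truncated kernel. On that last point, one small repair is needed in your write-up: with the sharp cutoff $\mathbf{1}_{\{\rho\ge\varepsilon\}}$ the kernel $K_\varepsilon$ is \emph{not} continuous on $M\times M$ (it jumps across $\{\rho=\varepsilon\}$), so the equicontinuity step should either use a continuous cutoff $\chi(\rho/\varepsilon)$ in place of the indicator, or argue directly that $\xi\mapsto K_\varepsilon(\xi,\cdot)$ is continuous into $L^{p'}(M)$ because the symmetric difference of the sets $\{\rho(\xi,\cdot)\ge\varepsilon\}$ and $\{\rho(\xi',\cdot)\ge\varepsilon\}$ has small measure and $K$ is bounded there; with that adjustment your argument is complete.
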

\begin{proof}
Obviously, it is sufficient to prove the compactness of the operator $A$ with kernel $K(\xi,\eta)=\rho(\xi,\eta)^{\frac{Q-\alpha}{Q-2}}$.

Since the sequence $\{f_j\}$ is bounded in $L^p(M)$, then there exists a subsequence (still denoted by $\{f_j\}$) and a function $f\in L^p(M)$ such that
\begin{equation}\label{comp 1}
    f_j\rightharpoonup f \quad\text{weakly in}\quad L^p(M).
\end{equation}
%Firstly, we will prove that
%\begin{equation}\label{comp 2}
%    \int_M E(\xi,\eta)f_j(\eta) dV_\eta\rightarrow \int_M E(\xi,\eta)f(\eta) dV_\eta, \quad\text{in}\quad L^q(M).
%\end{equation}
%In fact, because $E(\xi,\eta)$ is bounded on $M$, we deduce easily that $\int_M E(\xi,\eta)f_j(\eta) dV_\eta$ converges pointwise to $\int_M E(\xi,\eta)f(\eta) dV_\eta$ and
%    $$\left|\int_M E(\xi,\eta)f_j(\eta) dV_\eta\right| \leq C\|f_j\|_{L^p(M)}\leq C.$$
%Then, we get \eqref{comp 2} by the dominated convergence.

For $K(\xi,\eta)$, we decompose the integral operator as
    $$\int_M K(\xi,\eta) f_j(\eta) dV_\theta(\eta) \triangleq\int_M K^s(\xi,\eta) f_j(\eta) dV_\theta(\eta)+\int_M K_s(\xi,\eta) f_j(\eta) dV_\theta(\eta),$$
where
    $$K^s(\xi,\eta)=\begin{cases}K(\xi,\eta), & \text{if}\quad \rho(\xi,\eta)>s,\\ 0 & \text{others}\end{cases}$$
and $K_s(\xi,\eta)=K(\xi,\eta)-K^s(\xi,\eta)$, $s>0$ will be chosen later. Noting that, for any $\xi$,
$\int_M |K^s(\xi,\eta)|^{p/(p-1)} dV_\theta(\eta)\leq C_0$, where $C_0$ is independent on $\xi$, we know that $\int_M K^s(\xi,\eta) f_j(\eta) dV_\theta(\eta)$ converges pointwise to $\int_M K^s(\xi,\eta) f(\eta) dV_\theta(\eta)$ and $\int_M K^s(\xi,\eta) f_j(\eta) dV_\theta(\eta)$ is bounded uniformly. So, it is deduced by the dominated convergence that
\begin{equation}\label{comp 3}
    \int_M K^s(\xi,\eta) f_j(\eta) dV_\eta\rightarrow \int_M K^s(\xi,\eta) f(\eta) dV_\eta, \quad\text{in}\quad L^q(M).
\end{equation}

Next, we analyze the convergence of $\int_M K_s(\xi,\eta) f_j(\eta) dV_\eta$ by Young's inequality (Proposition \ref{pro Young ineq}). Take $r$ satisfying $\frac 1r=\frac 1q-\frac 1p+1$. Then, $r<\frac{Q}{Q-\alpha}$ and
    $$\left(\int_M |K_s(\xi,\eta)|^r dv_\eta\right)^{1/r} \leq Cs^\beta$$
with $\beta=Q(\frac 1r-\frac{Q-\alpha}Q)$. By the Young's inequality, we have
\begin{align}\label{comp 4}
    &\|\int_M K_s(\xi,\eta) (f_j(\eta)-f(\eta)) dV_\eta\|_{L^q(M)}\nonumber\\
    \leq &\|f_j-f\|_{L^p(M)}\left(\int_M |K_s(\xi,\eta)|^r dV_\eta\right)^{1/r}\leq Cs^\beta.
\end{align}
By now, through choosing first $s$ small and then $j$ large, we deduce by \eqref{comp 3} and \eqref{comp 4} that
    $$\int_M K(\xi,\eta) f_j(\eta) dV_\eta\rightarrow \int_M K(\xi,\eta) f(\eta) dV_\eta, \quad\text{in}\quad L^q(M).$$
So, we complete the proof.
\end{proof}

\medskip
Define the extremal problem as
\begin{align}\label{extremal sub}
    D_{M,p,q} :=& \sup_{f\in L^{p}(M)\setminus\{0\}} \frac{\|Af\|_{L^q(M)}}{\|f\|_{L^p(M)}}\nonumber\\
    =&\sup_{\substack{f\in L^{p}(M)\setminus\{0\}\\ g\in L^{q'}(M)\setminus\{0\}}} \frac{|\int_M\int_M g(\xi) \bigl(G^\theta_\xi(\eta)\bigr)^{\frac{Q-\alpha}{Q-2}} f(\eta) dV_\theta(\eta) dV_\theta(\xi)|}{\|f\|_{L^p(M)} \|g\|_{L^{q'}(M)}},
\end{align}
where $1<p<\frac Q\alpha$, $1>\frac 1q>\frac 1p-\frac\alpha Q$ and $q'=\frac q{q-1}$. Obviously, we know that $D_{M,p,q}<+\infty$ because of Proposition \ref{pro sub HLS}. Moreover, we have
\begin{theorem}\label{thm extremal sub}
$D_{M,p,q}$ can be attained, namely, there exists a nonnegative function $f\in L^p(M)$ satisfying $\|f\|_{L^p(M)}=1$ and $D_{M,p,q}=\|Af\|_{L^q(M)}.$
\end{theorem}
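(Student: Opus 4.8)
The plan is to run the direct method in the calculus of variations, letting the compactness statement of Proposition \ref{pro sub HLS} do the essential work. First I would note that $D_{M,p,q}>0$: since $\mathcal{Y}(M)>0$ the Green function $G^\theta_\xi$ is positive, so the kernel $\bigl(G^\theta_\xi(\eta)\bigr)^{\frac{Q-\alpha}{Q-2}}$ is a positive measurable function and $\|Af\|_{L^q(M)}>0$ for any nonzero nonnegative $f\in L^p(M)$. Then choose a maximizing sequence $\{f_j\}\subset L^p(M)$ with $\|f_j\|_{L^p(M)}=1$ and $\|Af_j\|_{L^q(M)}\to D_{M,p,q}$. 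Since $1<p<\infty$, $L^p(M)$ is reflexive, so after passing to a subsequence we may assume $f_j\rightharpoonup f$ weakly in $L^p(M)$; by weak lower semicontinuity of the norm, $\|f\|_{L^p(M)}\le 1$. Because $A: L^p(M)\to L^q(M)$ is compact (Proposition \ref{pro sub HLS}) in the admissible range $1<p<Q/\alpha$, $1>1/q>1/p-\alpha/Q$, a further subsequence gives $Af_j\to Af$ strongly in $L^q(M)$, whence $\|Af\|_{L^q(M)}=\lim_j\|Af_j\|_{L^q(M)}=D_{M,p,q}>0$; in particular $f\not\equiv 0$.

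Next I would upgrade the norm bound to an equality and arrange nonnegativity. If $\lambda:=\|f\|_{L^p(M)}\in(0,1)$, then $f/\lambda$ would give ratio $D_{M,p,q}/\lambda>D_{M,p,q}$ in \eqref{extremal sub}, contradicting the definition of the supremum; hence $\|f\|_{L^p(M)}=1$ and $D_{M,p,q}=\|Af\|_{L^q(M)}$ is attained. For nonnegativity, positivity of the kernel yields the pointwise inequality $|Af|\le A|f|$, so $\|A|f|\|_{L^q(M)}\ge\|Af\|_{L^q(M)}=D_{M,p,q}$ while $\||f|\|_{L^p(M)}=\|f\|_{L^p(M)}=1$; thus $|f|\ge 0$ is again a maximizer and may be taken as the extremal function.

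Since reflexivity of $L^p$, weak lower semicontinuity of the norm, and compactness of $A$ are all either classical or already established, I do not expect a serious obstacle here; the only point that could be delicate in such arguments — nonvanishing of the weak limit — is automatic in the subcritical regime, because $D_{M,p,q}>0$ forces $Af\neq 0$ after the strong convergence $Af_j\to Af$. If one prefers the symmetric bilinear formulation in \eqref{extremal sub}, the same reasoning applies after additionally extracting a weak limit in $L^{q'}(M)$ of a maximizing sequence for $g$ and using compactness of $A$ to pass to the limit in the pairing.
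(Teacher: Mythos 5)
Your proposal is correct and follows essentially the same route as the paper: a maximizing sequence, weak convergence in the reflexive space $L^p(M)$, strong convergence of $Af_j$ via the compactness in Proposition \ref{pro sub HLS}, and weak lower semicontinuity of the norm to conclude the limit is a maximizer. The only differences are cosmetic — you make explicit the nonvanishing of the weak limit and the reduction to a nonnegative maximizer, which the paper handles by choosing the maximizing sequence nonnegative at the outset.
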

\begin{proof}
Without loss of generality, choose a nonnegative maximizing sequence $\{f_j\}_{j=1}^{+\infty}\subset L^p(M)$ such that $\|f_j\|_{L^p(M)}=1$ and
    $$\lim_{j\rightarrow +\infty} \|Af_j\|_{L^q(M)}=D_{M,p,q}.$$
Combining the boundedness of the sequence $\{f_j\}$ in $L^p(M)$ and the compactness result (see Proposition \ref{pro sub HLS}), there exists a subsequence (still denoted by $\{f_j\}$) and a function $f\in L^p(M)$ such that
\begin{gather*}
    f_j\rightharpoonup f \quad\text{weakly in}\quad L^p(M),\\
    Af_j\rightarrow Af \quad\text{strongly in}\quad L^q(M).
\end{gather*}
Thus $\|f\|_{L^p(M)}\leq\liminf_{j\rightarrow +\infty}\|f_j\|_{L^p(M)}$ and
    $$D_{M,p,q}=\lim_{j\rightarrow +\infty} \frac{\|Af_j\|_{L^q(M)}}{\|f_j\|_{L^p(M)}} \leq \frac{\|Af\|_{L^q(M)}}{\|f\|_{L^p(M)}}.$$
So, $f$ is a maximizer.
\end{proof}

\subsection{Subcritical HLS inequalities for the diagonal case}

To discuss the extremal problem related to $\mathcal{Y}_\alpha(M)$, we need only to consider the diagonal case. Hence, corresponding to Theorem \ref{thm extremal sub}, we have
\begin{theorem}\label{thm extremal sub diagonal}
For any $f\in L^p(M)$ with $\frac{2Q}{Q+\alpha}<p<2$, it holds
\begin{equation*}
    |\int_M\int_M f(\xi)f(\eta) \bigl(G^\theta_\xi(\eta)\bigr)^{\frac{Q-\alpha}{Q-2}} dV_\theta(\xi) dV_\theta(\eta)|\leq D_{M,p}\|f\|_{L^p(M)}^2,
\end{equation*}
where the sharp constant
    $$D_{M,p}:=\sup_{f\in L^{p}(M)\setminus\{0\}} \frac{|\int_M\int_M f(\xi) f(\eta) \bigl(G^\theta_\xi(\eta)\bigr)^{\frac{Q-\alpha}{Q-2}} dV_\theta(\xi) dV_\theta(\eta)|}{\|f\|_{L^p(M)}^2}$$
can be attained by some nonnegative function $f_p\in L^p(M)$ satisfying $\|f_p\|_{L^p(M)}=1$ and
    $$D_{M,p}= \int_M\int_M f(\xi) f(\eta) \bigl(G^\theta_\xi(\eta)\bigr)^{\frac{Q-\alpha}{Q-2}} dV_\theta(\xi) dV_\theta(\eta).$$
\end{theorem}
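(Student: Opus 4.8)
The plan is to reduce Theorem \ref{thm extremal sub diagonal} to the already-proven Theorem \ref{thm extremal sub}, exploiting the symmetry of the kernel $\bigl(G^\theta_\xi(\eta)\bigr)^{\frac{Q-\alpha}{Q-2}}$ under interchange of $\xi$ and $\eta$ — which holds at least locally near the diagonal, and which we may assume globally since the Green function is symmetric (the relation $G^{\tilde\theta}_\xi(\eta)=\phi^{-1}(\xi)\phi^{-1}(\eta)G^\theta_\xi(\eta)$ together with self-adjointness of $\mathcal{L}_\theta$ forces $G^\theta_\xi(\eta)=G^\theta_\eta(\xi)$). First I would observe that the parameter range $\frac{2Q}{Q+\alpha}<p<2$ is exactly the range in which the dual exponent $q=p'=\frac{p}{p-1}$ satisfies $1<q<\frac{2Q}{Q-\alpha}$, hence $\frac1q>\frac1p-\frac\alpha Q$; thus the bilinear form is well-defined and finite and $D_{M,p}\le D_{M,p,p'}<\infty$ by Proposition \ref{pro sub HLS} (with the symmetric kernel the operator $A$ maps $L^p$ to $L^{p'}$ boundedly). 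Second, I would note the trivial bound $D_{M,p}\le D_{M,p,p'}$, obtained by restricting the supremum in \eqref{extremal sub} to the diagonal $g=f$ (after normalizing $\|f\|_{L^p}=\|f\|_{L^{q'}}=1$, which forces $q'=p$, i.e. $q=p'$).

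The substantive point is the reverse inequality and the attainment. For attainment, I would run the same compactness argument as in the proof of Theorem \ref{thm extremal sub}: take a nonnegative maximizing sequence $\{f_j\}$ with $\|f_j\|_{L^p(M)}=1$; by Proposition \ref{pro sub HLS} the operator $A$ is compact from $L^p(M)$ into $L^{p'}(M)$, so passing to a subsequence we get $f_j\rightharpoonup f$ weakly in $L^p(M)$ and $Af_j\to Af$ strongly in $L^{p'}(M)$. Then
\[
\int_M\int_M f_j(\xi)f_j(\eta)\bigl(G^\theta_\xi(\eta)\bigr)^{\frac{Q-\alpha}{Q-2}}\,dV_\theta(\xi)\,dV_\theta(\eta)=\int_M f_j\,(Af_j)\,dV_\theta\longrightarrow\int_M f\,(Af)\,dV_\theta,
\]
because $f_j\rightharpoonup f$ weakly in $L^p$ while $Af_j\to Af$ strongly in $L^{p'}$ (the product of a weakly convergent sequence and a strongly convergent sequence in dual spaces converges). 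Hence the bilinear form attains the value $D_{M,p}$ at the limit $f$. Since $\|f\|_{L^p(M)}\le\liminf\|f_j\|_{L^p(M)}=1$, and the bilinear form is positive (the kernel is positive), we get
\[
D_{M,p}=\int_M f\,(Af)\,dV_\theta\le D_{M,p}\,\|f\|_{L^p(M)}^2\le D_{M,p},
\]
which forces $\|f\|_{L^p(M)}=1$ and shows $f$ is a genuine maximizer; replacing $f$ by $|f|$ only increases the bilinear form (positive kernel), so we may take $f=f_p\ge 0$.

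The main obstacle I expect is not analytic but bookkeeping: one must be careful that the symmetric-kernel operator $A\colon L^p\to L^{p'}$ is indeed compact in the stated range, which is not literally Proposition \ref{pro sub HLS} as printed (there $q$ is only required to satisfy $\frac1q>\frac1p-\frac\alpha Q$, and $q=p'$ does lie in that range precisely when $p<2$ — one should double check the endpoint $p=2$ is correctly excluded, since then $q=2$ and $\frac12=\frac12$ fails the strict inequality). A second point requiring a line of justification is that the Green function kernel is globally symmetric on $M\times M$, not merely near the diagonal; I would cite the self-adjointness of $\mathcal{L}_\theta$ (so its inverse, hence its Schwartz kernel $G^\theta_\xi(\eta)$, is symmetric) to dispatch this. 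Finally, to identify the Euler–Lagrange equation \eqref{curvature alpha 2} for $f_p$ one differentiates the functional; but the statement as given only asks for attainment, so I would defer the variational characterization to the subsequent section on the subcritical problem. With these caveats the proof is a routine transcription of the proof of Theorem \ref{thm extremal sub} with $g$ set equal to $f$ and $q=p'$.
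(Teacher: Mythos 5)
Your proposal is essentially the paper's own route: the paper offers no separate proof of this theorem, treating it as the diagonal instance of Theorem \ref{thm extremal sub}, i.e.\ the same direct method you use — nonnegative maximizing sequence, compactness of $A$ from Proposition \ref{pro sub HLS} with $q=p'$, and passage to the limit in $\int_M f_j\,(Af_j)\,dV_\theta$ via weak convergence of $f_j$ in $L^p$ together with strong convergence of $Af_j$ in $L^{p'}$ — and it is correct. One harmless slip in your aside: at $p=2$ the pair $q=p'=2$ still satisfies the strict condition $\frac1q>\frac1p-\frac{\alpha}{Q}$ (it reads $\frac12>\frac12-\frac{\alpha}{Q}$), so the exclusion of $p=2$ comes from the later regularity/blow-up arguments rather than from Proposition \ref{pro sub HLS}; your proof is unaffected since admissibility of $q=p'$ only requires $p>\frac{2Q}{Q+\alpha}$.
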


\begin{remark}
A direct computation deduces that the extremal function $f_p$ satisfies the Euler-Lagrange equation
\begin{equation}\label{extremal equ2 sub}
    2D_{M,p} f^{p-1}(\xi)=\int_M f(\eta) \bigl(G^\theta_\xi(\eta)\bigr)^{\frac{Q-\alpha}{Q-2}} dV_\theta(\eta) +\int_M f(\eta) \bigl(G^\theta_\eta(\xi)\bigr)^{\frac{Q-\alpha}{Q-2}} dV_\theta(\eta).
\end{equation}
Denoted by $g(\xi)=f^{p-1}(\xi)$. Then \eqref{extremal equ2 sub} reduces to
\begin{equation}\label{extremal equ1 sub}
   2D_{M,p} g(\xi)=\int_M g(\eta)^{q-1} \bigl(G^\theta_\xi(\eta)\bigr)^{\frac{Q-\alpha}{Q-2}} dV_\theta(\eta) +\int_M g(\eta)^{q-1} \bigl(G^\theta_\eta(\xi)\bigr)^{\frac{Q-\alpha}{Q-2}} dV_\theta(\eta),
\end{equation}
where $q=\frac p{p-1}$ is the conjugate exponent of $p$.
\end{remark}

By a classical routine, we have the following regularity result.

\begin{proposition}[$\Gamma_\alpha$ regularity]\label{pro regularity alpha}
If $g(\xi)\in L^p(M)$ satisfies \eqref{extremal equ1 sub}, then $g\in\Gamma_\alpha(M)$.
\end{proposition}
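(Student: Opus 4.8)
The plan is a two–step bootstrap of the standard type: first improve the integrability of $g$ all the way to $L^\infty(M)$ using Proposition \ref{pro sub HLS} (equivalently, the HLS/Young estimate of Proposition \ref{pro Young ineq}) together with the subcritical range of $p$, and then upgrade $L^\infty(M)$ to $\Gamma_\alpha(M)$ via the Folland--Stein regularity theory \cite{Folland-Stein1974, JL1987} for potential operators whose kernel has diagonal singularity $\rho(\xi,\eta)^{\alpha-Q}$. Throughout, write the right-hand side of \eqref{extremal equ1 sub} as $T_1\phi+T_2\phi$ with $\phi=g^{q-1}$, where $T_1$ (resp. $T_2$) is the integral operator with kernel $(G^\theta_\xi(\eta))^{\frac{Q-\alpha}{Q-2}}$ (resp. $(G^\theta_\eta(\xi))^{\frac{Q-\alpha}{Q-2}}$); by Theorem \ref{thm 4.3 of JL} and the parametrix construction both kernels are $C^\infty$ off the diagonal and $\asymp\rho(\xi,\eta)^{\alpha-Q}$ near it, with $\rho$ symmetric, so both $T_1$ and $T_2$ satisfy the hypotheses and conclusion of Proposition \ref{pro sub HLS}.

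\emph{Step 1: $g\in L^\infty(M)$.} One may start from $g\in L^q(M)$ (this is what is effectively available, since $g=f_p^{p-1}$ with $f_p\in L^p(M)$, so $\|g\|_{L^q}=1$). If $g\in L^s(M)$ with $q\le s<\tfrac{Q(q-1)}\alpha$, then $\phi=g^{q-1}\in L^{s/(q-1)}(M)$ with $1<\tfrac{s}{q-1}<\tfrac Q\alpha$, and Proposition \ref{pro sub HLS} gives $g=\tfrac1{2D_{M,p}}(T_1+T_2)\phi\in L^{s'}(M)$ for every $s'$ with $\tfrac1{s'}>\tfrac{q-1}{s}-\tfrac\alpha Q$. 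The subcritical hypothesis $\tfrac{2Q}{Q+\alpha}<p$, equivalently $q<\tfrac{2Q}{Q-\alpha}$, forces $\tfrac1q<\tfrac\alpha{Q(q-2)}$, which is exactly the condition under which the exponent can be strictly increased at each stage; an elementary induction then shows these exponents are strictly increasing and exceed $\tfrac{Q(q-1)}\alpha$ after finitely many steps (if $q\le\tfrac Q{Q-\alpha}$ this already holds with $s=q$). Once $\phi\in L^a(M)$ with $a'<\tfrac Q{Q-\alpha}$, Hölder's inequality against $(G^\theta_\xi(\cdot))^{\frac{Q-\alpha}{Q-2}}\in L^{a'}(M)$ uniformly in $\xi$ yields $T_i\phi\in L^\infty(M)$, hence $g\in L^\infty(M)$ and $\phi=g^{q-1}\in L^\infty(M)$.

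\emph{Step 2: $g\in\Gamma_\alpha(M)$.} With $\phi\in L^\infty(M)$ and using \eqref{Green}, near the diagonal the kernels split as $(G^\theta_\xi(\eta))^{\frac{Q-\alpha}{Q-2}}=\rho(\xi,\eta)^{\alpha-Q}+(\text{lower-order terms})$, with the Folland--Stein-coordinate derivative bounds $|X^J_\xi\rho(\xi,\eta)^{\alpha-Q}|\le C\,\rho(\xi,\eta)^{\alpha-Q-l(J)}$, and everything $C^\infty$ away from the diagonal. For $0<\alpha<1$ one estimates $|T_i\phi(\xi_1)-T_i\phi(\xi_2)|\le\|\phi\|_{L^\infty}\int_M|K_i(\xi_1,\eta)-K_i(\xi_2,\eta)|\,dV_\theta(\eta)$ by splitting $M$ into the $\rho$-ball of radius $\sim\rho(\xi_1,\xi_2)$ about $\xi_1$ (contributing $O(\rho(\xi_1,\xi_2)^\alpha)$, since $Q-\alpha<Q$) and its complement (same order via the gradient bound), so $T_i\phi\in\Gamma_\alpha(M)$. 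For $\alpha\ge1$ one differentiates under the integral sign $\lfloor\alpha\rfloor$ times, using that $X^J_\xi$ applied to $K_i$ yields a kernel of order $\alpha-l(J)$, and applies the same estimate to the top derivatives to obtain $\Gamma_{\alpha-\lfloor\alpha\rfloor}$ regularity for them, i.e. $T_i\phi\in\Gamma_\alpha(M)$. Hence $g=\tfrac1{2D_{M,p}}(T_1+T_2)\phi\in\Gamma_\alpha(M)$; by Proposition \ref{thm 20.1 of FS} this in particular gives classical Hölder regularity.

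The bootstrap of Step 1 is routine. The delicate point — and the reason the proof invokes the full Folland--Stein kernel calculus rather than an elementary estimate — is extracting the \emph{sharp} Hölder order $\alpha$ in Step 2 when $\alpha\ge1$: there one must genuinely exploit the off-diagonal smoothness and the order-$(\alpha-l(J))$ structure of the differentiated kernels, together with the $C^1$ regularity of $w$ recorded after \eqref{Green}, rather than being content with the $\Gamma_{\min(\alpha,1)}$ bound that the naive argument produces.
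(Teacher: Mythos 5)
Your two-step scheme---bootstrapping \eqref{extremal equ1 sub} to $g\in L^\infty(M)$ via the subcritical estimates of Proposition \ref{pro sub HLS}, then upgrading $L^\infty$ to $\Gamma_\alpha(M)$ by kernel estimates in Folland--Stein normal coordinates---is exactly the paper's route: it reduces the proposition to Lemma \ref{lem regularity 1} (the $L^\infty$ bound) and Lemma \ref{lem regularity 2} ($L^\infty\Rightarrow\Gamma_\alpha$), with the details delegated to Lemma 4.3 of \cite{Han-arxiv}. Your proposal simply fills in the iteration and the near/far kernel-splitting that the paper omits by citation, so it is correct and essentially the same proof.
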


The proof can be completed by the following two Lemmas.

\begin{lemma}\label{lem regularity 2}
If $f\in L^\infty(M)$, then $Af\in \Gamma_\alpha(M)$.
\end{lemma}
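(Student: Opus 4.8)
The plan is to reduce the assertion $Af\in\Gamma_\alpha(M)$ to kernel‑difference estimates and then appeal to the Folland–Stein theory of the Heisenberg Riesz potential. Set $K(\xi,\eta)=(G_\xi^\theta(\eta))^{\frac{Q-\alpha}{Q-2}}$, so that $Af(\xi)=\int_M K(\xi,\eta)f(\eta)\,dV_\theta(\eta)$; since $K(\xi,\eta)\le C\rho(\xi,\eta)^{\alpha-Q}$ and $\alpha-Q>-Q$, the kernel is in $L^1$ in $\eta$ uniformly in $\xi$, so $Af\in L^\infty(M)$ is well defined. Fix a finite cover $\{U_i\}$ of $M$ by relatively compact normal‑coordinate neighbourhoods with a subordinate partition of unity and split $K=\chi K+(1-\chi)K$, where $\chi$ equals $1$ near the diagonal and is supported so close to it that $\{\eta:\chi(\xi,\eta)\ne0\}$ lies in a single chart for every $\xi$. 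Because $K\in C^\infty(M\times M\setminus\{\xi=\eta\})$ and $M$ is compact, $(1-\chi)K\in C^\infty(M\times M)$, so $\xi\mapsto\int_M(1-\chi)Kf\,dV_\theta$ is $C^\infty$ by differentiation under the integral sign and in particular belongs to $\Gamma_\alpha(M)$. It remains to prove that the near‑diagonal piece lies in $\Gamma_\alpha(U_i)$ for each $i$.

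On a chart I would pull the estimate back to a neighbourhood of $0\in\mathbb{H}^n$ via Folland–Stein normal coordinates $\Theta_{\xi_0}$ (Theorem \ref{thm 4.3 of JL}). Using \eqref{Green} and $Q-2=2n$ one gets, near the diagonal,
\begin{equation*}
    K(\xi,\eta)=\rho(\xi,\eta)^{\alpha-Q}+R(\xi,\eta),\qquad R(\xi,\eta)=O\bigl(\rho(\xi,\eta)^{\alpha-2}\bigr),
\end{equation*}
the remainder carrying a $C^1$ coefficient; moreover $dV_\theta$ pulls back to $(1+O^1)\,\theta_0\wedge d\theta_0^n$ and $\rho(\xi,\eta)$ agrees, up to a factor $1+O^1$, with the Heisenberg norm of $\Theta_{\xi_0}(\xi)^{-1}\Theta_{\xi_0}(\eta)$. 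Hence, modulo strictly less singular terms, the near‑diagonal piece of $A$ is the convolution operator $F\mapsto\int_{\mathbb{H}^n}|v^{-1}u|^{\alpha-Q}F(v)\,dV_0(v)$ acting on compactly supported $F\in L^\infty$.

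For this model operator I would invoke the Folland–Stein fractional‑integration estimates (in the spirit of Theorem 15.11 of \cite{Folland-Stein1974} and Proposition \ref{thm 20.1 of FS}): convolution against a kernel homogeneous of degree $\alpha-Q$ and smooth away from the origin carries compactly supported $L^\infty$ functions into $\Gamma_\alpha$ locally. Concretely, when $0<\alpha<1$ one estimates $\int_{\mathbb{H}^n}\bigl|\,|u_1^{-1}v|^{\alpha-Q}-|u_2^{-1}v|^{\alpha-Q}\bigr|\,dV_0(v)$ by splitting the integral at the scale $\rho(u_1,u_2)$: on the near region each term is separately integrable and contributes $O(\rho(u_1,u_2)^{\alpha})$, while on the far region the mean‑value bound $\bigl|\,|u_1^{-1}v|^{\alpha-Q}-|u_2^{-1}v|^{\alpha-Q}\bigr|\le C\,\rho(u_1,u_2)\,|u_1^{-1}v|^{\alpha-Q-1}$ again yields $O(\rho(u_1,u_2)^{\alpha})$; when $\alpha\ge1$ one first differentiates under the integral $\lceil\alpha\rceil-1$ times, each horizontal derivative $X_j$ lowering the homogeneity of the kernel by one, and then runs the same near/far splitting — in the symmetric‑difference form of the $\Gamma_1$‑norm when $\alpha$ is an integer — on the resulting, still locally integrable, kernel of order $\alpha-\lceil\alpha\rceil+1\in(0,1]$. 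Transferring back through the chart, the $1+O^1$ distortions of measure and distance and the remainder $R$ — which corresponds to a fractional integration of order at least $Q-2+\alpha>\alpha$ — add only terms that are at least as regular, so the near‑diagonal piece of $Af$ lies in $\Gamma_\alpha(U_i)$. Together with the smooth far piece this gives $Af\in\Gamma_\alpha(M)$.

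The step I expect to be the main obstacle is the transfer in the second paragraph: since the pseudohermitian structure is not flat, $\rho(\xi,\eta)$ is not literally a function of $\Theta_{\xi_0}(\xi)^{-1}\Theta_{\xi_0}(\eta)$, so the pulled‑back operator is a genuine variable‑coefficient perturbation of a convolution rather than a convolution, and one must use the precise normal‑coordinate expansions of Theorem \ref{thm 4.3 of JL} together with the $C^1$‑control of $w$ to verify that this discrepancy, and the $\lceil\alpha\rceil-1$ horizontal derivatives one takes of it, produce only kernels whose $L^1$‑norm in $\eta$ is controlled and that are no less regular than the model kernel $|v^{-1}u|^{\alpha-Q}$. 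Once that bookkeeping is in place, the rest is routine.
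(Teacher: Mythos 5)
Your argument is essentially the paper's: the paper's proof consists precisely of the remark that, in Folland--Stein normal coordinates, one repeats the second part of Lemma 4.3 of \cite{Han-arxiv} (the Heisenberg Riesz-potential estimate for the kernel $|v^{-1}u|^{\alpha-Q}$), i.e., exactly the near/far splitting, the model-kernel difference estimates, and the treatment of the $(1+O^1)$ distortions that you spell out, with all details omitted. The only place where you are no more complete than the paper is the perturbation bookkeeping you flag yourself: once $\lceil\alpha\rceil-1\ge 1$ horizontal derivatives are taken, the $O(\rho^{\alpha-2})$ remainder coming from the regular part $w$ involves derivatives of $w$ beyond the standing $C^1(M\times M)$ assumption, so the claim that this piece is ``at least as regular'' requires care (a caveat inherent in the paper's own two-line citation as well).
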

\begin{proof}
Because of the compactness of $M$, it is sufficient to prove that, for any $\xi\in M$, Lemma \ref{lem regularity 2} holds on the neighbourhood $V_\xi$. Hence, without loss of generality, we restrict variable $\xi$ on a neighbourhood $V_{\xi_0}$ for some point $\xi_0\in M$.

Using the Folland-Stein normal coordinates, we can complete the proof by a similar process of the second part of Lemma 4.3 of \cite{Han-arxiv}. For concise, we omit the details.
%
%Since $E(\xi,\eta)$ is smooth on $M\times M$, we know that $\int_M E(\xi,\eta)f(\eta) dV_\eta$ is smooth on $M$. For the part $\int_M k(\xi,\eta) f(\eta) dV_\eta$, noting $K(\xi,\eta)$ with compact support, we can prove $\int_M k(\xi,\eta) f(\eta) dV_\eta\in \Gamma_\alpha(V_{\xi_0})$ by a similar process of the second part of Lemma 4.3 of \cite{Han-Dou}. So, we complete the proof.
\end{proof}

\begin{lemma}\label{lem regularity 1}
If $g(\xi)\in L^p(M)$, $2<p<\frac{2Q}{Q-\alpha}$, satisfies \eqref{extremal equ1 sub}, then $g\in L^\infty(M)$.
\end{lemma}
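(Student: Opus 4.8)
The plan is to run a bootstrap argument on equation \eqref{extremal equ1 sub}, raising the exponent in which $g$ lives until it reaches $L^\infty$, exactly in the spirit of the classical regularity lifting for integral equations with a Riesz-type kernel. The key structural fact is that the kernel $(G^\theta_\xi(\eta))^{\frac{Q-\alpha}{Q-2}}$ is comparable to $\rho(\xi,\eta)^{\alpha-Q}$ near the diagonal and smooth away from it, so the operator $A$ (and its transpose, which appears as the second term on the right of \eqref{extremal equ1 sub}) maps $L^s(M)$ into $L^t(M)$ whenever $\frac1t > \frac1s - \frac\alpha Q$ and is bounded on the endpoints as well by the Young's inequality of Proposition \ref{pro Young ineq}; compactness from Proposition \ref{pro sub HLS} is not even needed here, only the mapping property. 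Since $g \in L^p$ with $p > 2$, we have $g^{q-1} \in L^{p/(q-1)}$, and because $q = p/(p-1)$ one computes $q - 1 = 1/(p-1)$, so $g^{q-1} \in L^{p(p-1)}$. One then feeds this into $A$ and its transpose to conclude $g \in L^{p_1}$ with $\frac1{p_1} = \frac1{p(p-1)} - \frac\alpha Q$ (or $p_1 = \infty$ if this is already negative).

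The main step is to verify that the exponents strictly increase at each stage and that the iteration terminates after finitely many steps. Writing $s_k$ for the exponent in which $g$ lives at stage $k$ (so $s_0 = p$), the recursion is $\frac1{s_{k+1}} = \frac{q-1}{s_k} - \frac\alpha Q = \frac1{(p-1)s_k} - \frac\alpha Q$. Since $p > 2$ gives $\frac1{p-1} < 1$, the map $x \mapsto \frac{x}{p-1} - \frac\alpha Q$ is a contraction on $[0,\infty)$ pushing every positive $x$ strictly downward toward its fixed point $x_* = -\frac{\alpha(p-1)}{Q(p-2)} < 0$; hence $\frac1{s_k}$ decreases and must become negative in finitely many steps, at which point the corresponding $A$-image is in $L^\infty$. (One should be slightly careful at the borderline where $\frac1{s_{k+1}}$ lands exactly at $0$ or where Young's inequality needs the non-strict endpoint — there one inserts an $\varepsilon$ loss, replacing $\frac\alpha Q$ by $\frac\alpha Q - \varepsilon$, which does not affect the conclusion since the gap to the fixed point is bounded below.) I would also record at the outset that $2 < p < \frac{2Q}{Q-\alpha} = p_\alpha$ guarantees $r := \frac{q-1}{1} \cdot \frac{s_k}{s_k}$-type exponents stay within the range $1 \le r < \frac Q{Q-\alpha}$ required by Proposition \ref{pro Young ineq}, so the hypotheses are met at every stage.

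Once $g \in L^\infty(M)$ is established, the lemma is done; note that combined with Lemma \ref{lem regularity 2} this immediately upgrades $g$ to $\Gamma_\alpha(M)$, which is Proposition \ref{pro regularity alpha}. The only genuine obstacle I anticipate is bookkeeping the endpoint cases of Young's inequality and ensuring the first iterate is legitimate — i.e., that $g^{q-1} \in L^{p(p-1)}$ with $p(p-1)$ in the admissible range — but since $p > 2$ makes $p(p-1) > p > 2 > 1$ and the kernel has integrability exponent strictly below $\frac Q{Q-\alpha}$ with room to spare, this is routine. I would present the argument as: (i) set up the two-term operator and its $L^s \to L^t$ bounds from Proposition \ref{pro Young ineq}; (ii) derive the exponent recursion; (iii) show the recursion reaches a nonpositive reciprocal exponent in finitely many steps using $\frac1{p-1} < 1$; (iv) conclude $g \in L^\infty$.
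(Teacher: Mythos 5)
There is a genuine gap, and it comes right at the start: you have misidentified the nonlinearity exponent in \eqref{extremal equ1 sub}. In that equation $q$ is the conjugate of the exponent $p\in\bigl(\tfrac{2Q}{Q+\alpha},2\bigr)$ of Theorem \ref{thm extremal sub diagonal}, and $g=f^{p-1}$ lies in $L^{q}$; the lemma then recycles the letter $p$ for this integrability exponent of $g$, so in the lemma's notation the equation reads $2D\,g(\xi)=\int_M g(\eta)^{p-1}K(\xi,\eta)\,dV_\theta+\int_M g(\eta)^{p-1}K(\eta,\xi)\,dV_\theta$ with $p-1>1$, i.e.\ it is \emph{superlinear} in $g$. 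You instead take the exponent to be $q-1=\tfrac1{p-1}<1$ (sublinear), which is not the equation the extremal function's power $g=f^{p-1}$ satisfies: under your reading the lemma could never be applied to that $g$ unless $p=2$, which is excluded. The sublinear reading also makes the statement essentially trivial and, tellingly, never uses the hypothesis $p<\tfrac{2Q}{Q-\alpha}$ in any essential way (your remark about exponents ``staying within the range'' of Proposition \ref{pro Young ineq} is not a computation).

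With the correct exponent the dynamics of your recursion are inverted. One gets $\tfrac1{s_{k+1}}=\tfrac{p-1}{s_k}-\tfrac\alpha Q$ (up to the usual $\varepsilon$ loss since Proposition \ref{pro Young ineq} misses the HLS endpoint), and the map $x\mapsto(p-1)x-\tfrac\alpha Q$ is an \emph{expansion} with \emph{positive} fixed point $x_*=\tfrac{\alpha}{Q(p-2)}$, not a contraction with negative fixed point. Termination of the bootstrap therefore hinges on the starting value satisfying $\tfrac1p<x_*$, which is exactly equivalent to the hypothesis $p<\tfrac{2Q}{Q-\alpha}$; once below the fixed point the iterates run away downward geometrically and become nonpositive after finitely many steps, giving $g\in L^\infty$. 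So your overall strategy (bootstrap through the $L^s\to L^t$ mapping properties of the kernel $\rho^{\alpha-Q}$, compactness not needed) is the right family of ideas — and is in the spirit of the regularity-lifting argument from Lemma 4.3 of the cited arXiv paper that the present paper invokes instead of writing a proof — but as written your exponent bookkeeping proves a different, sublinear statement, and the key step (why the iteration gains at every stage, which is where $p<\tfrac{2Q}{Q-\alpha}$ enters) is missing and must be redone.
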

The proof is a repetition line by line of the first part of Lemma 4.3 of \cite{Han-arxiv}. So, it is omitted.

\section{Sharp HLS inequalities on compact CR manifold}\label{Sec existence of criteria}

\begin{theorem}\label{thm sharp HLS}
If $\mathcal{Y}_\alpha(M)>D_H$, then $\mathcal{Y}_\alpha(M)$ is attained by some function $f\in \Gamma_\alpha(M)$.
\end{theorem}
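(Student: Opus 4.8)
The strategy is a standard subcritical approximation plus concentration-compactness dichotomy, exploiting the gap hypothesis $\mathcal{Y}_\alpha(M)>D_H$ to rule out concentration. First I would set $q_\alpha=\frac{2Q}{Q+\alpha}$ and, for $p\in(q_\alpha,2)$, invoke Theorem \ref{thm extremal sub diagonal} to obtain a nonnegative maximizer $f_p\in L^p(M)$ with $\|f_p\|_{L^p(M)}=1$ and $D_{M,p}=\iint f_p(\xi)f_p(\eta)(G_\xi^\theta(\eta))^{\frac{Q-\alpha}{Q-2}}\,dV_\theta dV_\theta$, satisfying the Euler--Lagrange equation \eqref{extremal equ1 sub} (after the substitution $g_p=f_p^{p-1}$). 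By Proposition \ref{pro regularity alpha}, $g_p\in\Gamma_\alpha(M)$, hence $f_p$ is continuous and strictly positive. The first analytic point is the limiting behaviour of $D_{M,p}$ as $p\uparrow 2$: a lower semicontinuity / test-function argument (plugging $f_p$ into the critical functional and using Hölder to compare $\|f_p\|_{L^{q_\alpha}}$ with $\|f_p\|_{L^p}$, together with the definition of $\mathcal{Y}_\alpha(M)$) should give $\liminf_{p\uparrow 2}D_{M,p}\ge \mathcal{Y}_\alpha(M)$; conversely the continuity of the kernel away from the diagonal combined with the bound $(G_\xi^\theta(\eta))^{\frac{Q-\alpha}{Q-2}}\le C\rho(\xi,\eta)^{\alpha-Q}$ gives $\limsup_{p\uparrow 2}D_{M,p}\le\mathcal{Y}_\alpha(M)$, so $D_{M,p}\to\mathcal{Y}_\alpha(M)$.

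Next I would study the concentration of the family $\{f_p\}$. Normalize by $\|f_p\|_{L^{q_\alpha}(M)}$ instead; since $|M|<\infty$ and $p<2<\frac{2Q}{Q-\alpha}$... actually it is cleaner to keep $\|f_p\|_{L^p(M)}=1$ and track the measures $\mu_p=f_p^{q_\alpha}\,dV_\theta$ (or $f_p^p\,dV_\theta$). Passing to a subsequence, $\mu_p\rightharpoonup\mu$ weakly-$*$ for some probability-type measure $\mu$. The dichotomy is: either $f_p$ stays bounded in $L^{q_\alpha+\varepsilon}(M)$ for some $\varepsilon>0$ (no concentration), in which case the compactness of the operator $A$ from Proposition \ref{pro sub HLS} lets me extract a strong limit $f\in L^{q_\alpha}(M)$ with $Af_p\to Af$, and passing to the limit in the Rayleigh quotient yields $\iint f(\xi)f(\eta)(G_\xi^\theta(\eta))^{\frac{Q-\alpha}{Q-2}}\ge\mathcal{Y}_\alpha(M)\|f\|_{L^{q_\alpha}}^2$, so $f$ is a critical maximizer, which after regularity (Proposition \ref{pro regularity alpha}, now at the critical exponent via a bootstrap) lies in $\Gamma_\alpha(M)$; or else mass concentrates at a single point $\xi_0\in M$ (a single-bubble analysis, since for genuinely subcritical $p$ no mass can be lost to the kernel's slow decay — this needs a short argument). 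In the concentration case I rescale in Folland--Stein normal coordinates centered at $\xi_0$ via $T^\delta\circ\Theta_{\xi_0}$ as in Remark 4.4 of \cite{JL1987}: the rescaled functions converge to an extremal of the sharp HLS inequality on $\mathbb{H}^n$ (Theorem \ref{thm HLS Hn}), which forces $\mathcal{Y}_\alpha(M)\le D_H$, contradicting the hypothesis.

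The main obstacle, and the step deserving the most care, is precisely the blow-up analysis ruling out concentration: one must show that if $\{f_p\}$ concentrates then the renormalized sequence, read in the normal coordinates and rescaled by the concentration scale $\delta_p\to0$, converges (strongly enough in $L^{q_\alpha}_{loc}(\mathbb{H}^n)$ and with the energy converging) to a nonzero extremizer $H$ of \eqref{sharp HLS Hn}, so that the limiting value of the Rayleigh quotient is $\le D_H$. This requires: (i) the asymptotic $(G_\xi^\theta(\eta))^{\frac{Q-\alpha}{Q-2}}\sim\rho(\xi,\eta)^{\alpha-Q}$ together with the normal-coordinate expansion to replace the curved kernel by the flat Heisenberg kernel up to $(1+O^1)$ errors that vanish after rescaling; (ii) uniform control (via the Euler--Lagrange equation and the $\Gamma_\alpha$ estimates, scaled) showing no mass escapes to infinity in the blow-up limit; (iii) quantifying that the part of the energy outside a fixed coordinate ball is negligible, exactly as in the estimates $I_1,I_2$ of the proof of $\mathcal{Y}_\alpha(M)\ge D_H$. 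Once concentration is excluded, the existence of the critical maximizer and its $\Gamma_\alpha(M)$ regularity follow from the compactness and regularity propositions already established, completing the proof.
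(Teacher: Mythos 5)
Your overall strategy --- subcritical maximizers $f_p$ from Theorem \ref{thm extremal sub diagonal}, convergence of the subcritical constants to $\mathcal{Y}_\alpha(M)$, and a blow-up/rescaling argument in Folland--Stein coordinates that plays the sharp HLS inequality on $\mathbb{H}^n$ against the gap $\mathcal{Y}_\alpha(M)>D_H$ to exclude concentration, followed by compactness and regularity --- is exactly the paper's route (Lemma \ref{lem limit sharp constant}, Proposition \ref{pro limit bound}, then convergence in $\Gamma_\alpha$). However, there is a concrete error in your first step: you take the limit at the wrong endpoint of the interval $(q_\alpha,2)$. The critical exponent for the diagonal functional is $q_\alpha=\frac{2Q}{Q+\alpha}$, not $2$, so the relevant limit is $p\to\bigl(\tfrac{2Q}{Q+\alpha}\bigr)^{+}$, as in Lemma \ref{lem limit sharp constant}. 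As $p\uparrow 2$ you move \emph{away} from the critical problem and $D_{M,p}$ tends to $D_{M,2}$, not to $\mathcal{Y}_\alpha(M)$; your claimed $\liminf_{p\uparrow 2}D_{M,p}\ge\mathcal{Y}_\alpha(M)$ fails (H\"older on the compact $M$ gives $\|f\|_{L^{q_\alpha}}\le C\|f\|_{L^p}$, which is the wrong direction for that endpoint), so the limiting procedure as written targets the wrong problem. The fix is purely directional, but as stated the step is wrong.

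Second, your no-concentration step is a measure-level dichotomy (``either $\{f_p\}$ is bounded in $L^{q_\alpha+\varepsilon}$ or all mass concentrates at a single point''), and you yourself flag that the single-bubble reduction ``needs a short argument''; that is precisely the nontrivial part and it is left unproved. The paper avoids it: Proposition \ref{pro limit bound} shows directly that $D_{M,p}\ge D_H+\epsilon$ forces a uniform bound on $\sup_M f_p$, by rescaling at a maximum point $\xi_p$ with scale $\mu_p=f_p(\xi_p)^{-(2-p)/\alpha}$; the rescaled functions satisfy $g_p\le 1$, $g_p(0)=1$, converge to a nonzero limit $g$ obeying a limiting integral inequality on $\mathbb{H}^n$, and then the sharp inequality \eqref{sharp HLS Hn} --- applied to $g$ as an admissible function, with no need for it to be an extremal --- yields $D_H+\epsilon\le(1+O^1)D_H$, a contradiction for small $r$. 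The uniform $L^\infty$ bound upgrades to a uniform $\Gamma_\alpha(M)$ bound via Proposition \ref{pro regularity alpha}, and compactness of the H\"older-type spaces gives $f_p\to f$ with $f$ attaining $\mathcal{Y}_\alpha(M)$. If you replace the $p\uparrow 2$ limit by $p\to\bigl(\tfrac{2Q}{Q+\alpha}\bigr)^{+}$ and either prove your single-point concentration claim or switch to the maximum-point rescaling, your outline coincides with the paper's proof.
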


Following, %we complete the proof of Theorem \ref{thm sharp HLS} by the method of blow-up analysis. Namely,
we will investigate the limitation of the sequence of solutions $\{f_p\}\subset\Gamma_\alpha(M)$ of \eqref{extremal equ2 sub}, and then complete the proof of Theorem \ref{thm sharp HLS} by compactness.

%\subsection{Blow-up analysis}
First, it is routine to prove
\begin{lemma}\label{lem limit sharp constant}
$D_{M,p}\rightarrow \mathcal{Y}_\alpha(M)$ as $p\rightarrow \left(\frac{2Q}{Q+\alpha}\right)^{+}$.
\end{lemma}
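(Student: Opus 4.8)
The plan is to establish the two inequalities $\limsup_{p\to q_\alpha^+} D_{M,p}\le \mathcal{Y}_\alpha(M)$ and $\liminf_{p\to q_\alpha^+} D_{M,p}\ge \mathcal{Y}_\alpha(M)$ separately, where $q_\alpha=\frac{2Q}{Q+\alpha}$. For the upper bound, I would fix any $f\in L^{q_\alpha}(M)$ with $\|f\|_{L^{q_\alpha}(M)}=1$; since $M$ has finite volume, for $p$ slightly larger than $q_\alpha$ we have $L^p(M)\hookrightarrow L^{q_\alpha}(M)$ with $\|f\|_{L^{q_\alpha}(M)}\le |M|^{\frac1{q_\alpha}-\frac1p}\|f\|_{L^p(M)}\to\|f\|_{L^p(M)}$, and conversely $\|f\|_{L^p(M)}\le \|f\|_{L^{q_\alpha}(M)}^{1-\theta}\cdot(\text{something})$ is the wrong direction, so instead I would argue directly from the definitions: given a near-maximizer $f_p$ for $D_{M,p}$ with $\|f_p\|_{L^p(M)}=1$, Hölder gives $\|f_p\|_{L^{q_\alpha}(M)}\le |M|^{\frac1{q_\alpha}-\frac1p}$, so
\[
D_{M,p}=\iint_{M\times M} f_p(\xi)f_p(\eta)\bigl(G^\theta_\xi(\eta)\bigr)^{\frac{Q-\alpha}{Q-2}}\,dV_\theta\,dV_\theta \le \mathcal{Y}_\alpha(M)\,\|f_p\|_{L^{q_\alpha}(M)}^2\le \mathcal{Y}_\alpha(M)\,|M|^{2(\frac1{q_\alpha}-\frac1p)},
\]
and the exponent tends to $0$ as $p\to q_\alpha^+$, giving $\limsup_{p\to q_\alpha^+}D_{M,p}\le \mathcal{Y}_\alpha(M)$.

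For the lower bound, I would fix $\varepsilon>0$ and choose $f\in L^{q_\alpha}(M)\setminus\{0\}$ realizing $\mathcal{Y}_\alpha(M)-\varepsilon$ in the quotient \eqref{extremal problem}; by density I may take $f\in L^\infty(M)$, hence $f\in L^p(M)$ for all $p\ge q_\alpha$. Then $\|f\|_{L^p(M)}\to\|f\|_{L^{q_\alpha}(M)}$ as $p\to q_\alpha^+$ by dominated convergence (the integrand $|f|^p$ is dominated by $\max(|f|^{q_\alpha},|f|^{q_\alpha+1})$ for $p$ in a bounded range, using $f\in L^\infty$), so
\[
D_{M,p}\ge \frac{\bigl|\iint_{M\times M} f(\xi)f(\eta)\bigl(G^\theta_\xi(\eta)\bigr)^{\frac{Q-\alpha}{Q-2}}\,dV_\theta\,dV_\theta\bigr|}{\|f\|_{L^p(M)}^2}\ \longrightarrow\ \frac{\bigl|\iint_{M\times M} f(\xi)f(\eta)\bigl(G^\theta_\xi(\eta)\bigr)^{\frac{Q-\alpha}{Q-2}}\,dV_\theta\,dV_\theta\bigr|}{\|f\|_{L^{q_\alpha}(M)}^2}\ge \mathcal{Y}_\alpha(M)-\varepsilon.
\]
Hence $\liminf_{p\to q_\alpha^+}D_{M,p}\ge \mathcal{Y}_\alpha(M)-\varepsilon$ for every $\varepsilon>0$, which forces $\liminf_{p\to q_\alpha^+}D_{M,p}\ge \mathcal{Y}_\alpha(M)$. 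Combining the two bounds yields $\lim_{p\to q_\alpha^+}D_{M,p}=\mathcal{Y}_\alpha(M)$.

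The only genuine subtlety is the density argument used in the lower bound: one needs that $L^\infty(M)\cap L^{q_\alpha}(M)$ (equivalently simple functions, or $C^\infty(M)$) is dense in $L^{q_\alpha}(M)$, and that the bilinear form $f\mapsto \iint f(\xi)f(\eta)\bigl(G^\theta_\xi(\eta)\bigr)^{\frac{Q-\alpha}{Q-2}}$ together with $f\mapsto\|f\|_{L^{q_\alpha}(M)}$ is continuous on $L^{q_\alpha}(M)$ — but this is exactly the content of the critical HLS inequality \eqref{roughly HLS diagonal}, which the paper has already invoked from \cite{Folland-Stein1974}. Everything else is Hölder's inequality on a finite-measure space plus dominated convergence, so I expect this lemma to be genuinely routine; the phrase ``it is routine to prove'' in the statement is justified. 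I would present the proof compactly, emphasizing the two matching bounds and citing \eqref{extremal problem} and the finite volume of $M$ as the only inputs.
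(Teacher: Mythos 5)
Your two-sided argument is correct: the upper bound via H\"older's inequality $\|f_p\|_{L^{q_\alpha}(M)}\le |M|^{\frac1{q_\alpha}-\frac1p}\|f_p\|_{L^p(M)}$ combined with the critical inequality \eqref{roughly HLS diagonal}, and the lower bound via truncation to $L^\infty(M)$ (justified by the continuity of the bilinear form on $L^{q_\alpha}(M)$, again from \eqref{roughly HLS diagonal}) together with $\|f\|_{L^p(M)}\to\|f\|_{L^{q_\alpha}(M)}$, is exactly the routine argument the paper has in mind when it states the lemma without proof. The only cosmetic point is that, since Theorem \ref{thm extremal sub diagonal} already gives an actual maximizer $f_p$, you can take it directly rather than a near-maximizer, but either version closes the argument.
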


\begin{proposition}\label{pro limit bound}
If $D_{M,p}\geq D_H+\epsilon$, then the positive solutions $\{f_p\}_{\frac{2Q}{Q+\alpha}<p<2}$ is uniformly bounded in $\Gamma_\alpha(M)$.
\end{proposition}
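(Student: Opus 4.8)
The plan is to prove Proposition~\ref{pro limit bound} by contradiction through a blow-up (concentration) analysis of the extremals, the punch line being that a bubble produced by blow-up would have Rayleigh quotient at most $D_H$, which is incompatible with $D_{M,p}\ge D_H+\epsilon$. By Proposition~\ref{pro regularity alpha} each $f_p$ is continuous, positive, and solves the Euler--Lagrange equation~\eqref{extremal equ2 sub} with $\|f_p\|_{L^p(M)}=1$. Suppose $\sup_{p\in(q_\alpha,2)}\|f_p\|_{L^\infty(M)}=+\infty$, where $q_\alpha=\frac{2Q}{Q+\alpha}$; I would pick $p_k$ with $m_k:=\|f_{p_k}\|_{L^\infty(M)}=f_{p_k}(\xi_k)\to+\infty$ and pass to a subsequence so that $\xi_k\to\xi_0$, $p_k\to p_*\in[q_\alpha,2]$, and $D_{M,p_k}\to c_\infty\ge D_H+\epsilon$; since $q_\alpha<Q/\alpha$, one has $p_k<Q/\alpha$ for all large $k$, which will make the tail estimates below elementary.

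In the Folland--Stein normal coordinates $\Theta_{\xi_k}$ at $\xi_k$ (Theorem~\ref{thm 4.3 of JL}, in its rescaled form) I would set $\lambda_k:=m_k^{(p_k-2)/\alpha}$ — so $\lambda_k\to0$ (as $p_k<2$) and $\lambda_k^{\alpha}m_k^{2-p_k}=1$ — and define on $\Omega_k:=\delta_{\lambda_k}^{-1}\Theta_{\xi_k}(\Omega_{\xi_k})$, which exhausts $\mathbb{H}^n$, the rescaled function
\[
  v_k(u):=m_k^{-1}\,f_{p_k}\!\bigl(\Theta_{\xi_k}^{-1}(\delta_{\lambda_k}u)\bigr),\qquad 0\le v_k\le 1=v_k(0),
\]
extended by zero outside $\Omega_k$. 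Substituting into~\eqref{extremal equ2 sub} and using the volume and distance scalings of the normal coordinates, the asymptotics $\bigl(G^\theta_\xi(\eta)\bigr)^{\frac{Q-\alpha}{Q-2}}\sim\rho(\xi,\eta)^{\alpha-Q}$ near the diagonal (so at scale $\lambda_k$ both kernels in~\eqref{extremal equ2 sub} become $|w^{-1}u|^{\alpha-Q}(1+o(1))$), the identity $\lambda_k^{\alpha}=m_k^{p_k-2}$, and the crude bound $\|f_{p_k}\|_{L^1(M)}\le|M|^{1-1/p_k}$ to absorb the contribution of the region away from $\xi_k$ (this term is $O(1)$, hence $o(1)$ after division by $2m_k\lambda_k^{\alpha}=2m_k^{\,p_k-1}\to\infty$), I would obtain that $v_k$ satisfies, on every ball $B_R\subset\mathbb{H}^n$,
\[
  D_{M,p_k}\,v_k(u)^{p_k-1}=\int_{\mathbb{H}^n}|w^{-1}u|^{\alpha-Q}\,v_k(w)\,dV_0(w)+o(1)\qquad(k\to\infty).
\]
The same change of variables gives $m_k^{E(p_k)}\!\int_{B_R}v_k^{p_k}(1+O^1)\,dV_0=\int_{B_{R\lambda_k}(\xi_k)}f_{p_k}^{p_k}\,dV_\theta\le\|f_{p_k}\|_{L^{p_k}(M)}^{p_k}=1$, where $E(p):=\frac{(Q+\alpha)p-2Q}{\alpha}$ satisfies $E(q_\alpha)=0$ and is increasing; since $p_k>q_\alpha$ we get $E(p_k)>0$, so $\int_{B_R}v_k^{p_k}(1+O^1)\,dV_0\le m_k^{-E(p_k)}\le1$, while on the fixed ball $B_R$ one has $(1+O^1)=1+O(\lambda_k R)\to1$. (Shrinking the coordinate chart if necessary also gives $\|v_k\|_{L^{p_k}(\mathbb{H}^n)}\le C$ uniformly.)

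Next I would pass to the limit. The right-hand side of the rescaled equation is controlled, uniformly in $k$, in $\Gamma_\alpha(B_R)$: split the kernel over $B_{2R}$ and $B_{2R}^c$; on $B_{2R}$ apply the Folland--Stein regularity as in Lemma~\ref{lem regularity 2} to $v_k|_{B_{2R}}\in L^\infty$, and on $B_{2R}^c$ use H\"older together with $\|v_k\|_{L^{p_k}(\mathbb{H}^n)}\le C$ and $\||w|^{\alpha-Q}\|_{L^{p_k/(p_k-1)}(B_{2R}^c)}<\infty$ (valid as $p_k<Q/\alpha$), which at the same time shows the tails of $\int|w^{-1}u|^{\alpha-Q}v_k(w)\,dV_0(w)$ are small uniformly in $k$. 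By Proposition~\ref{thm 20.1 of FS} and Arzel\`a--Ascoli with a diagonal argument, $v_k^{p_k-1}$, hence $v_k$, is locally equicontinuous, so a further subsequence converges in $C^0_{\mathrm{loc}}(\mathbb{H}^n)$ to some $v$ with $0\le v\le1$, $v(0)=1$; by dominated convergence on balls, $\int_{B_R}v^{p_*}\,dV_0=\lim_k\int_{B_R}v_k^{p_k}\,dV_0\le1$. Passing to the limit in the rescaled equation gives
\[
  c_\infty\,v(u)^{p_*-1}=\int_{\mathbb{H}^n}|w^{-1}u|^{\alpha-Q}\,v(w)\,dV_0(w)\qquad\text{on }\mathbb{H}^n.
\]
If $p_*>q_\alpha$ then $E(p_*)>0$, so $m_k^{-E(p_k)}\to0$, forcing $\int_{B_R}v^{p_*}\,dV_0=0$ for every $R$, i.e.\ $v\equiv0$, contradicting $v(0)=1$; hence $p_*=q_\alpha$, and then the bound above reads $\|v\|_{L^{q_\alpha}(\mathbb{H}^n)}\le1$. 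Multiplying the limiting equation (now with exponent $q_\alpha-1$) by $v$, integrating over $\mathbb{H}^n$, and invoking the sharp HLS inequality on $\mathbb{H}^n$ (Theorem~\ref{thm HLS Hn}) gives $c_\infty\|v\|_{L^{q_\alpha}(\mathbb{H}^n)}^{q_\alpha}=\iint|w^{-1}u|^{\alpha-Q}v(u)v(w)\,dV_0\,dV_0\le D_H\|v\|_{L^{q_\alpha}(\mathbb{H}^n)}^{2}$, whence $c_\infty\le D_H\|v\|_{L^{q_\alpha}(\mathbb{H}^n)}^{\,2-q_\alpha}\le D_H$ since $0<\|v\|_{L^{q_\alpha}(\mathbb{H}^n)}\le1$ and $2-q_\alpha>0$ — contradicting $c_\infty\ge D_H+\epsilon$. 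This proves $\sup_p\|f_p\|_{L^\infty(M)}<\infty$; the upgrade to a uniform $\Gamma_\alpha(M)$ bound is then routine, since with $\|f_p\|_{L^\infty}\le C$ the right side of~\eqref{extremal equ2 sub} is uniformly bounded in $\Gamma_\alpha(M)$ by Lemma~\ref{lem regularity 2}, $2D_{M,p}f_p^{\,p-1}(\xi)\ge\bigl(\min_{M\times M}(G^\theta_\xi(\eta))^{\frac{Q-\alpha}{Q-2}}\bigr)\|f_p\|_{L^1(M)}\ge c_1 C^{1-p}\|f_p\|_{L^p(M)}^{p}\ge c_2>0$ while $D_{M,p}$ is bounded above (Lemma~\ref{lem limit sharp constant}), so $c\le f_p\le C$ uniformly and composing the uniformly smooth map $s\mapsto s^{1/(p-1)}$ on a fixed compact subinterval of $(0,\infty)$ with $f_p^{\,p-1}\in\Gamma_\alpha(M)$ gives the claim.

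The main obstacle is the bookkeeping in the rescaling step: one must justify the limiting equation while controlling, uniformly in $k$, the $(1+O^1)$ errors of the Folland--Stein coordinates, the discrepancy between $\bigl(G^\theta_\xi(\eta)\bigr)^{\frac{Q-\alpha}{Q-2}}$ and $\rho(\xi,\eta)^{\alpha-Q}$, and the negligibility of the far-field term, together with the uniform local $\Gamma_\alpha$-bound and equicontinuity of $v_k$. The conceptual hinge — which simultaneously confines blow-up to $p\to q_\alpha$ and there yields the crucial normalization $\|v\|_{L^{q_\alpha}(\mathbb{H}^n)}\le1$ — is the sign of $E(p)=\tfrac{(Q+\alpha)p-2Q}{\alpha}$, positive precisely when $p>q_\alpha$. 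A minor additional point is the uniform lower bound $f_p\ge c>0$, needed so that the nonlinearity $t\mapsto t^{1/(p-1)}$ does not spoil the $\Gamma_\alpha$ regularity.
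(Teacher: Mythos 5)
Your proposal is essentially the paper's own argument: blow up at the maximum points with the scale $m_k^{(p_k-2)/\alpha}$ in Folland--Stein normal coordinates, control the far-field and the $(1+O^1)$ errors, pass to a limiting integral equation on $\mathbb{H}^n$ whose solution carries the normalization $\|v\|_{L^{q_\alpha}(\mathbb{H}^n)}\le 1$ (your exponent $E(p)$ is exactly the paper's $\mu_p^{\,Q-p\alpha/(2-p)}$ factor), and contradict $D_{M,p}\ge D_H+\epsilon$ via the sharp HLS inequality of Theorem \ref{thm HLS Hn}, with the same routine upgrade from a uniform $L^\infty$ bound to a uniform $\Gamma_\alpha(M)$ bound. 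The one loose end is your claim that $\lambda_k\to 0$ merely because $p_k<2$, which can fail if $p_k\to 2$ quickly relative to $m_k\to\infty$; this does not affect the comparison, since the paper itself only treats blow-up along $p\to\bigl(\tfrac{2Q}{Q+\alpha}\bigr)^{+}$, and for $p_*<2$ your argument works as stated.
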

\begin{proof}
In view of the proof of Proposition \ref{pro regularity alpha}, it is sufficient to prove that $\{f_p\}_{\frac{2Q}{Q+\alpha}<p<2}$ is uniformly bounded in $L^\infty(M)$. Following, we will prove it by contradiction. Suppose not. Then $f_p(\xi_p)\rightarrow +\infty$ as $p\rightarrow \left(\frac{2Q}{Q+\alpha}\right)^+$, where $f_p(\xi_p)=\max_{\xi\in M}f_p(\xi)$.

Let $\Theta_{\xi_p}$ be normal coordinates. We can assume that there is a fixed neighbourhood $U=B_r(0)$ of the origin in $\mathbb{H}^n$ contained in the image of $\Theta_{\xi_p}$ for all $p$, and for each $p$ we will use $\Theta_{\xi_p}$ to identify $U$ with a neighbourhood of $\xi_p$ with coordinates $(z,t)=\Theta_{\xi_p}$.

For any $u=(z,t)=\Theta_{\xi_p}(\xi)\in U$, we have
\begin{align}\label{blow-up 1}
    & 2D_{M,p} f_p(\Theta_{\xi_p}^{-1}(u))^{p-1}:= 2D_{M,P}f_p(\xi)^{p-1}\nonumber\\
    =&\int_M f_p(\eta) \bigl(G^\theta_\xi(\eta)\bigr)^{\frac{Q-\alpha}{Q-2}} dV_\theta(\eta) +\int_M f_p(\eta) \bigl(G^\theta_\eta(\xi)\bigr)^{\frac{Q-\alpha}{Q-2}} dV_\theta(\eta) \nonumber\\
    =& 2\int_{\Theta_{\xi_p}^{-1}(U)} f_p(\eta)\rho(\xi,\eta)^{\alpha-Q} dV_\theta(\eta)\nonumber\\
    &+\int_{\Theta_{\xi_p}^{-1}(U)} f_p(\eta) E(\xi,\eta) dV_\theta(\eta)+\int_{\Theta_{\xi_p}^{-1}(U)} f_p(\eta) E(\eta,\xi) dV_\theta(\eta)\nonumber\\ &+\int_{M\setminus\Theta_{\xi_p}^{-1}(U)} f_p(\eta) \bigl(G^\theta_\xi(\eta)\bigr)^{\frac{Q-\alpha}{Q-2}} dV_\theta(\eta) +\int_M f_p(\eta) \bigl(G^\theta_\eta(\xi)\bigr)^{\frac{Q-\alpha}{Q-2}} dV_\theta(\eta)\nonumber\\
    =& 2\int_{U} f_p(\Theta_{\xi_p}^{-1}(v)) \rho(u,v)^{\alpha-Q} (1+O^1) dV_0(v) +I+II\nonumber\\
    = & 2(1+O^1) \int_{U} f_p(\Theta_{\xi_p}^{-1}(v)) \rho(u,v)^{\alpha-Q} dV_0(v) +I+II,
\end{align}
where $O^1\rightarrow 0$ as $r\rightarrow 0$,
\begin{gather*}
    I:=\int_{\Theta_{\xi_p}^{-1}(U)} f_p(\eta) E(\xi,\eta) dV_\theta(\eta)+\int_{\Theta_{\xi_p}^{-1}(U)} f_p(\eta) E(\eta,\xi) dV_\theta(\eta),\\
%\intertext{and}
    II:=\int_{M\setminus\Theta_{\xi_p}^{-1}(U)} f_p(\eta) \bigl(G^\theta_\xi(\eta)\bigr)^{\frac{Q-\alpha}{Q-2}} dV_\theta(\eta) +\int_M f_p(\eta) \bigl(G^\theta_\eta(\xi)\bigr)^{\frac{Q-\alpha}{Q-2}} dV_\theta(\eta)\\
\intertext{and}
    E(\xi,\eta)= \bigl(G^\theta_\eta(\xi)\bigr)^{\frac{Q-\alpha}{Q-2}} -\rho(\xi,\eta)^{\alpha-Q}\leq C\rho(\xi,\eta)^{\alpha-2}
\end{gather*}
for any $\xi,\eta\in \Theta_{\xi_p}^{-1}(U)$.

Take $\mu_p=f_p^{-\frac{2-p}{\alpha}}(\xi_p)$ and $g_p(u)=\mu_p^{\frac{\alpha}{2-p}} f_p(\Theta_{\xi_p}^{-1}(\delta_{\mu_p}(u))),\ u\in B_{r/{\mu_p}}(0)$ with $\delta_{\mu_p}(u)=(\mu_p z,\mu_p^2 t)$. Then, $g_p(u)$ satisfies %$\{g_p(u)\}$ is bounded in $\Gamma^\alpha(B_{r/{\mu_p}}(0))$ and
\begin{equation}\label{blow-up 2}
    2D_{M,p} g_p(u)^{p-1}= 2(1+O^1)\int_{B_{r/{\mu_p}}(0)} g_p(v) \rho(u,v)^{\alpha-Q} dV_0(v)+I'+II',
\end{equation}
and $g_p(0)=1,\ g_p(u)\in (0,1]$, where
\begin{gather*}
    I'=\mu_p^{\frac{\alpha(p-1)}{2-p}}\times I \quad\text{and}\quad II'=\mu_p^{\frac{\alpha(p-1)}{2-p}}\times II \quad\text{with}\quad \xi=\Theta_{\xi_p}^{-1}(\delta_{\mu_p}(u)).
\end{gather*}
Since
\begin{align*}
  I'\leq &\mu_p^{\frac{\alpha(p-1)}{2-p}} \cdot C\int_U f_p(\Theta_{\xi_p}^{-1}(v))\rho(\delta_{\mu_p}(u),v)^{\alpha-2} dV_0(v)\\
  =& C\mu_p^{Q-2}\int_{B_{r/\mu_p}(0)} g_p(v) \rho(u,v)^{\alpha-2} dV_0(v)\\
  \leq & C\mu_p^{Q-2} \|g_p\|_{L^p(\mathbb{H}^n)}
  =  C\mu_p^{Q-2}\mu^{\frac{\alpha}{2-p}-\frac Qp}\|f_p\|_{L^p}
\end{align*}
and $\frac{\alpha}{2-p}-\frac Qp>0$, we have $I'\rightarrow 0$ as $p\rightarrow\left(\frac{2Q}{Q+\alpha}\right)^+$. Now, we assume further that $\xi\in \Theta_{\xi_p}^{-1}(B_{r/2}(0))$. Then $\bigl(G^\theta_\xi(\eta)\bigr)^{\frac{Q-\alpha}{Q-2}}$ is uniformly bounded on $M\setminus\Theta_{\xi_p}^{-1}(U)$. Hence, on $\Theta_{\xi_p}^{-1}(B_{r/2}(0))$, $II'\rightrightarrows 0$ as $p\rightarrow\left(\frac{2Q}{Q+\alpha}\right)^+$.

On the other hand, for large $R>0$ and $u\in B_{R/2}(0)$, we have
\begin{align}\label{blow-up 3}
    &\int_{B_{r/{\mu_p}}(0)\setminus B_R(0)} g_p(v) \rho(u,v)^{\alpha-Q} dV_0(v)\nonumber\\
    =&\int_{U\setminus B_{\mu_pR}(0)} \mu_p^{\frac{\alpha}{2-p}} f_p(\Theta_{\xi_p}^{-1}(v)) \rho(u,\delta_{\mu_p^{-1}}(v))^{\alpha-Q} \mu_p^{-Q} dV_0(v)\nonumber\\
    =&\mu_p^{\frac{\alpha}{2-p}-\alpha} \int_{U\setminus B_{\mu_pR}(0)}  f_p(\Theta_{\xi_p}^{-1}(v)) \rho(\delta_{\mu_p}(u),v)^{\alpha-Q} dV_0(v)\nonumber\\
    \leq & C\mu_p^{\frac{\alpha}{2-p}-\alpha} \|f_p\|_{L^p(M)}\left(\int_{U\setminus B_{\mu_pR}(0)} \rho(\delta_{\mu_p}(u),v)^{ \frac{(\alpha-Q) p}{p-1}} dV_0(v)\right)^{\frac{p-1}p}\nonumber\\
    \leq &C \mu_p^{\frac{\alpha}{2-p}-\alpha} (\mu_p R)^{\alpha-\frac Qp}\rightarrow 0,
\end{align}
as $p\rightarrow \left(\frac{2Q}{Q+\alpha}\right)^+$ and $R\rightarrow+\infty$.

So, as $p\rightarrow\left(\frac{2Q}{Q+\alpha}\right)^+$, we have $B_{r/{\mu_p}}(0)\rightarrow \mathbb{H}^n$ and $g_p(u)\rightarrow g(u)$ pointwise in $\mathbb{H}^n$, where $g(u)$ satisfies
\begin{equation}\label{blow-up 4}
    2D_{M,p} g(u)^{\frac{Q-\alpha}{Q+\alpha}}\leq  2(1+O^1)\int_{\mathbb{H}^n} g(v) \rho(u,v)^{\alpha-Q} dV_0(v),\quad g(0)=1.
\end{equation}
Also, a direct computation yields
\begin{align*}
    1> & \int_{\Theta_{\xi_p}^{-1}(U)} f_p^p(\eta) dV_\theta(\eta) =\mu_p^{Q-\frac{p\alpha}{2-p}} (1+O^1) \int_{B_{r/{\mu_p}}(0)} g_p^p(u) dV_0(u)\\
    \geq &(1+O^1) \int_{B_{r/{\mu_p}}(0)} g_p^p(u) dV_0(u).
\end{align*}
Thus, $\int_{B_{r/{\mu_p}}(0)} g_p^p(u) dV_0(u)\leq 1$ for sufficient small $r$.
Combining this with \eqref{blow-up 4}, we have
\begin{align*}
    D_H+\epsilon\leq & D_{M,p}\leq \frac{(1+O^1)\int_{\mathbb{H}^n}\int_{\mathbb{H}^n} \frac{g(u)g(v)}{\rho(u,v)^{Q-\alpha}} dV_0(v)\ dV_0(u)} {\int_{\mathbb{H}^n} g(u)^{2Q/(Q+\alpha)} dV_0(u)}\\
    \leq & \frac{(1+O^1)\int_{\mathbb{H}^n}\int_{\mathbb{H}^n} \frac{g(u)g(v)}{\rho(u,v)^{Q-\alpha}} dV_0(v)\ dV_0(u)} {\|g(z,t)\|_{L^{2Q/(Q+\alpha)} (\mathbb{H}^n)}^2} \leq (1+O^1)D_H,
\end{align*}
which deduces a contradiction for sufficient small $r$.
\end{proof}
\medskip

{\bf Proof of Theorem \ref{thm sharp HLS}.}\ Let $f_p$ be solutions to \eqref{extremal equ2 sub} for $p\in(\frac{2Q}{Q+\alpha},2)$, which are also the maximal functions to $D_{m,p}$. Then $\|f_p\|_{\Gamma^\alpha(M)}\leq C$ by Proposition \ref{pro limit bound}, which deduce that there exists a subsequence of $\{f_p\}$ (still denoted by $\{f_p\}$)  and a function $f\in\Gamma^\alpha(M)$ such that $f_p\rightarrow f$ as $p\rightarrow\left(\frac{2Q}{Q+\alpha}\right)^+$ in $\Gamma^\alpha(M)$. Hence, $D_M$ can be attained by $f$.

%\subsection{Examples???}\
%
%we will consider a special CR manifold $M=\partial\{\epsilon<|z|<1\}$ and test by function $f(\xi)\equiv 1$.{\bf May be we need to compute the kernel function, namely, $\rho(\xi,\eta)$.}

\subsection{Discussion of the condition of strict inequality}%\label{Sec discussion}

By now, we want to know whether the strict inequality of Theorem \ref{thm sharp HLS} holds for every CR compact manifold $M$ with $\mathcal{Y}(M)>0$, which is not CR conformally to complex sphere $(\mathbb{S}^{2n+1},\ \theta_S)$. Following, we give a partial answer.
\begin{proposition}
For a given compact locally CR conformal flat manifold $M$ with $\mathcal{Y}(M)>0$, if there exists some point $\xi_0\in M$ such that $A(\xi_0) =w(\xi_0,\xi_0)>0$ (see the definition of $A(\xi)$ in \eqref{Green}), then $\mathcal{Y}_\alpha(M)>\mathcal{Y}_\alpha(\mathbb{S}^{2n+1})$.
\end{proposition}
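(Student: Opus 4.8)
The plan is to construct an explicit test function supported near $\xi_0$, using the Folland--Stein normal coordinates at $\xi_0$, that captures the extra positive contribution coming from the regular part $A(\xi_0)>0$ of the Green function. In the proof of the Proposition giving $\mathcal{Y}_\alpha(M)\geq D_H$ we already compared $\bigl(G_\xi^\theta(\eta)\bigr)^{\frac{Q-\alpha}{Q-2}}$ with $\rho(\xi,\eta)^{\alpha-Q}$ up to a multiplicative $(1\pm\delta)$ factor; here the point is to refine this to an \emph{additive} expansion. First I would use the hypothesis of local CR conformal flatness together with \eqref{Green}: on a neighbourhood of the diagonal we have $G_\xi(\eta)=\rho^{-2n}+A(\xi)+O(\rho)$ with $\rho(\xi,\eta)=\rho(\eta,\xi)$, so raising to the power $\frac{Q-\alpha}{Q-2}=\frac{Q-\alpha}{2n}$ gives, near $\xi_0$,
\begin{equation*}
    \bigl(G_\xi^\theta(\eta)\bigr)^{\frac{Q-\alpha}{Q-2}} =\rho(\xi,\eta)^{\alpha-Q}\Bigl(1+A(\xi)\rho(\xi,\eta)^{2n}+O(\rho^{2n+1})\Bigr)^{\frac{Q-\alpha}{2n}} =\rho(\xi,\eta)^{\alpha-Q}+\tfrac{Q-\alpha}{2n}A(\xi)\rho(\xi,\eta)^{\alpha}+O(\rho^{\alpha+1}).
\end{equation*}

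Next I would take, in the coordinates $u=\Theta_{\xi_0}(\eta)$, the rescaled bubble $f_\epsilon$ from \eqref{fe}, cut off to a small cylinder $\Sigma_R$ as in the earlier proof, and transplant it to $M$ to get $\Phi_\epsilon$. Plugging into the quotient defining $\mathcal{Y}_\alpha(M)$, the leading term reproduces $D_H\|\Phi_\epsilon\|_{L^{q_\alpha}}^2$ (up to the $(1+O^1)$ volume distortions, which tend to $1$ as $R\to 0$), and the error terms $I_1,I_2$ from the truncation are $O((R/\epsilon)^{-Q})$ exactly as before. The new ingredient is the second term: its contribution is
\begin{equation*}
    \tfrac{Q-\alpha}{2n}A(\xi_0)(1+o(1))\iint f_\epsilon(u)f_\epsilon(v)\,\rho(u,v)^{\alpha}\,dV_0(u)\,dV_0(v),
\end{equation*}
and since $f_\epsilon\geq 0$, $\rho^\alpha>0$ and $A(\xi_0)>0$, this is strictly positive. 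A scaling computation (using $f_\epsilon(u)=\epsilon^{-(Q+\alpha)/2}H(\delta_{\epsilon^{-1}}u)$, so $\iint f_\epsilon f_\epsilon\rho^\alpha$ scales like $\epsilon^{2\alpha}$ relative to $\iint f_\epsilon f_\epsilon\rho^{\alpha-Q}\sim\epsilon^0$) shows this extra term is of order $\epsilon^{2\alpha}$ while $\|\Phi_\epsilon\|_{L^{q_\alpha}}^2$ is of order $1$; meanwhile the truncation errors are $O((R/\epsilon)^{-Q})$. Choosing $R=R(\epsilon)$ with $\epsilon\ll R$ but $R/\epsilon$ large enough that $(R/\epsilon)^{-Q}=o(\epsilon^{2\alpha})$ — for instance $R=\epsilon^{1/2}$ — makes the gain strictly dominate the loss, yielding $\mathcal{Y}_\alpha(M)\geq D_H+c\,\epsilon^{2\alpha}>D_H=\mathcal{Y}_\alpha(\mathbb{S}^{2n+1})$ for $\epsilon$ small.

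The main obstacle, and the step requiring the most care, is the bookkeeping of the several small parameters $\delta$ (the $(1\pm\delta)$ kernel bound), $R$ (the cutoff radius), $\epsilon$ (the bubble concentration scale), and the $O^1$ volume distortion, which in the earlier proof were sent to zero \emph{successively}. Here I cannot afford to first send $R\to 0$: the positive $A(\xi_0)$-term must survive, so I must instead tie $R$ to $\epsilon$ and track the precise powers of $\epsilon$ in every term — the main term ($\epsilon^0$), the curvature gain ($\epsilon^{2\alpha}$), the truncation losses ($(R/\epsilon)^{-Q}$ and $(R/\epsilon)^{-Q-\alpha}$), the contribution from $\rho\neq\rho_{\mathrm{Heisenberg}}$ away from the diagonal, and the $O(\rho^{\alpha+1})$ remainder in the kernel expansion (which produces a term of order $\epsilon^{2\alpha+1}$, negligible against $\epsilon^{2\alpha}$). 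One also needs that on a locally conformally flat $M$ the identity $\rho(\xi,\eta)=\rho(\eta,\xi)$ holds on the relevant neighbourhood — which is exactly what is invoked before the Proposition — so that the kernel is symmetric and \eqref{extremal problem} genuinely has the stated Euler--Lagrange structure. Once the powers are lined up, the conclusion is immediate.
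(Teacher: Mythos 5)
Your overall strategy --- a truncated bubble centred at $\xi_0$ whose energy picks up an extra positive contribution from the constant term in the Green function --- is the same one the paper uses (it defers the computation to Proposition 2.9 of Han--Zhu), but two of your key computations are wrong and the bookkeeping you propose does not close. First, the exponent of the gain term: since $Q-2=2n$, multiplying out $\rho^{\alpha-Q}\bigl(1+A(\xi)\rho^{2n}+O(\rho^{2n+1})\bigr)^{\frac{Q-\alpha}{2n}}$ gives
\begin{equation*}
\bigl(G_\xi^\theta(\eta)\bigr)^{\frac{Q-\alpha}{Q-2}}=\rho^{\alpha-Q}+\tfrac{Q-\alpha}{2n}A(\xi)\,\rho^{\alpha-2}+O(\rho^{\alpha-1}),
\end{equation*}
not $\rho^{\alpha-Q}+\tfrac{Q-\alpha}{2n}A(\xi)\rho^{\alpha}+O(\rho^{\alpha+1})$; this matches the paper's stated lower bound $\rho^{\alpha-Q}+C\rho^{\alpha-2}$. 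Second, the scaling of the gain: $\iint f_\epsilon(u) f_\epsilon(v)\rho(u,v)^{\beta}\,dV_0\,dV_0$ scales like $\epsilon^{\,Q-\alpha+\beta}$, so the correct gain (with $\beta=\alpha-2$) is of order $\epsilon^{\,Q-2}=\epsilon^{2n}$, while the quantity you announce, $\epsilon^{2\alpha}$, is not produced by either exponent (your $\rho^{\alpha}$ would give $\epsilon^{Q}$, up to a logarithm).

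These corrections are fatal to your parameter choices. With the true gain $\epsilon^{Q-2}$ the truncation losses $O\bigl((R/\epsilon)^{-Q}\bigr)$ are harmless even for $R$ fixed, so tying $R=\epsilon^{1/2}$ is unnecessary; the term you cannot beat this way is the distortion you dismiss as ``$(1+O^1)$ tending to $1$'': treated as in the earlier proposition it is a relative error of order $R$ on the leading term, and even estimated sharply (using that $O^1$ vanishes at the centre) it is still $O(\epsilon)$; both dominate $\epsilon^{2n}$ for every $n\geq 1$ and carry no favourable sign, so the strict inequality is lost. The hypothesis of local CR conformal flatness is precisely what removes this obstruction, and you invoke it only for the symmetry of $\rho$ and the expansion of $G$: since the functional $\mathcal{Y}_\alpha$ is invariant under $\tilde\theta=\phi^{4/(Q-2)}\theta$ (by \eqref{Trans of Green function} and \eqref{conformal operator property}), one may choose the contact form in the conformal class so that a neighbourhood of $\xi_0$ is pseudohermitian equivalent to a neighbourhood of $0$ in $\mathbb{H}^n$; then $dV_\theta$ is exactly the Haar measure and $\rho$ exactly the Heisenberg distance there, no $O^1$ terms appear, and the comparison reduces to a gain $c\,\epsilon^{Q-2}$ against a truncation error $O(\epsilon^{Q})$ and a remainder $O(\epsilon^{Q-1})$, which is the computation of Han--Zhu, Proposition 2.9, that the paper cites. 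Finally, note that you need $A(\xi)\geq A_0>0$ on a whole neighbourhood of $\xi_0$ (from $w\in C^1(M\times M)$), since $\xi$ varies in the double integral; this continuity step, which you use only implicitly, is the first line of the paper's proof.
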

\begin{proof}
By the assumption of $w(\xi,\eta)\in C^1(M\times M)$ and $A(\xi_0)>0$, there exists a small neighbourhood $V_{\xi_0}\in M$ and some positive constant $A_0$ such that $A(\xi)>A_0>0$. Moreover, on $V_{\xi_0}\times V_{\xi_0}$, there exists some positive constant $C$ such that
    $$(G^\theta_\xi(\eta))^{\frac{Q-\alpha}{Q-2}}\geq \rho(\xi,\eta)^{\alpha-Q}+C\rho^{\alpha-2}.$$
Then by a similar argument with \textbf{Proposition 2.9} of \cite{Han-Zhu2016}, we can complete the proof.
\end{proof}

\begin{remark}
In \cite{CMY2017}, Cheng, Malchiodi and Yang have proved a class of positive mass theorem in three dimensional CR geometry. Namely, under the assumptions of $\mathcal{Y}(M)>0$ and the non-negativity of the CR Paneitz operator, they proved that, if $M$ is not CR equivalent to $S^3$ (endowed with its standard CR structure), then $A(\xi)>0$. Therefore, our main result holds for three dimensional compact CR manifold without boundary. While for other cases, it is still an open question as far as we know.
\end{remark}

\begin{appendix}

\section{CR conformality}\label{Sec CR conformality}

For completeness, we give the CR conformality of operator $I_{M,\theta,\alpha}$ defined in \eqref{conformal operator}.

\begin{proposition}
Let $\tilde{\theta}=\phi^{\frac 4{Q-2}}\theta$. Then
\begin{equation}\label{Trans of Green function}
    G_\xi^{\tilde{\theta}}(\eta) =\phi^{-1}(\xi)\phi^{-1}(\eta)G_\xi^\theta(\eta).
\end{equation}
\end{proposition}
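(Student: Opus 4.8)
The plan is to verify \eqref{Trans of Green function} directly from the defining property of the Green function together with the conformal covariance \eqref{conformal Laplacian} of the CR conformal Laplacian. Recall that $G_\xi^\theta$ is characterized by $\mathcal{L}_\theta G_\xi^\theta = \delta_\xi$ in the distributional sense (with the normalization built into \eqref{Green}), equivalently by the reproducing formula $u(\xi) = \int_M G_\xi^\theta(\eta)\,\mathcal{L}_\theta u(\eta)\,dV_\theta(\eta)$ for suitable test functions $u$. So the strategy is: start from the candidate $\phi^{-1}(\xi)\phi^{-1}(\eta)G_\xi^\theta(\eta)$, apply $\mathcal{L}_{\tilde\theta}$ in the $\eta$ variable, and show it equals the Dirac mass at $\xi$ with respect to $dV_{\tilde\theta}$.

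The key steps, in order: First I would record the two transformation facts I need — the volume form relation $dV_{\tilde\theta} = \phi^{\frac{2Q}{Q-2}}\,dV_\theta$ (since $\tilde\theta = \phi^{\frac4{Q-2}}\theta$ and $dV_\theta = \theta\wedge d\theta^n$ scales by the $(n{+}1)$-st power of the conformal factor, and $\tfrac{4}{Q-2}(n+1) = \tfrac{2Q}{Q-2}$), and the operator identity \eqref{conformal Laplacian}, namely $\mathcal{L}_{\tilde\theta}(\phi^{-1}u) = \phi^{-\frac{Q+2}{Q-2}}\mathcal{L}_\theta u$. Second, I would take an arbitrary test function $v$ on $M$, set $u = \phi v$, and compute, using the reproducing property of $G^\theta$ against $\mathcal{L}_\theta u$:
\begin{align*}
\int_M \phi^{-1}(\xi)\phi^{-1}(\eta) G_\xi^\theta(\eta)\,\mathcal{L}_{\tilde\theta} v(\eta)\,dV_{\tilde\theta}(\eta)
&= \phi^{-1}(\xi)\int_M G_\xi^\theta(\eta)\,\phi^{-1}(\eta)\,\mathcal{L}_{\tilde\theta}v(\eta)\,\phi^{\frac{2Q}{Q-2}}(\eta)\,dV_\theta(\eta).
\end{align*}
Third, I would rewrite $\mathcal{L}_{\tilde\theta}v = \mathcal{L}_{\tilde\theta}(\phi^{-1}u) = \phi^{-\frac{Q+2}{Q-2}}\mathcal{L}_\theta u$ by \eqref{conformal Laplacian}, and check that $\phi^{-1}\cdot\phi^{-\frac{Q+2}{Q-2}}\cdot\phi^{\frac{2Q}{Q-2}} = \phi^{\frac{-(Q-2) - (Q+2) + 2Q}{Q-2}} = \phi^0 = 1$, so the integral collapses to $\phi^{-1}(\xi)\int_M G_\xi^\theta(\eta)\,\mathcal{L}_\theta u(\eta)\,dV_\theta(\eta) = \phi^{-1}(\xi)u(\xi) = \phi^{-1}(\xi)\phi(\xi)v(\xi) = v(\xi)$. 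This shows $\phi^{-1}(\xi)\phi^{-1}(\eta)G_\xi^\theta(\eta)$ reproduces test functions against $\mathcal{L}_{\tilde\theta}$ relative to $dV_{\tilde\theta}$, hence equals $G_\xi^{\tilde\theta}(\eta)$ by uniqueness of the Green function (guaranteed since $\mathcal{Y}(M)>0$ implies $\mathcal{L}_{\tilde\theta}$ is invertible, as noted in the introduction).

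The main obstacle is essentially bookkeeping rather than conceptual: one must be careful about which variable the operator acts in, about the symmetry $G_\xi^\theta(\eta)=G_\eta^\theta(\xi)$ (or avoid relying on it), and above all about tracking the powers of $\phi$ so that the three exponents $-1$, $-\frac{Q+2}{Q-2}$, and $\frac{2Q}{Q-2}$ cancel exactly — which they do precisely because $\frac4{Q-2}$ is the conformal weight making $\mathcal{L}_\theta$ covariant. A secondary technical point is justifying that the distributional computation is legitimate near the singularity $\eta=\xi$; this is handled by the standard parametrix description of $G_\xi^\theta$ recalled after Theorem \ref{thm main}, so I would simply invoke that the formal manipulation above is rigorous because both sides are genuine fundamental solutions with the same leading singularity $\rho^{2-Q}$-type behavior, and their difference solves $\mathcal{L}_{\tilde\theta}(\cdot)=0$ hence vanishes.
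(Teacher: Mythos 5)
Your proposal is correct and is essentially the paper's own argument: both proofs verify that the candidate kernel $\phi^{-1}(\xi)\phi^{-1}(\eta)G_\xi^\theta(\eta)$ inverts $\mathcal{L}_{\tilde\theta}$ by combining the conformal covariance \eqref{conformal Laplacian} with the volume relation $dV_{\tilde\theta}=\phi^{\frac{2Q}{Q-2}}dV_\theta$, the exponents $-1$, $-\frac{Q+2}{Q-2}$, $\frac{2Q}{Q-2}$ cancelling exactly as in your computation, and then conclude by the (uniqueness) property of the Green function. The only cosmetic difference is that you pair the kernel in the $\eta$-variable against $\mathcal{L}_{\tilde\theta}v$ and use the reproducing formula, whereas the paper applies $\mathcal{L}_{\tilde\theta}$ in the $\xi$-variable to the integral $\int_M \phi^{-1}(\xi)\phi^{-1}(\eta)G_\xi^\theta(\eta)u(\eta)\,dV_{\tilde\theta}$ and shows it returns $u(\xi)$; these are dual formulations of the same step.
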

\begin{proof}
It is sufficient to prove that, for any $u\in C^2(M)$,
\begin{equation}\label{Green 1}
    \int_M G_\xi^{\tilde{\theta}}(\eta) u(\eta)\ dV_{\tilde{\theta}} =\int_M \phi^{-1}(\xi) \phi^{-1}(\eta)G_\xi^\theta(\eta) u(\eta)\ dV_{\tilde{\theta}}.
\end{equation}
In fact, by \eqref{conformal Laplacian}, we have
\begin{align*}
    &\mathcal{L}_{\tilde{\theta}} \Bigl[ \int_M \phi^{-1}(\xi) \phi^{-1}(\eta)G_\xi^\theta(\eta) u(\eta)\ dV_{\tilde{\theta}} \Bigr]\\
    =&\phi^{-\frac{Q+2}{Q-2}}(\xi) \mathcal{L}_\theta \Bigl[ \int_M \phi^{-1}(\eta)G_\xi^\theta(\eta) u(\eta)\ dV_{\tilde{\theta}} \Bigr]\\
    =& \phi^{-\frac{Q+2}{Q-2}}(\xi) \mathcal{L}_\theta \Bigl[ \int_M \phi^{-1}(\eta)G_\xi^\theta(\eta) u(\eta) \phi(\eta)^{\frac{2Q}{Q-2}}(\eta)\ dV_\theta \Bigr]=u(\xi).
\end{align*}
So, we complete the proof by the property of Green function.
\end{proof}

%As the discussion for the Riemannian manifold in \cite{Zhu2016}, define the operator
%\begin{equation}\label{conformal operator}
%    I_{N,\theta,\alpha}(u)=\int_N [G_\xi^\theta(\eta)]^{\frac{Q-\alpha}{Q-2}} u(\eta) \theta \wedge d\theta^n.
%\end{equation}

\begin{proposition}
For positive constant $\alpha\neq Q$, we have
\begin{equation}\label{conformal operator property 1}
    I_{M,\tilde{\theta},\alpha}(u) =\phi^{-\frac{Q-\alpha}{Q-2}} I_{M,\theta,\alpha}(\phi^{\frac{Q+\alpha}{Q-2}}u).
\end{equation}
\end{proposition}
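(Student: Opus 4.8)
The plan is to compute $I_{M,\tilde\theta,\alpha}(u)$ directly from the definition \eqref{conformal operator}, substituting the conformal transformation law for the Green function from \eqref{Trans of Green function} and the change-of-volume formula $dV_{\tilde\theta}=\phi^{\frac{2Q}{Q-2}}\,dV_\theta$. The latter follows from $\tilde\theta=\phi^{\frac{4}{Q-2}}\theta$ and $dV_\theta=\theta\wedge d\theta^n$, since $\tilde\theta\wedge d\tilde\theta^n$ scales by the $(n+1)$-st power of $\phi^{\frac{4}{Q-2}}$, i.e.\ by $\phi^{\frac{4(n+1)}{Q-2}}=\phi^{\frac{2Q}{Q-2}}$ using $Q=2n+2$.

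First I would write, by definition,
\[
    I_{M,\tilde\theta,\alpha}(u)(\xi)=\int_M \bigl[G_\xi^{\tilde\theta}(\eta)\bigr]^{\frac{Q-\alpha}{Q-2}} u(\eta)\,dV_{\tilde\theta}(\eta).
\]
Next, substitute $G_\xi^{\tilde\theta}(\eta)=\phi^{-1}(\xi)\phi^{-1}(\eta)G_\xi^\theta(\eta)$ from \eqref{Trans of Green function}, so that
$\bigl[G_\xi^{\tilde\theta}(\eta)\bigr]^{\frac{Q-\alpha}{Q-2}}=\phi^{-\frac{Q-\alpha}{Q-2}}(\xi)\,\phi^{-\frac{Q-\alpha}{Q-2}}(\eta)\,\bigl[G_\xi^\theta(\eta)\bigr]^{\frac{Q-\alpha}{Q-2}}$, and replace $dV_{\tilde\theta}(\eta)$ by $\phi^{\frac{2Q}{Q-2}}(\eta)\,dV_\theta(\eta)$. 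Then pull the $\xi$-dependent factor $\phi^{-\frac{Q-\alpha}{Q-2}}(\xi)$ outside the integral and collect the powers of $\phi(\eta)$ inside: the exponent is
\[
    -\frac{Q-\alpha}{Q-2}+\frac{2Q}{Q-2}=\frac{2Q-(Q-\alpha)}{Q-2}=\frac{Q+\alpha}{Q-2}.
\]
Hence the integrand becomes $\bigl[G_\xi^\theta(\eta)\bigr]^{\frac{Q-\alpha}{Q-2}}\bigl(\phi^{\frac{Q+\alpha}{Q-2}}u\bigr)(\eta)\,dV_\theta(\eta)$, which is precisely $I_{M,\theta,\alpha}(\phi^{\frac{Q+\alpha}{Q-2}}u)(\xi)$, giving \eqref{conformal operator property 1}.

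This argument is entirely formal once \eqref{Trans of Green function} and the volume-scaling identity are in hand, so there is no real obstacle; the only point requiring a little care is the bookkeeping of exponents, in particular verifying that $\frac{4(n+1)}{Q-2}=\frac{2Q}{Q-2}$ for $Q=2n+2$, and that the two reciprocal factors $\phi^{-1}(\xi)$ and $\phi^{-1}(\eta)$ from the Green function each get raised to the power $\frac{Q-\alpha}{Q-2}$ so that the $\xi$-factor matches the claimed prefactor $\phi^{-\frac{Q-\alpha}{Q-2}}$ and the $\eta$-factor combines correctly with the Jacobian. One should also note that the manipulation is valid for all $\alpha\neq Q$ and requires no positivity or integrability hypothesis beyond what is already implicit in the definition of $I_{M,\theta,\alpha}$, since it is just a pointwise substitution under the integral sign.
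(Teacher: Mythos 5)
Your proposal is correct and follows essentially the same route as the paper: substitute the transformation law \eqref{Trans of Green function} for the Green function together with the volume scaling $dV_{\tilde\theta}=\phi^{\frac{2Q}{Q-2}}dV_\theta$, and combine the exponents $-\frac{Q-\alpha}{Q-2}+\frac{2Q}{Q-2}=\frac{Q+\alpha}{Q-2}$ to obtain \eqref{conformal operator property 1}. The paper's own proof is just this two-line computation, so there is nothing further to add.
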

\begin{proof}
\begin{align*}
    I_{M,\tilde{\theta},\alpha}(u) &=\int_M [G_\xi^{\tilde{\theta}}(\eta)]^{\frac{Q-\alpha}{Q-2}} u(\eta)\ dV_{\tilde{\theta}}\\
    &=\phi^{-\frac{Q-\alpha}{Q-2}}\int_M [G_\xi^{\theta}(\eta)]^{\frac{Q-\alpha}{Q-2}} \phi(\eta)^{\frac{Q+\alpha}{Q-2}} u(\eta) dV_\theta.
\end{align*}
\end{proof}

\begin{remark}
When $\alpha=2$, \eqref{conformal operator property} is
\begin{equation}\label{conformal 1}
    I_{N,\tilde{\theta},2}(u)\phi(\xi)=\int_N G_\xi^{\theta}(\eta) u(\eta) \phi(\eta)^{\frac{Q+2}{Q-2}} \theta\wedge d\theta^n.
\end{equation}
For given nonnegative function $u$ and taking $\phi(\xi)=\frac{\Phi(\xi)}{I_{N,\tilde{\theta},2}(u)}$, then \eqref{conformal 1} becomes
    $$\Phi(\xi)=\int_N G_\xi^{\theta}(\eta) \tilde{R}(\eta) \Phi(\eta)^{\frac{Q+2}{Q-2}} \theta\wedge d\theta^n \text{\quad with\quad} \tilde{R}(\eta) =u(\eta) I_{N,\tilde{\theta},2}(u)(\eta)^{-\frac{Q+2}{Q-2}},$$
which is the integral type curvature equation \eqref{CR Yamabe integral equ}. Particular, case $u\equiv constant$ is the CR Yamabe problem.

Similarly, \eqref{conformal operator property 1} is corresponding to a class of general integral equation
    $$\Phi(\xi)^{\frac{Q-\alpha}{Q-2}}=\int_N G_\xi^{\theta}(\eta) \tilde{R}(\eta) \Phi(\eta)^{\frac{Q+\alpha}{Q-2}} \theta\wedge d\theta^n.$$
\end{remark}

%Therefore, for given function $R$ and positive constant $\alpha\neq n$, we introduce the following generalizing curvature problem
%\begin{equation}\label{curvature alpha}
%    \phi(\xi)^{\frac{Q-\alpha}{Q-2}}=\int_N [G_\xi^\theta(\eta)]^{\frac{Q-\alpha}{Q-2}} R(\eta)\phi(\eta)^{\frac{Q+\alpha}{Q-2}} \theta\wedge d\theta^n.
%\end{equation}
%For the case $R\equiv constant$, we it \textbf{generalizing CR Yamabe problem}.
\end{appendix}

\vskip 1cm
\noindent {\bf Acknowledgements}\\
\noindent% The author would like to thank the referee for his/her
%careful reading of the manuscript and many good suggestions.
The author would like to thank Professor Meijun Zhu for some helpful discussions and his valuable comments. The project is supported by  the
National Natural Science Foundation of China (Grant No. 11571268,\ 11201443) and Natural Science Foundation of Zhejiang Province (Grant No.  LY18A010013).
%% bibliography--------------------------------------------------------------------
%\begin{center}
\small

\end{document}